





\RequirePackage{tikz}
\documentclass[pdflatex,sn-mathphys]{sn-jnl}

\usepackage{subfig}
\usepackage{tikz}
\usetikzlibrary{decorations.markings}
\usetikzlibrary{calc,positioning,decorations.pathmorphing,decorations.pathreplacing,fixedpointarithmetic}
\usepackage{figs/figstyle}
\usepackage{program}
\usepackage{verbatim}
\usepackage{enumerate}
\usepackage{enumitem}



\jyear{2021}%

\newtheorem{theorem}{Theorem}
%

%
%

%

\newtheorem{lemma}[theorem]{Lemma}
\newtheorem{conjecture}[theorem]{Conjecture}
\newtheorem{claim}{Claim}

\newcommand{\D}{{\mathfrak{D}}}
\newcommand{\sB}{{\mathfrak{B}}}

\newcommand{\sP}{\mathcal{P}}

\newcommand{\PDD}{{principle of directional duality}}
\newcommand{\digon}{\leftrightarrow}
\newcommand{\lto}{\leftarrow}

\raggedbottom

\begin{document}

\title[$3$-anti-circulant digraphs are $\alpha$-diperfect and BE-diperfect]{$3$-anti-circulant digraphs are $\alpha$-diperfect and BE-diperfect}


\author*[1]{\fnm{Lucas Ismaily Bezerra} \sur{Freitas}}\email{ismailybf@ic.unicamp.br}

\author[1]{\fnm{Orlando} \sur{Lee}}\email{lee@ic.unicamp.br}
\equalcont{This author was supported by CNPq Proc. 303766/2018-2, CNPq Proc 425340/2016-3 and FAPESP Proc. 2015/11937-9. ORCID: 0000-0003-4462-3325.}

\affil[1]{\orgdiv{Institute of Computing}, \orgname{State University of Campinas}, \orgaddress{\street{Albert Einstein}, \city{Campinas}, \postcode{13083-852}, \state{São Paulo}, \country{Brazil}}}


\abstract{
Let $D$ be a digraph. A subset $S$ of $V(D)$ is a \emph{stable set} if every pair of vertices in $S$ is non-adjacent in $D$. A collection of disjoint paths $\sP$ is a \emph{path partition} of $V(D)$, if every vertex in $V(D)$ is in exactly one path of $\sP$. We say that a stable set $S$ and a path partition $\sP$ are \emph{orthogonal} if each path of $P$ contains exactly one vertex of $S$. A digraph $D$ satisfies the $\alpha$\emph{-property} if for every maximum stable set $S$ of $D$, there exists a path partition $\sP$ such that $S$ and $\sP$ are orthogonal. A digraph $D$ is $\alpha$\emph{-diperfect} if every induced subdigraph of $D$ satisfies the $\alpha$-property. In 1982, Claude Berge proposed a characterization for $\alpha$-diperfect digraphs in terms of forbidden \emph{anti-directed odd cycles}. In 2018, Sambinelli, Silva and Lee proposed a similar conjecture. A digraph $D$ satisfies the \emph{Begin-End-property} or \emph{BE-property} if for every maximum stable set $S$ of $D$, there exists a path partition $\sP$ such that (i)~$S$ and $\sP$ are orthogonal and (ii)~for each path $P\in\sP$, either the start or the end of $P$ belongs to $S$. A digraph $D$ is \emph{BE-diperfect} if every induced subdigraph of $D$ satisfies the BE-property. Sambinelli, Silva and Lee proposed a characterization for BE-diperfect digraphs in terms of forbidden \emph{blocking odd cycles}. In this paper, we verified both conjectures for $3$-anti-circulant digraphs. We also present some structural results for $\alpha$-diperfect and BE-diperfect digraphs.
}

\keywords{$3$-anti-circulant digraph, Diperfect digraph, Berge's conjecture, Begin-End conjecture}



\maketitle

\section{Notation}
\label{nota}

We assume that the reader is familiar with basic concepts of graph theory. Thus this section is mainly concerned with establishing the notation used. For definitions that are not present in this paper, we refer the reader to Bang-Jensen and Gutin's book~\cite{bang2008digraphs} or Bondy and Murty's book~\cite{Bondy08}. 

Let $D$ be a digraph with vertex set $V(D)$ and arc set $A(D)$. We only consider finite digraphs without loops and multiple arcs. Given two vertices $u$ and $v$ of $V(D)$, we denote an arc from $u$ to $v$ by $uv$. In this case, we say that $u$ \emph{dominates} $v$, and we denote this by $u \to v$. We say that $u$ and $v$ are \emph{adjacent} if $u \to v$ or $v \to u$; otherwise, we say that $u$ and $v$ are \emph{non-adjacent}. If $u \to v$ and $v \to u$, then we denote this by $u \digon v$; we also say that $\{u,v\}$ is a \emph{digon}. If every pair of distinct vertices of $D$ are adjacent, then we say that $D$ is a \emph{semicomplete digraph}. A digraph $H$ is a \emph{subdigraph} of $D$ if $V(H)\subseteq V(D)$ and $A(H) \subseteq A(D)$; moreover, if every arc of $A(D)$ with both vertices in $V(H)$ is in $A(H)$, then we say that $H$ is \emph{induced} by $X = V(H)$, and we write $H = D[X]$. If $uv$ is an arc of $D$, then we say that $u$ and $v$ are \emph{incident} in $uv$. We say that a digraph $H$ is \emph{inverse} of $D$ if $V(H) = V(D)$ and $A(H)= \{uv : vu \in A(D)\}$. The \emph{underlying graph} of $D$, denoted by $U(D)$, is the simple graph defined by $V(U(D))= V(D)$ and $E(U(D))= \{uv : u $ and $v$ are adjacent in $D\}$.

We say that a vertex $u$ is an \emph{in-neighbor} (resp., \emph{out-neighbor}) of a vertex $v$ if $u \to v$ (resp., $v \to u$). Let $X$ be a subset of $V(D)$. We denote by $N^-(X)$ (resp., $N^+(X)$) the set of vertices in $V(D)-X$ that are in-neighbors (resp., out-neighbors) of some vertex of $X$. We define the \emph{neighborhood} of $X$ as $N(X)=N^-(X) \cup N^+(X)$; when $X=\{v\}$, we write $N^-(v)$, $N^+(v)$ and $N(v)$. We say that $v$ is a \emph{source} if $N^-(v)=\emptyset$ and a \emph{sink} if $N^+(v)=\emptyset$.

For disjoint subsets $X$ and $Y$ of $V(D)$ (or subdigraphs of $D$), we say that $X$ and $Y$ are \emph{adjacent} if some vertex of $X$ and some vertex of $Y$ are adjacent. Moreover, $X \to Y$ means that every vertex of $X$ dominates every vertex of $Y$, $X \Rightarrow Y$ means that there exists no arc from $Y$ to $X$ and $X \mapsto Y$ means that both $X \to Y$ and $X \Rightarrow Y$ hold. When $X = \{x\}$ or $Y = \{y\}$, we write $x \mapsto Y$ and $X \mapsto  y$.

A \emph{path} $P$ in a digraph $D$ is a sequence of distinct vertices $P = v_1v_2 \dots v_k$ such that for all $v_i$ in $P$, $v_iv_{i+1} \in A(D)$ for $1 \leq i \leq k-1$. Whenever it is appropriate, we treat $P$ as being the subdigraph of $D$ with vertex set $V(P) = \{v_1 , v_2, \ldots , v_k\}$ and arc set $A(P)=\{v_iv_{i+1} : 1 \leq i \leq k-1\}$. We say that $P$ \emph{starts} at $v_1$ and \emph{ends} at $v_k$. We also say that $v_1,v_k$ are \emph{endvertices} of $P$ and $v_1$ is the \emph{initial} and $v_k$ is the \emph{final} of $P$; to emphasize this fact we may write $P$ as $v_1Pv_k$. Also, whenever it is convenient, we may omit the initial or the final in the notation as $v_1P$ or $Pv_k$. We denote by $v_iPv_j$ a \emph{subpath} of $P$ where $1 \leq i \leq j \leq k$. We define the \emph{length} of $P$ as $k-1$. We denote by $\overrightarrow{P_k}$ the class of isomorphism of a path of length $k-1$. If $V(P)=V(D)$, then we say that $P$ is a \emph{Hamilton path} of $D$, and in this case, we say that $D$ is \emph{traceable}. Let $P, Q$ be paths in $D$. If $P$ ends at some vertex $v$ and $Q$ starts at some vertex $u$ such that $v \to u$, then we denote by $PQ$ the \emph{concatenation} of $P$ and $Q$. We use this notation only if $PQ$ is a path.

A \emph{cycle} $C$ in $D$ is a sequence of vertices $C = v_1v_2 \dots v_kv_1$ such that $v_1v_2 \dots v_k$ is a path, $v_kv_1 \in A(D)$ and $k \geq 2$. Whenever it is convenient, we also treat $C$ as the subdigraph of $D$ with vertex set $V(C) = \{v_1 , v_2, \ldots , v_k\}$ and arc set $A(C)=\{v_iv_{i+1} : 1 \leq i \leq k\}$ where subscripts are taken modulo $k$. We define the \emph{length} of $C$ as $k$. If $k$ is odd, then we say that $C$ is an \emph{odd cycle}. We denote by $\overrightarrow{C_k}$ the class of isomorphism of a cycle of length $k$. If $V(C)=V(D)$, then we say that $C$ is a \emph{Hamilton cycle} of $D$, and we also say that $D$ is \emph{hamiltonian}. We say that $D$ is an \emph{acyclic digraph} if $D$ does not contain cycles. We also say that $C$ is a \emph{non-oriented cycle} if $C$ is not a cycle in $D$, but $U(C)$ is a cycle in $U(D)$. In particular, if a non-oriented cycle $C$ has length three, then we say that $C$ is a \emph{transitive triangle} in $D$.

Let $D$ be a digraph. A subset $S$ of $V(D)$ is a \emph{stable set} if every pair of vertices in $S$ is non-adjacent in $D$. The cardinality of a maximum stable set in $D$ is called the \emph{stability number} of $D$ and is denoted by $\alpha(D)$. A collection of disjoint paths $\sP$ of $D$ is a \emph{path partition} of $V(D)$, if every vertex in $V(D)$ belongs to exactly one path of $\sP$. Let $S$ be a stable set of $D$. We say that $S$ and $\sP$ are \emph{orthogonal} if $ \vert   V(P) \cap S \vert    = 1$ for every $P \in \sP$.

Let $G$ be a connected graph. A \emph{clique} is a set of pairwise adjacent vertices of $G$. The \emph{clique number} of $G$, denoted by $\omega(G)$, is the size of maximum clique of $G$. We say that a vertex set $B \subset V(G)$ is a \emph{vertex cut} if $G-B$ is a disconnected graph. If $G[B]$ is a complete graph, then we say that $B$ is a \emph{clique cut}. A \emph{(proper) coloring} of $G$ is a partition of $V(G)$ into stable sets $\{S_1,\ldots, S_k\}$. The \emph{chromatic number} of $G$, denoted by $\chi(G)$, is the cardinality of a minimum coloring of $G$. We say that $G$ is \emph{perfect} if for every induced subgraph $H$ of $G$, the equality $\omega(G)=\chi(H)$ holds. Moreover, we say that a digraph $D$ is \emph{diperfect} if $U(D)$ is perfect.

\section{Introduction}
\label{intro}

Some very important results in graph theory characterize a certain class of graphs (or digraphs) in terms of certain forbidden induced subgraphs (subdigraphs). The most famous one is probably Berge's Strong Perfect Graph Conjecture~\cite{berge1961}. Berge showed that neither an odd cycle of length at least five nor its complement is perfect. He conjectured that a graph $G$ is perfect if and only if it contains neither an odd cycle of length at least five nor its complement as an induced subdigraph. In 2006, Chudnovsky, Robertson, Seymour and Thomas~\cite{chudnovsky2006strong} proved Berge's conjecture, which became known as the Strong Perfect Graph Theorem.

\begin{theorem}[Chudnovsky, Robertson, Seymour and Thomas, 2006]
\label{perf}
A graph $G$ is perfect if and only if $G$ contains neither an odd cycle of length at least five nor its complement as an induced subgraph. 
\end{theorem}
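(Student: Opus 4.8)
This is the Strong Perfect Graph Theorem, so the plan is to follow the strategy of Chudnovsky, Robertson, Seymour and Thomas; I will only describe the skeleton, since a complete argument occupies well over a hundred pages. The ``only if'' direction is Berge's elementary observation: the odd cycle $C$ of length $2k+1\ge 5$ has $\omega(C)=2$ but $\chi(C)=3$, and its complement $\overline{C}$ has $\omega(\overline{C})=k$ but $\chi(\overline{C})=k+1$, so neither is perfect; since an induced subgraph of a perfect graph is perfect, no perfect graph contains either one. All the content is in the ``if'' direction: one must show that every \emph{Berge graph} --- a graph with no induced odd cycle of length at least five and no induced complement of such a cycle --- is perfect.

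To prove this I would argue by contradiction: suppose some Berge graph is imperfect and take $G$ with $|V(G)|$ minimum, so $G$ is \emph{minimally imperfect}. By the Weak Perfect Graph Theorem (Lov\'asz), both the class of perfect graphs and the class of Berge graphs are closed under complementation, so $\overline{G}$ is again minimally imperfect and Berge, and this self-duality can be exploited throughout. One also records the classical structural facts about minimally imperfect graphs (Lov\'asz, Padberg, Chv\'atal): no clique cutset, no star cutset, $|V(G)| = \alpha(G)\omega(G)+1$, and so on.

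The heart of the proof --- and the main obstacle --- is the \emph{decomposition theorem}: every Berge graph $G$ satisfies at least one of (i) $G$ is \emph{basic}, meaning $G$ or $\overline{G}$ is bipartite, or $G$ or $\overline{G}$ is the line graph of a bipartite graph, or $G$ is a double split graph; (ii) $G$ or $\overline{G}$ admits a proper $2$-join; (iii) $G$ admits a balanced skew partition. I would prove this by a long case analysis organized around which ``unavoidable configurations'' occur in $G$: whether $G$ contains a suitable \emph{prism} (two disjoint triangles joined by three vertex-disjoint paths), or a \emph{proper wheel} (a hole together with an extra vertex having at least three neighbours on it in a nondegenerate pattern), or certain long-hole configurations. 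If $G$ contains a prism or a proper wheel, one analyses carefully how the rest of $G$ attaches to it and extracts a $2$-join or a balanced skew partition; if $G$ contains none of these, one shows directly that $G$ is basic. Keeping this analysis organized is precisely what Chudnovsky's ``trigraph'' formalism is designed for, and even choosing the right list of decompositions --- in particular realizing that homogeneous pairs are not needed and that balanced skew partitions suffice --- is part of the difficulty.

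It remains to close the loop by showing that none of (i)--(iii) is possible for the minimally imperfect graph $G$. Basic graphs are perfect: bipartite graphs by K\H{o}nig's theorem, line graphs of bipartite graphs by K\H{o}nig's edge-colouring theorem, double split graphs by a direct check, and the complements by the Weak Perfect Graph Theorem. A graph with a proper $2$-join is perfect if and only if its two blocks of decomposition are perfect (Cornu\'ejols--Cunningham), so a minimally imperfect graph --- and likewise its complement --- has no proper $2$-join. Finally, a minimally imperfect graph has no balanced skew partition; this is the balanced case of Chv\'atal's skew-partition conjecture, which is established along the way. Hence $G$ satisfies none of (i)--(iii), contradicting the decomposition theorem, so no minimally imperfect Berge graph exists and every Berge graph is perfect.
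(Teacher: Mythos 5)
The paper does not prove this statement at all: it is quoted verbatim as the Strong Perfect Graph Theorem of Chudnovsky, Robertson, Seymour and Thomas and used as background, so there is no in-paper proof to compare against. Your ``only if'' direction is complete and correct ($\omega(C_{2k+1})=2<3=\chi(C_{2k+1})$ and $\omega(\overline{C_{2k+1}})=k<k+1=\chi(\overline{C_{2k+1}})$, plus heredity of perfection), and your sketch of the ``if'' direction faithfully reproduces the architecture of the published proof: minimal counterexample, self-duality via the Weak Perfect Graph Theorem, the decomposition theorem into basic graphs, proper $2$-joins and balanced skew partitions, and the verification that a minimally imperfect Berge graph admits none of these. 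As a proof, of course, it has a gap the size of the original 150-page argument --- the decomposition theorem is named and organized but not proved, and that is where essentially all of the content lives --- but you flag this explicitly, and for a result of this scale cited as an external theorem that is the appropriate level of treatment; nothing you assert is incorrect.
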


In this paper we are concerned with two conjectures on digraphs which are somehow similar to Berge's conjecture. Those conjectures relate \emph{path partitions} and \emph{stable sets}. We need a few definitions in order to present both conjectures. 

Let $S$ be a stable set of a digraph $D$. An \emph{$S$-path partition} of $D$ is a path partition $\sP$ such that $S$ and $\sP$ are orthogonal. We say that $D$ satisfies the \emph{$\alpha$-property} if for every maximum stable set $S$ of $D$ there exists an $S$-path partition of $D$, and we say that $D$ is \emph{$\alpha$-diperfect} if every induced subdigraph of $D$ satisfies the $\alpha$-property. A digraph $C$ is an \emph{anti-directed odd cycle} if $({\rm i})$ $C = x_1x_2 \dots x_{2k+1}x_1$ is a non-oriented odd cycle, where $k \geq 2$ and $(\rm{ii})$ each of the vertices $x_1,x_2,x_3,x_4,x_6,x_8, \ldots, x_{2k}$ is either a source or a sink (see Figure~\ref{circ-berge}).

\begin{figure}[htbp]
\center
\subfloat[]{
    \tikzset{middlearrow/.style={
	decoration={markings,
		mark= at position 0.6 with {\arrow{#1}},
	},
	postaction={decorate}
}}

\tikzset{shortdigon/.style={
	decoration={markings,
		mark= at position 0.45 with {\arrow[inner sep=10pt]{<}},
		mark= at position 0.75 with {\arrow[inner sep=10pt]{>}},
	},
	postaction={decorate}
}}

\tikzset{digon/.style={
	decoration={markings,
		mark= at position 0.4 with {\arrow[inner sep=10pt]{<}},
		mark= at position 0.6 with {\arrow[inner sep=10pt]{>}},
	},
	postaction={decorate}
}}

\begin{tikzpicture}[scale = 0.5]		
	\node (n4) [black vertex] at (6.5,10) {};
	\node (n2) [black vertex] at (9,5)  {};
	\node (n3) [black vertex] at (9,8)  {};
	\node (n5) [black vertex] at (4,8)  {};
	\node (n1) [black vertex] at (4,5)  {};
	
	\node (label_n4)  at (6.5,10.5) {$v_4$};
	\node (label_n2)  at (9.5,4.5)  {$v_2$};
	\node (label_n3)  at (9.5,8.5)  {$v_3$};
	\node (label_n5)  at (3.5,8.5)  {$v_5$};
	\node (label_n1)  at (3.5,4.5)  {$v_1$};

  \foreach \from/\to in {n1/n2,n3/n2,n3/n4,n5/n4,n1/n5}
    \draw[edge,middlearrow={>}] (\from) -- (\to);    
\end{tikzpicture}
}
\quad
\subfloat[]{
    \tikzset{middlearrow/.style={
	decoration={markings,
		mark= at position 0.6 with {\arrow{#1}},
	},
	postaction={decorate}
}}

\tikzset{shortdigon/.style={
	decoration={markings,
		mark= at position 0.45 with {\arrow[inner sep=10pt]{<}},
		mark= at position 0.75 with {\arrow[inner sep=10pt]{>}},
	},
	postaction={decorate}
}}

\tikzset{digon/.style={
	decoration={markings,
		mark= at position 0.4 with {\arrow[inner sep=10pt]{<}},
		mark= at position 0.6 with {\arrow[inner sep=10pt]{>}},
	},
	postaction={decorate}
}}

\begin{tikzpicture}[scale = 0.5]		

	\node (n1) [black vertex] at (4,5)  {};
	\node (n2) [black vertex] at (9,5)  {};
	\node (n3) [black vertex] at (9,7)  {};
    \node (n4) [black vertex] at (9,9)  {};
	\node (n5) [black vertex] at (6.5,11) {};
	\node (n6) [black vertex] at (4,9)  {};
	\node (n7) [black vertex] at (4,7)  {};

	\node (label_n1)  at (3.5,4.5)  {$v_1$};
	\node (label_n2)  at (9.5,4.5)  {$v_2$};
	\node (label_n3)  at (9.5,7.5)  {$v_3$};
	\node (label_n4)  at (9.5,9.5)  {$v_4$};	
	\node (label_n5)  at (6.5,11.5) {$v_5$};
	\node (label_n6)  at (3.5,9.5)  {$v_6$};
	\node (label_n7)  at (3.5,7.5)  {$v_7$};

  \foreach \from/\to in {n1/n2,n3/n2,n1/n7,n6/n7,n6/n5,n5/n4,n3/n4}
    \draw[edge,middlearrow={>}] (\from) -- (\to);    
\end{tikzpicture}
}
\caption{\centering Examples of anti-directed odd cycles with length five and seven, respectively.}
\label{circ-berge}
\end{figure}

Berge~\cite{berge1981} showed that anti-directed odd cycles do not satisfy the $\alpha$-property, and hence, they are not $\alpha$-diperfect, which led him to conjecture the following characterization for $\alpha$-diperfect digraphs.

\begin{conjecture}[Berge, 1982]
\label{conj_berge}
A digraph $D$ is $\alpha$-diperfect if and only if $D$ does not contain an anti-directed odd cycle as an induced subdigraph. 
\end{conjecture}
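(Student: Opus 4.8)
The plan is to prove the two implications separately, the forward one being immediate and the reverse one carrying all the weight. For the direction that an $\alpha$-diperfect digraph excludes induced anti-directed odd cycles, I would argue by contraposition and invoke Berge's observation (recorded just before the conjecture) that an anti-directed odd cycle $C$ fails the $\alpha$-property. If $D$ contained such a $C$ as an induced subdigraph, then $C$ would itself be an induced subdigraph of $D$ violating the $\alpha$-property, so $D$ would not be $\alpha$-diperfect by definition; equivalently, every $\alpha$-diperfect digraph forbids induced anti-directed odd cycles. No further work is needed for this direction.

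For the reverse direction I would exploit that the class of digraphs with no induced anti-directed odd cycle is hereditary, so it suffices to prove that any such $D$ satisfies the $\alpha$-property; $\alpha$-diperfectness then follows by applying the conclusion to each induced subdigraph. I would take a vertex-minimal counterexample $D$: it contains no induced anti-directed odd cycle, it fails the $\alpha$-property (witnessed by a maximum stable set $S$ admitting no $S$-path partition), and every proper induced subdigraph satisfies the $\alpha$-property. First I would record the routine reductions. If $U(D)$ were disconnected, a maximum stable set of $D$ would restrict to a maximum stable set on each component, and the orthogonal $S$-path partitions supplied by minimality on the smaller components would combine into one for $D$; hence $D$ is connected. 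I would likewise eliminate trivially removable configurations—isolated vertices and, more generally, any vertex whose deletion preserves a maximum stable set together with an extendable orthogonal partition—using the Gallai--Milgram theorem to guarantee that the partitions produced in the smaller digraphs never use more than $\alpha(D)$ paths.

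The heart of the argument is to convert the failure of orthogonality into an induced anti-directed odd cycle. I would fix a path partition $\sP$ of $D$ chosen to be \emph{as close to orthogonal as possible}, minimizing a potential that counts the paths $P$ with $|V(P)\cap S|\neq 1$ and breaking ties by total length or by the number of $S$-vertices covered. Since no $S$-path partition exists, some path of $\sP$ is defective, meeting $S$ in zero or in at least two vertices. Starting from a defective path, I would build an alternating walk that leaves an endpoint outside $S$, follows an arc of $D$ into a vertex of $S$ lying on another path, reroutes along that path, and continues; the minimality of the potential must block every rerouting that would reduce it. Tracking which vertices along this structure are \emph{forced} to be sources or sinks—because $S$ is stable and because each improving reroute is obstructed—should yield a closed non-oriented walk whose stable vertices alternate exactly in the pattern demanded by the definition of an anti-directed odd cycle.

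The main obstacle, and the reason this conjecture remains unresolved in full generality, is to certify that the closed structure extracted above is simultaneously a genuine \emph{cycle} (distinct vertices), \emph{odd}, and \emph{induced}, with the precise alternation of sources and sinks. Parity should fall out of a count of $S$ against non-$S$ vertices along the walk, but chords are the genuine enemy: a single arc joining two non-consecutive vertices of the candidate cycle simultaneously breaks the induced requirement and, as a rule, opens a shorter reroute that contradicts the minimality of the potential rather than delivering the cycle. Controlling these chords without extra hypotheses appears to demand either a structural decomposition of $D$ or a global discharging argument that I would not expect to go through unconditionally; the rigidity available in highly structured host digraphs is precisely what would make the alternating analysis tractable and let the extracted cycle be verified to be induced and anti-directed. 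My honest expectation is therefore that this plan settles the forward direction completely and reduces the reverse direction to this chord-control problem, which is the crux one must resolve to close the conjecture.
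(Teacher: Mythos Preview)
The statement you are attempting to prove is a \emph{conjecture}, not a theorem: the paper does not contain a proof of it, and indeed it remains open in general. The paper merely states Berge's conjecture and then verifies it for the special class of $3$-anti-circulant digraphs (Theorem~\ref{3-anti-circ-berge}). So there is no ``paper's own proof'' to compare your proposal against.

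Your treatment of the forward direction is correct and is exactly what the paper records just before stating the conjecture: Berge showed that anti-directed odd cycles fail the $\alpha$-property, so any $\alpha$-diperfect digraph must exclude them as induced subdigraphs. For the reverse direction, however, your proposal is not a proof but a heuristic sketch, and you yourself identify why it does not close: the ``alternating walk'' you build from a minimal counterexample and a potential-minimizing path partition has no mechanism to guarantee that the extracted closed structure is an \emph{induced} odd cycle with the required source/sink pattern. The chord-control problem you flag at the end is not a technical detail to be cleaned up---it is the entire content of the conjecture. Everything prior to that point (minimal counterexample, connectedness, Gallai--Milgram bounds, choosing a near-orthogonal partition) is standard setup that has been known since Berge's original paper and does not constitute progress toward the open direction. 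Your honest assessment in the final paragraph is accurate: the plan settles only what was already known and leaves the actual problem untouched.
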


Denote by $\sB$ the set of all digraphs which do not contain an induced anti-directed odd cycle. So Berge's conjecture can be stated as: $D$ is $\alpha$-diperfect if and only if $D$ belongs to $\sB$. In 1982, Berge~\cite{berge1981} verified Conjecture~\ref{conj_berge} for diperfect digraphs and for symmetric digraphs (digraphs such that if $uv \in A(D)$, then $vu \in A(D)$). In the next three decades, no results regarding this problem were published. In~\cite{tesemaycon2018,ssl}, Sambinelli, Silva and Lee verified Conjecture~\ref{conj_berge} for locally in-semicomplete digraphs and digraphs whose underlying graph is series-parallel. In~\cite{freitas2021BE}, Freitas and Lee verified Conjecture~\ref{conj_berge} for arc-locally (out) in-semicomplete digraphs. To the best of our knowledge, these papers are the only ones related to Conjecture~\ref{conj_berge} that were published recently.

In an attempt to understand the main difficulties in proving Conjecture~\ref{conj_berge}, Sambinelli, Silva and Lee~\cite{tesemaycon2018,ssl} introduced the class of Begin-End-diperfect digraphs, or simply BE-diperfect digraphs, which we define next.

Let $S$ be a stable set of a digraph $D$. A path partition $\sP$ is an \emph{$S_{BE}$-path partition} of $D$ if $(\rm{i})$ $\sP$ and $S$ are orthogonal and $(\rm{ii})$ every vertex of $S$ is the initial or the final of a path in $\sP$. We say that $D$ satisfies the \emph{BE-property} if for every maximum stable set of $D$ there exists an $S_{BE}$-path partition. We say that $D$ is \emph{BE-diperfect} if every induced subdigraph of $D$ satisfies the BE-property. Note that if $D$ is BE-diperfect, then it is also $\alpha$-diperfect, but the converse is not true (see the digraph in Figure~\ref{circ-bloqueante:b}). A digraph $C$ is a \emph{blocking odd cycle} if $(\rm{i})$ $C = x_1 x_2 \dots x_{2k+1}x_1$ is a non-oriented odd cycle, where $k \geq 1$ and $(\rm{ii})$ $x_1$ is a source and $x_2$ is a sink (see Figure~\ref{circ-bloqueante}). Note that every anti-directed odd cycle is also a blocking odd cycle.

\begin{figure}[htbp]
\center
\subfloat[]{
	    \tikzset{middlearrow/.style={
	decoration={markings,
		mark= at position 0.6 with {\arrow{#1}},
	},
	postaction={decorate}
}}

\tikzset{shortdigon/.style={
	decoration={markings,
		mark= at position 0.45 with {\arrow[inner sep=10pt]{<}},
		mark= at position 0.75 with {\arrow[inner sep=10pt]{>}},
	},
	postaction={decorate}
}}

\tikzset{digon/.style={
	decoration={markings,
		mark= at position 0.4 with {\arrow[inner sep=10pt]{<}},
		mark= at position 0.6 with {\arrow[inner sep=10pt]{>}},
	},
	postaction={decorate}
}}

\begin{tikzpicture}[scale = 0.5,auto=left]
	\node (n4) [black vertex] at (6.5,10) {};
	\node (n2) [black vertex] at (9,5)  {};
	\node (n3) [black vertex] at (9,8)  {};
	\node (n5) [black vertex] at (4,8)  {};
	\node (n1) [black vertex] at (4,5)  {};
	
	\node (label_n4)  at (6.5,10.5) {$v_4$};
	\node (label_n2)  at (9.5,4.5)  {$v_2$};
	\node (label_n3)  at (9.5,8.5)  {$v_3$};
	\node (label_n5)  at (3.5,8.5)  {$v_5$};
	\node (label_n1)  at (3.5,4.5)  {$v_1$};

  \foreach \from/\to in {n1/n2,n3/n2,n3/n4,n1/n5}
    \draw[edge,middlearrow={>}] (\from) -- (\to);
    
  \foreach \from/\to in {n5/n4}
    \draw[edge,digon] (\from) -- (\to);      
\end{tikzpicture}
	     \label{circ-bloqueante:a}
}
\quad
\subfloat[]{
    \input{figs/circ-bloqueante-2.tex}
    \label{circ-bloqueante:b}

}
\caption{\centering Examples of blocking odd cycles with length five and three, respectively. We also say that the digraph in (b) is a transitive triangle.}
\label{circ-bloqueante}
\end{figure}

Sambinelli, Silva and Lee~\cite{tesemaycon2018,ssl} showed that blocking odd cycles do not satisfy the BE-property, and hence, they are not BE-diperfect, which led them to conjecture the following characterization of BE-diperfect digraphs.

\begin{conjecture}[Sambinelli, Silva and Lee, 2018]
\label{conj_be}
A digraph $D$ is BE-diperfect if and only if $D$ does not contain a blocking odd cycle as an induced subdigraph.
\end{conjecture}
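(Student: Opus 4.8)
The forward (``only if'') direction is immediate and does not use the hypothesis in full strength. Sambinelli, Silva and Lee~\cite{ssl} proved that no blocking odd cycle satisfies the BE-property. Hence if some induced subdigraph $C = D[V(C)]$ of a BE-diperfect digraph $D$ were a blocking odd cycle, then $C$ would be an induced subdigraph violating the BE-property, a contradiction. So the entire content of Conjecture~\ref{conj_be} lies in the ``if'' direction, and since both ``being BE-diperfect'' and ``containing no induced blocking odd cycle'' are hereditary, that direction reduces to a single implication: \emph{every digraph $D$ with no induced blocking odd cycle satisfies the BE-property}. My plan is to prove this implication by a minimal counterexample argument.

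Take $D$ with $|V(D)|$ minimum among digraphs having no induced blocking odd cycle yet failing the BE-property, and fix a witnessing maximum stable set $S$ admitting no $S_{BE}$-path partition. Minimality yields structure: a disjoint union reduces to its components, so $D$ is connected; if $S = V(D)$ then $D$ is edgeless and each vertex is a trivial path with itself as endvertex, so $S \neq V(D)$; and for every $v \in V(D) \setminus S$ we have $\alpha(D-v) = \alpha(D)$ with $S$ still maximum in $D-v$, whence by minimality $D-v$ admits an $S_{BE}$-path partition $\sP'$. The crux is then an \emph{absorption} step: extend $\sP'$ to an $S_{BE}$-path partition of $D$ by incorporating $v$. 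Three moves each preserve orthogonality and the begin--end condition: (i)~\emph{interior insertion}, splitting an arc $ab$ of some $P \in \sP'$ when $a \to v \to b$; (ii)~\emph{free-end extension}, appending $v$ after the non-$S$ endvertex of a path whose $S$-vertex is its initial (or symmetrically prepending $v$ before the non-$S$ endvertex of a path whose $S$-vertex is its final), which also covers attaching $v$ to a trivial one-vertex path; and (iii)~an \emph{alternating re-routing} that reshuffles several paths along a suitable alternating walk. If some move always applied we would reach a contradiction, so absorption must be blocked for every $v \in V(D)\setminus S$.

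I would then analyze the blocked configuration by tracing a maximal alternating walk starting at $v$, moving along arcs of $D$ and along the paths of $\sP'$, and argue that its closure produces an induced odd cycle carrying a source and a sink, i.e.\ an induced blocking odd cycle as in Figure~\ref{circ-bloqueante}, contradicting the hypothesis and completing the proof.

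The hard part is precisely this extraction. Forbidding blocking odd cycles only \emph{as induced subdigraphs} is a weak constraint: unlike the specialized classes handled in~\cite{ssl,freitas2021BE} and the $3$-anti-circulant digraphs treated in this paper, a general $D$ places no local restriction on adjacencies. The delicate points are (a)~guaranteeing that the extracted cycle is \emph{chordless} in $U(D)$, since any chord would let us shortcut the alternating walk and re-run an absorption move, so chords must be either ruled out or exploited; (b)~controlling the \emph{parity} of the cycle so that it is genuinely odd; and (c)~placing the source and the sink in the required positions, which is where the failure of the begin--end condition, rather than mere orthogonality, must be converted into the source/sink pattern of a blocking odd cycle. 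Moreover, since even the weaker $\alpha$-property of Conjecture~\ref{conj_berge} is open, the absorption cannot be bootstrapped from an assumed orthogonal partition and must be built directly. Making the alternating-walk analysis yield an induced blocking odd cycle in full generality is the main obstacle, and it is exactly this step that keeps the statement a conjecture; the present paper circumvents it by exploiting the extra adjacency structure available in $3$-anti-circulant digraphs.
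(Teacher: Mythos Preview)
The statement is a \emph{conjecture}, and the paper does not prove it; the paper only verifies the special case of $3$-anti-circulant digraphs (Theorem~\ref{3-anti-circ-be}). So there is no ``paper's own proof'' to compare against, and your write-up is not a proof either: you outline a minimal-counterexample-plus-absorption strategy and then explicitly concede in your final paragraph that the decisive step --- extracting an \emph{induced} blocking odd cycle from a blocked alternating walk --- is missing. That is not a gap in execution; it is the whole open problem.

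Concretely, the three issues you label (a)--(c) are not loose ends but the substance of the conjecture. Nothing in your setup forces the alternating walk to close up with odd length, to be chordless in $U(D)$, or to place a source and a sink adjacently; any one of these can fail in a general digraph, and when it does your absorption moves (i)--(iii) need not recover. The paper avoids this entirely: for $3$-anti-circulant digraphs it never attempts to produce a blocking odd cycle. Instead it uses the anti-$P_4$ closure property to prove, via Lemmas~\ref{3-anti-circ-B+-stable}, \ref{3-anti-circ-N-arc-B+B+-}, \ref{3-anti-circ-N-arc-B+B-} and \ref{3-anti-circ-N-arc-B+B-stable}, that (in a minimal counterexample) the sets $B^+$, $B^-$, $B^{\pm}$ are stable and mutually non-adjacent, so $|N(S)|\le|S|$ and Lemma~\ref{stable_set_S_menor_vizinhanca} applies. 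That argument has no analogue for arbitrary $D$, and your alternating-walk plan does not supply one.
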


Denote by $\D$ the set of all digraphs which do not contain an induced blocking odd cycle. So Conjecture~\ref{conj_be} can be stated as: $D$ is BE-diperfect if and only if $D$ belongs to $\D$. Sambinelli, Silva and Lee~\cite{tesemaycon2018,ssl} verified Conjecture~\ref{conj_be} for locally in-semicomplete digraphs and digraphs whose underlying graph are series-parallel or perfect. In~\cite{freitas2021BE}, Freitas and Lee verified Conjecture~\ref{conj_be} for arc-locally (out) in-semicomplete digraphs. Note that a diperfect digraph belongs to $\D$ if and only if it contains no induced transitive triangle.

The rest of this paper is organized as follows. In Section~\ref{struc-results}, we present some structural results for $\alpha$-diperfect digraphs and BE-diperfect digraphs. In Section~\ref{3-anti-circ}, we present some structural results for $3$-anti-circulant digraphs and we verify both Conjecture~\ref{conj_berge} and Conjecture~\ref{conj_be} for these digraphs. In Section~\ref{conclu}, we present some final comments.

\section{Some structural results}
\label{struc-results}

In this section, we present some structural results for BE-diperfect digraphs and $\alpha$-diperfect digraphs. Let $D$ be a digraph and let $S$ be a maximum stable set of $D$. Since every $S_{BE}$-path partition of $D$ is also an $S$-path partition, it follows that if $D$ satisfies the BE-property, then $D$ also satisfies the $\alpha$-property. Moreover, the \emph{principle of directional duality} states that every structural result in a digraph has a companion structural result in its inverse digraph. Note that a digraph $D$ is BE-diperfect (resp., $\alpha$-diperfect) if and only if its inverse digraph is BE-diperfect (resp., $\alpha$-diperfect). 

Let us start with the following structural lemma.

\begin{lemma}
\label{arc-unique-N(u)-P-BE}
Let $D$ be a digraph such that every proper induced subdigraph of $D$ satisfies the BE-property (resp., $\alpha$-property). Let $S$ be a maximum stable set in $D$. Let $P=v_1v_2 \ldots v_k$ be a path of $D$ such that $V(P) \cap S = \emptyset$. If there exists a vertex $u$ in $D-V(P)$ such that $u \notin S$, $N^+(u) \neq \emptyset$ and $N^+(u) \subseteq V(P)$, then $D$ admits an $S_{BE}$-path partition (resp., $S$-path partition).
\end{lemma}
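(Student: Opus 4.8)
My plan is to build the desired path partition of $D$ from a path partition of the smaller digraph $D' = D - u$, exploiting the hypothesis that $N^+(u) \subseteq V(P)$ together with $u \notin S$. First, observe that $S$ remains a maximum stable set of $D' = D - u$: indeed $u \notin S$ so $S \subseteq V(D')$, and since $N^+(u) \ne \emptyset$, $u$ has a neighbor in $D$, so $\alpha(D') \le \alpha(D)$ cannot exceed $|S|$ (and $S$ is stable in $D'$, giving equality). Wait — one must be slightly careful: deleting a vertex can only decrease or keep the stability number, but we need $S$ itself to still be maximum, which is immediate since $|S| = \alpha(D) \ge \alpha(D') \ge |S|$. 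Since $D'$ is a proper induced subdigraph of $D$, by hypothesis it satisfies the BE-property (resp.\ $\alpha$-property), so there exists an $S_{BE}$-path partition (resp.\ $S$-path partition) $\sP'$ of $D'$.

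Now I want to reinsert $u$ into $\sP'$ to obtain a path partition of $D$ with the same orthogonality and begin-end properties. Let $Q$ be the path of $\sP'$ containing $u$, and write $Q = u_1 u_2 \ldots u_t$ with $u = u_j$. The key point is that $u \notin S$, so $u$ is \emph{not} the unique vertex of $Q$ in $S$; removing $u$ from $Q$ must not destroy orthogonality. Here is where I expect the main obstacle: simply deleting $u$ from $Q$ may break $Q$ into two pieces $u_1 \ldots u_{j-1}$ and $u_{j+1} \ldots u_t$, and only one of these contains the $S$-vertex of $Q$, so the other piece has no vertex of $S$ — and in the BE case, even the piece containing the $S$-vertex might now fail condition (ii) if $u$ was an endpoint. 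To handle this, I will instead reroute: since $N^+(u) \ne \emptyset$ and $N^+(u) \subseteq V(P)$, the vertex $u$ dominates some vertex $v_i$ of $P$. I will first extract $u$ from its path $Q$ — the subtle part is arguing that $Q - u$ can be reorganized (or the stray piece attached elsewhere) without creating a second $S$-vertex or losing one; the cleanest route is to note that since $P$ contains no vertex of $S$, the whole of $P$ lies inside paths of $\sP'$, and I can surgically cut $P$'s host path(s) at $v_i$, prepend the initial segment of $P$ up to $v_i$ by the arc $u \to v_i$, thereby splicing $u$ and a chunk of $P$ together, and rebalance the leftover fragments among existing paths.

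A more robust approach, and the one I would actually carry out, avoids case analysis on $\sP'$ directly: apply the BE-property (resp.\ $\alpha$-property) not to $D - u$ but to $D - V(P')$ for a suitable sub-path, or better, argue as follows. Since $V(P) \cap S = \emptyset$, consider $D'' = D - V(P)$. Hmm, but $u$ need not lie in any nice relation to that. So I will commit to the $D - u$ approach and spend the bulk of the proof on the reinsertion surgery: take $\sP'$, let $Q$ be $u$'s path and $R$ be the (possibly equal) path containing the out-neighbor $v_i \in N^+(u)$ that $u$ dominates; write $R = r_1 \ldots r_m$ with $v_i = r_\ell$. Replace $Q$ and $R$ in $\sP'$ by the paths obtained from: the segment of $Q$ before $u$, the segment of $Q$ after $u$, the path $u\, r_\ell r_{\ell+1} \ldots r_m$, and $r_1 \ldots r_{\ell-1}$ — then merge the orphaned $S$-vertex-free segments onto the remaining paths using arcs of $P$ (available because consecutive vertices of $P$ are adjacent and $P$'s vertices are distributed among $\sP'$'s paths). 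The invariant to maintain at each step is: every path has exactly one $S$-vertex, and (for BE) that $S$-vertex is an endpoint; since $u \notin S$ and all of $V(P) \cap S = \emptyset$, no manipulation involving $u$ or vertices of $P$ can touch the $S$-membership counts, so orthogonality is automatic, and one only needs to verify the endpoint condition is preserved, which follows because we never detach an $S$-vertex from an end of its path — we only attach $S$-free material at the $u$-end. I expect the endpoint bookkeeping in the BE case to be the genuinely delicate part; the $\alpha$-case is strictly easier since condition (ii) is vacuous there.
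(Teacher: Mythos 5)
There is a genuine gap. Your plan deletes $u$ and tries to reinsert it into an $S_{BE}$-path partition $\sP'$ of $D-u$, but the reinsertion step does not go through. (As a preliminary matter, the write-up is internally inconsistent: $\sP'$ partitions $D-u$, so there is no ``path $Q$ of $\sP'$ containing $u$''; what you actually have is only the path $R$ containing some out-neighbour $v_i$ of $u$.) The real problem is the one you yourself flag: inserting $u$ via the arc $u\to v_i$ forces you to cut $R=r_1\ldots r_m$ at $r_\ell=v_i$, producing $r_1\ldots r_{\ell-1}$ and $u r_\ell\ldots r_m$, and exactly one of these pieces contains the unique $S$-vertex of $R$; the other is an $S$-free path, so orthogonality already fails in the $\alpha$-case, before any endpoint bookkeeping. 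Your proposed repair --- ``merge the orphaned $S$-free segments onto the remaining paths using arcs of $P$'' --- does not work, because $R$ is an arbitrary path of $\sP'$ passing through $v_i$: the vertices $r_1,\ldots,r_{\ell-1}$ need not belong to $V(P)$ at all, so arcs of $P$ give no way to reattach them, and nothing forces $r_{\ell-1}$ to dominate the start of any other path. Knowing only that $u$ dominates some vertex sitting in the interior of some path of $\sP'$ gives no usable insertion point.

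The hypothesis $N^+(u)\subseteq V(P)$ has to be exploited differently, and this is the idea the paper's proof rests on: take $v_i$ to be the out-neighbour of $u$ on $P$ of \emph{smallest} index and delete the whole tail $P'=v_iPv_k$, not the vertex $u$. Since $V(P')\cap S=\emptyset$, the set $S$ remains a maximum stable set of $D'=D-V(P')$, and --- crucially --- all of $N^+(u)$ lies in $V(P')$, so $u$ becomes a sink of $D'$. Hence in any $S_{BE}$-path partition of $D'$ the path $R$ containing $u$ must \emph{end} at $u$, and one concatenates $RP'$ via the arc $uv_i$. No path is ever split, so orthogonality is preserved for free, and the begin/end condition holds because the $S$-vertex of $R$ must sit at its start (its end is $u\notin S$). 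If you want to keep a ``delete one piece, reattach along one arc'' argument, this tail $P'$ is the piece to delete.
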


\begin{proof}
Let $i$ be the minimum in $\{1,2, \ldots,k \}$ such that $u \to v_i$. Let $P' = v_iP{v_k}$. Note that $N^+(u) \subseteq V(P')$. Let $D'=D-V(P')$. Note that $u$ is a sink in $D'$. Since $V(P') \cap S = \emptyset$, $S$ is a maximum stable set in $D'$. By hypothesis, $D'$ is BE-perfect. Let $\sP'$ be an $S_{BE}$-path partition of $D'$. Let $R$ be a path in $\sP'$ such that $u \in V(R)$. Since $u$ is a sink in $D'$, it follows that $R$ ends at $u$. Since $u \to v_i$, the collection $(\sP'-\{R\}) \cup \{RP'\}$ is an $S_{BE}$-path partition of $D$. 
\end{proof}

By the \PDD, we have the following result.

\begin{lemma}
\label{arc-unique-N(u)-P-BE-dual}
Let $D$ be a digraph such that every proper induced subdigraph of $D$ satisfies the BE-property (resp., $\alpha$-property). Let $S$ be a maximum stable set in $D$. Let $P=v_1v_2 \ldots v_k$ be a path of $D$ such that $V(P) \cap S = \emptyset$. If there exists a vertex $u$ in $D-V(P)$ such that $u \notin S$, $N^-(u) \neq \emptyset$ and $N^-(u) \subseteq V(P)$, then $D$ admits an $S_{BE}$-path partition (resp., $S$-path partition).
\qed
\end{lemma}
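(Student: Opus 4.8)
The plan is to obtain this statement from Lemma~\ref{arc-unique-N(u)-P-BE} by the \PDD, so that essentially no new work is needed. First I would let $H$ denote the inverse digraph of $D$. Arc reversal does not change which pairs of vertices are adjacent, so a set is stable in $H$ if and only if it is stable in $D$; in particular $S$ is a maximum stable set of $H$. Every proper induced subdigraph of $H$ is the inverse of a proper induced subdigraph of $D$, and therefore satisfies the BE-property (resp.\ the $\alpha$-property). The reversal of $P=v_1v_2\ldots v_k$ is the path $v_kv_{k-1}\ldots v_1$ of $H$ on the same vertex set, which is still disjoint from $S$, and the hypotheses $N^-(u)\neq\emptyset$ and $N^-(u)\subseteq V(P)$ (computed in $D$) translate under arc reversal into the corresponding statements $N^+(u)\neq\emptyset$ and $N^+(u)\subseteq V(P)$ in $H$, while $u\notin S$ is unaffected.

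Then I would apply Lemma~\ref{arc-unique-N(u)-P-BE} to $H$, $S$, the reversed path, and $u$, obtaining an $S_{BE}$-path partition (resp.\ an $S$-path partition) $\sP$ of $H$. Reversing every path of $\sP$ gives a collection of paths of $D$ that partitions $V(D)$. Orthogonality with $S$ is preserved because it depends only on the vertex sets of the paths, and condition~(ii) in the definition of an $S_{BE}$-path partition is preserved because reversal merely interchanges the initial and final vertices of each path, so the requirement that an endvertex of each path lie in $S$ is invariant. Hence $D$ admits an $S_{BE}$-path partition (resp.\ an $S$-path partition), as desired.

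Alternatively, one can give a self-contained argument that mirrors the proof of Lemma~\ref{arc-unique-N(u)-P-BE}: let $i$ be the \emph{maximum} index in $\{1,\ldots,k\}$ with $v_i\to u$, set $P'=v_1Pv_i$ and $D'=D-V(P')$, observe that by the choice of $i$ we have $N^-(u)\subseteq V(P')$, so that $u$ is a source in $D'$, and that $S$ is still a maximum stable set in $D'$; take an $S_{BE}$-path partition $\sP'$ of $D'$, note that the path $R\in\sP'$ containing $u$ must start at $u$ since $u$ is a source of $D'$, and replace $R$ by $P'R$ to obtain the desired path partition of $D$.

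There is no real obstacle here. The only point that deserves a moment of care is verifying that arc reversal genuinely preserves all three ingredients used — maximality of $S$, the BE-property (resp.\ $\alpha$-property) of every proper induced subdigraph, and condition~(ii) of the $S_{BE}$-path partition — but this is precisely what the remarks on the \PDD\ stated just before the lemma assert, so it can be invoked rather than reproved.
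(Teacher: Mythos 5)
Your proposal is correct and takes essentially the same route as the paper, which dispatches this lemma in one line ``by the \PDD{}'' applied to Lemma~\ref{arc-unique-N(u)-P-BE}; your first argument merely spells out that duality in full detail. The self-contained alternative (taking the \emph{maximum} index $i$ with $v_i\to u$ and prepending $P'$ to the path starting at the source $u$) is also valid and mirrors the original proof correctly.
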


The next lemma is similar to Lemma~\ref{arc-unique-N(u)-P-BE}, but it provides a different technique.

\begin{lemma}
\label{arc-unique-u-disjoint-P-BE}
Let $D$ be a digraph such that every proper induced subdigraph of $D$ satisfies the BE-property (resp., $\alpha$-property). Let $S$ be a maximum stable set in $D$. Let $P=v_1v_2 \ldots v_k$ be a path of $D$ such that $V(P) \cap S = \emptyset$. If there exists an arc $u_1u_2$ in $A(D)$ such that $u_1 \notin S$, $\{u_1,u_2\} \cap V(P)= \emptyset$, $v_k \to u_2$ and $N^+(u_1) \subseteq V(P) \cup \{u_2\}$, then $D$ admits an $S_{BE}$-path partition (resp., $S$-path partition).
\end{lemma}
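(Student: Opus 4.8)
The plan is to mimic the proof of Lemma~\ref{arc-unique-N(u)-P-BE}, but instead of removing a suffix of $P$ we will remove the whole path $P$ together with the vertex $u_2$, using the arc $u_1u_2$ to reattach things. First I would let $Q$ be the path obtained by concatenating $u_2$ in front of $P$: since $v_k \to u_2$ is the wrong direction, I actually need to be careful about orientation --- $v_k \to u_2$ means we can append $u_2$ at the \emph{end} of $P$, so set $Q = v_1 v_2 \ldots v_k u_2 = P u_2$, which is a path because $v_k \to u_2$. Now consider $D' = D - V(Q) = D - (V(P) \cup \{u_2\})$. Since $V(P) \cap S = \emptyset$ and (as one checks) $u_2 \notin S$ --- indeed if $u_2 \in S$ then $u_1$, being an in-neighbour of $u_2$, is not in $S$, and we could instead invoke Lemma~\ref{arc-unique-N(u)-P-BE} or argue directly; more simply, $S$ remains a maximum stable set of $D'$ as long as $V(Q) \cap S = \emptyset$, and if $u_2 \in S$ a short separate argument handles it --- I will assume $V(Q) \cap S = \emptyset$, so $S$ is a maximum stable set of $D'$.

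The key observation is that in $D'$ the vertex $u_1$ has become a sink: by hypothesis $N^+(u_1) \subseteq V(P) \cup \{u_2\} = V(Q)$, all of which has been deleted, so $N^+_{D'}(u_1) = \emptyset$. Also $u_1 \notin S$ and $u_1 \in V(D')$ since $u_1 \notin V(P) \cup \{u_2\}$. By hypothesis $D'$ satisfies the BE-property (resp.\ $\alpha$-property), so it admits an $S_{BE}$-path partition (resp.\ $S$-path partition) $\sP'$. Let $R$ be the path of $\sP'$ containing $u_1$. Since $u_1$ is a sink in $D'$, $R$ ends at $u_1$, i.e.\ $R = R' u_1$ for some (possibly trivial) subpath. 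Because $u_1 \to u_2$ and $u_2$ is the start of $Q$, the concatenation $RQ$ is a path, and $(\sP' \setminus \{R\}) \cup \{RQ\}$ is a path partition of $D$. Orthogonality with $S$ is preserved: $V(Q) \cap S = \emptyset$, so $RQ$ contains exactly the one vertex of $S$ that $R$ contained, and every other path is untouched.

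For the BE-case I also need condition (ii): every vertex of $S$ is the start or end of its path. The only path that changed is $RQ$. The vertex of $S$ on $RQ$ is the vertex of $S$ on $R$, which in $\sP'$ was the start or end of $R$. If it was the start of $R$ it is still the start of $RQ$, so we are fine. If it was the \emph{end} of $R$, then --- since $R$ ends at $u_1 \notin S$ --- that end vertex is $u_1$ itself, contradicting $u_1 \notin S$; hence this case does not arise and the vertex of $S$ on $R$ must have been the start of $R$. Thus $RQ$ starts with a vertex of $S$, and $(\sP' \setminus \{R\}) \cup \{RQ\}$ is an $S_{BE}$-path partition of $D$.

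The main obstacle I expect is the subtle case $u_2 \in S$, which breaks the clean ``delete $V(Q)$'' argument because then $S$ is no longer contained in $V(D')$ with full cardinality; one must instead delete only $V(P)$, observe $u_1$ becomes a sink whose sole remaining possible out-neighbour is $u_2$, and argue that in an $S_{BE}$-path partition of $D - V(P)$ the vertex $u_2 \in S$ is an endpoint of its path, then splice in $P$ appropriately --- essentially reducing to Lemma~\ref{arc-unique-N(u)-P-BE} applied inside $D - V(P)$ with the one-vertex path. A secondary point to check carefully is that the trivial cases ($R$ consisting of $u_1$ alone, or $P$ degenerate) are handled uniformly by treating paths of length zero as allowed, which the paper's definitions permit.
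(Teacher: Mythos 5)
There is a genuine gap, and it is in the central gluing step. You define $Q = v_1v_2\ldots v_k u_2 = Pu_2$, so $Q$ \emph{ends} at $u_2$ (the hypothesis $v_k \to u_2$ only lets you append $u_2$ after $v_k$), yet two sentences later you assert that ``$u_2$ is the start of $Q$'' and use the arc $u_1u_2$ to form $RQ$. With $Q$ as you actually defined it, concatenating $R$ (which ends at $u_1$) with $Q$ would require the arc $u_1 \to v_1$, which is not a hypothesis of the lemma: you only know $N^+(u_1) \subseteq V(P) \cup \{u_2\}$, so $u_1$ may dominate, say, only $u_2$ and $v_3$. Nor can the strategy be repaired by entering $P$ at the first vertex $v_i$ with $u_1 \to v_i$ and then continuing $v_iPv_ku_2$: since you deleted \emph{all} of $V(P)\cup\{u_2\}$ from $D'$, the prefix $v_1,\ldots,v_{i-1}$ would then be covered by no path of the resulting collection. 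The ``delete everything and reattach as one block'' plan is therefore not salvageable as stated.

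The paper's proof avoids this by deleting only the suffix $P' = v_iPv_k$, where $i$ is minimal with $u_1 \to v_i$, and by \emph{keeping $u_2$ and the prefix $v_1,\ldots,v_{i-1}$ inside $D'$}. Then $u_1$ is not necessarily a sink in $D'$ (it may still dominate $u_2$), and two cases arise for the path $R \in \sP'$ containing $u_1$: either $R$ ends at $u_1$, in which case $RP'$ works via $u_1 \to v_i$; or, since $N^+_{D'}(u_1) \subseteq \{u_2\}$, the arc $u_1u_2$ lies on $R$, in which case one splits $R = R_1R_2$ at that arc and forms $R_1P'R_2$ using $u_1 \to v_i$ and $v_k \to u_2$. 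This two-case splice is exactly what replaces your single concatenation, and it also sidesteps your worry about $u_2 \in S$, since $u_2$ is never removed. Your observation that the $S$-vertex of $R$ must be its start (because $R$ ends at $u_1 \notin S$) is correct and is the right way to check the BE-condition in the first case, but the construction feeding into it does not stand.
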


\begin{proof}
Let $i$ be the minimum in $\{1,2, \ldots,k \}$ such that $u_1 \to v_i$. Let $P' = v_iP{v_k}$. Note that $N^+(u_1) \subseteq V(P') \cup \{u_2\}$. Let $D'=D-V(P')$. Since $V(P') \cap S= \emptyset$, $S$ is a maximum stable set in $D'$. By hypothesis, $D'$ is BE-diperfect. Let $\sP'$ be an $S_{BE}$-path partition of $D'$. Let $R$ be a path in $\sP'$ such that $u_1 \in V(R)$. If $R$ ends at $u_1$, then since $u_1 \to v_i$, it follows that the collection $(\sP'-\{R\}) \cup \{RP'\}$ is an $S_{BE}$-path partition of $D$. So we may assume that $P$ does not end at $u_1$. Since $N^+(u_1) \subseteq V(P') \cup \{u_2\}$, it follows that $u_1u_2$ is an arc in $R$. Let $w_1$ and $w_p$ be the endvertices of $R$. Let $R_1 = w_1Ru_1$ and let $R_2 = u_2Rw_p$. Since $u_1 \to v_i$ and $v_k \to u_2$, the collection $(\sP'-\{R\}) \cup \{R_1P'R_2\}$ is an $S_{BE}$-path partition of $D$.
\end{proof}

By the \PDD, we have the following result.

\begin{lemma}
\label{arc-unique-u-disjoint-P-BE-dual}
Let $D$ be a digraph such that every proper induced subdigraph of $D$ satisfies the BE-property (resp., $\alpha$-property). Let $S$ be a maximum stable set in $D$. Let $P=v_1v_2 \ldots v_k$ be a path of $D$ such that $V(P) \cap S = \emptyset$. If there exists an arc $u_1u_2$ in $A(D)$ such that $u_2 \notin S$, $\{u_1,u_2\} \cap V(P)= \emptyset$, $u_1 \to v_1$ and $N^-(u_2) \subseteq V(P) \cup \{u_1\}$, then $D$ admits an $S_{BE}$-path partition (resp., $S$-path partition).
\qed
\end{lemma}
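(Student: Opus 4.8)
The plan is to derive this statement from Lemma~\ref{arc-unique-u-disjoint-P-BE} by the \PDD, so the entire argument consists of transporting the hypotheses to the inverse digraph, which is precisely why the statement above is given without a separate argument. Let $\overleftarrow{D}$ denote the inverse digraph of $D$. First I would record the two facts that make this legitimate. (1)~Inversion does not change adjacency and satisfies $\overleftarrow{D}[X]=\overleftarrow{D[X]}$ for every $X\subseteq V(D)$; hence $S$ is still a maximum stable set in $\overleftarrow{D}$, and every proper induced subdigraph of $\overleftarrow{D}$ is the inverse of a proper induced subdigraph of $D$. (2)~Reversing each path of a path partition $\sP$ of $D$ yields a path partition $\overleftarrow{\sP}$ of $\overleftarrow{D}$ with the same vertex classes; since reversing a path interchanges its initial and its final vertex, $\sP$ is an $S_{BE}$-path partition (resp., $S$-path partition) of $D$ if and only if $\overleftarrow{\sP}$ is one of $\overleftarrow{D}$. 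In particular the BE-property (resp., $\alpha$-property) is preserved under inversion, so by~(1) every proper induced subdigraph of $\overleftarrow{D}$ satisfies the BE-property (resp., $\alpha$-property).

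Next I would translate the data of the lemma. Set $Q:=v_kv_{k-1}\ldots v_1$; this is a path of $\overleftarrow{D}$ with $V(Q)=V(P)$, hence $V(Q)\cap S=\emptyset$, and its final vertex is $v_1$. The arc $u_1u_2\in A(D)$ becomes the arc $u_2u_1\in A(\overleftarrow{D})$. We have $u_2\notin S$ and $\{u_2,u_1\}\cap V(Q)=\emptyset$; the hypothesis $u_1\to v_1$ in $D$ says that the final vertex $v_1$ of $Q$ dominates $u_1$ in $\overleftarrow{D}$; and $N^-_D(u_2)\subseteq V(P)\cup\{u_1\}$ says $N^+_{\overleftarrow{D}}(u_2)\subseteq V(Q)\cup\{u_1\}$. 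Taking $u_2$ in the role of the ``$u_1$'' of Lemma~\ref{arc-unique-u-disjoint-P-BE} and $u_1$ in the role of its ``$u_2$'', these are exactly the hypotheses of that lemma for the digraph $\overleftarrow{D}$, the path $Q$, the arc $u_2u_1$, and the stable set $S$.

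Therefore Lemma~\ref{arc-unique-u-disjoint-P-BE} provides an $S_{BE}$-path partition (resp., $S$-path partition) $\mathcal{Q}$ of $\overleftarrow{D}$, and by fact~(2) the reversal $\overleftarrow{\mathcal{Q}}$ is an $S_{BE}$-path partition (resp., $S$-path partition) of $\overleftarrow{\overleftarrow{D}}=D$, which is what we want. I do not expect a genuine obstacle: the only delicate point is bookkeeping, namely reversing every arrow correctly and assigning $u_1$ and $u_2$ to the right roles, since a single flipped inclusion or a swapped role would stop Lemma~\ref{arc-unique-u-disjoint-P-BE} from applying; all of the combinatorial content already lives in that lemma. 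If a self-contained argument were preferred, it is the verbatim dual of the proof of Lemma~\ref{arc-unique-u-disjoint-P-BE}: let $j$ be the maximum index with $v_j\to u_2$, put $P'=v_1Pv_j$, pass to $D-V(P')$, take the path $R$ of an $S_{BE}$-path partition of that subdigraph that contains $u_2$, and either prepend $P'$ to $R$ (if $R$ starts at $u_2$) or, using that $u_1u_2$ must then be an arc of $R$, splice $P'$ between the two pieces of $R$ via the arcs $u_1\to v_1$ and $v_j\to u_2$.
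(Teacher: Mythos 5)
Your proposal is correct and is exactly the paper's intended argument: the paper proves this lemma by invoking the principle of directional duality on Lemma~\ref{arc-unique-u-disjoint-P-BE}, and you have simply spelled out that reduction (inverting the digraph, reversing $P$, swapping the roles of $u_1$ and $u_2$, and checking that the BE-property and $S_{BE}$-path partitions are preserved under inversion), with all hypotheses translated correctly. The self-contained dual argument you sketch at the end also matches the structure of the paper's proof of the primal lemma.
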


Next, we show that if a digraph $D$ contains a special partition of its vertices, then $D$ admits an $S_{BE}$-path partition (resp., $S$-path partition).

\begin{lemma}
\label{lem-part-V1-V2-V3}
Let $D$ be a digraph such that every proper induced subdigraph of $D$ satisfies the BE-property (resp.,$\alpha$-property). Let $S$ be a maximum stable set of $D$. If $V(D)$ admits a partition $(V_1,V_2,V_3)$ such that $V_1 \mapsto V_2 \mapsto V_3$, $D[V_2]$ is hamiltonian, $\vert V_2 \vert \ge 2$ and $ \vert V_2 \cap S \vert  \leq 1$, then $D$ admits an $S_{BE}$-path partition (resp., $S$-path partition).
\end{lemma}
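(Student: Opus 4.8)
The plan is to reduce to a smaller induced subdigraph by deleting $V_2$ (or almost all of it), invoke the hypothesis to obtain an $S_{BE}$-path partition (resp.\ $S$-path partition) of the remainder, and then splice a Hamilton path of $D[V_2]$ back in, using the domination relations $V_1 \mapsto V_2 \mapsto V_3$ to make the concatenation legal. The key point that must be handled carefully is the orthogonality/Begin-End condition: since $D[V_2]$ is hamiltonian and $\vert V_2\vert\ge 2$, a Hamilton \emph{path} $Q$ of $D[V_2]$ can be chosen with either endpoint prescribed (rotate the Hamilton cycle), so if $V_2\cap S=\{s\}$ we can arrange $s$ to be the initial or the final of $Q$, and if $V_2\cap S=\emptyset$ we are free.

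First I would split into cases according to $\vert V_2\cap S\vert$. Suppose $V_2\cap S=\emptyset$. Let $D'=D-V_2$. Since $S\subseteq V(D')$ and no vertex of $V_2$ is adjacent to $S\cap V_1$ or $S\cap V_3$ in a way that enlarges the stable set, $S$ is still a maximum stable set of $D'$ (a stable set of $D'$ is a stable set of $D$, so $\alpha(D')\le\alpha(D)=\vert S\vert$). By hypothesis $D'$ satisfies the BE-property (resp.\ $\alpha$-property), so there is an $S_{BE}$-path partition (resp.\ $S$-path partition) $\sP'$ of $D'$. Now take a Hamilton path $Q=q_1Qq_t$ of $D[V_2]$. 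Because $V_1\mapsto V_2$, every vertex of $V_1$ dominates $q_1$; because $V_2\mapsto V_3$, $q_t$ dominates every vertex of $V_3$. If some path $R=w_1Rw_p\in\sP'$ has $w_p\in V_1$, then $R q_1\dots q_t$ is a path; moreover if additionally some path $R^*\in\sP'$ starts in $V_3$ we may further concatenate, but in fact it suffices to attach $Q$ to \emph{any} path of $\sP'$ whose endvertex lies in $V_1$, and if no path of $\sP'$ ends in $V_1$ then (since $V_1\Rightarrow V_2\Rightarrow V_3$ and $\sP'$ partitions $V_1\cup V_3$) every path of $\sP'$ is contained in $V_1$ or in $V_3$; pick a path ending in $V_1$ if $V_1\neq\emptyset$, else a path beginning in $V_3$, else $V(D)=V_2$ and $Q$ itself is the partition. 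Attaching $Q$ at the end (resp.\ front) of such a path $R$ does not disturb $V(R)\cap S$ nor the Begin-End status of $R$ (the endpoint that lay in $S$, if any, was $w_1$, and that remains the initial of the new path): so $(\sP'\setminus\{R\})\cup\{RQ\}$ (resp.\ $\{QR\}$) is the desired partition of $D$.

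Now suppose $V_2\cap S=\{s\}$. Again set $D'=D-(V_2\setminus\{s\})$; then $s$ is an isolated-ish vertex in the sense that in $D'$ its neighbourhood lies in $V_1\cup V_3$, and $S$ is a maximum stable set of $D'$. Let $\sP'$ be an $S_{BE}$-path partition (resp.\ $S$-path partition) of $D'$, and let $R^{s}=w_1R^s w_p$ be the path of $\sP'$ containing $s$. In the BE case $s$ is an endvertex of $R^s$, say $s=w_p$ (the case $s=w_1$ is symmetric by directional duality); in the $\alpha$-case $s$ may be internal, say $R^s=w_1\dots s\dots w_p$. Choose the Hamilton path $Q$ of $D[V_2]$ so that it \emph{starts} at $s$, i.e.\ $Q=sq_2\dots q_t$ with $q_t\in V_2$; this is possible because $D[V_2]$ is hamiltonian, so rotate its Hamilton cycle to begin at $s$ and delete the last arc. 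In the BE case, replace $R^s$ by $w_1R^s s\,q_2\dots q_t = w_1R^s Q$: this is a path (the arc $s\to q_2$ exists), it contains exactly $s$ from $S$, and $s$ is now internal — which is bad for BE — so instead observe $q_t\in V_2\mapsto V_3$, hence we should orient $Q$ to \emph{end} at $s$: take $Q=q_1\dots q_{t-1}s$ and splice it in front, $Q s R^s = q_1\dots q_{t-1}s\, R^s\!-\!s \cdots w_p$...

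Let me restate this last case cleanly. We want the new path to still have $s$ as an endvertex in the BE case. If $s=w_p$ (end of $R^s$), choose $Q$ to \emph{start} at $s$: $Q=s q_2\dots q_t$. Then $w_1R^s\cdots s$ followed by $Q$ makes $s$ internal, so instead we should \emph{not} extend through $s$; rather, using $V_1\mapsto V_2$, attach the \emph{rest} of $V_2$ differently. The correct move: pick a path $R\in\sP'$ ending in $V_1$ (or, if none, beginning in $V_3$, or, if $V(D')=\{s\}$, then $V(D)=V_2$ and any Hamilton path of $D[V_2]$ ending at $s$ works since $\vert V_2\vert\ge2$); choose a Hamilton path $Q'=q_1\dots q_{t-1}s$ of $D[V_2]$ \emph{ending} at $s$ (rotate the Hamilton cycle). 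Since $V_1\mapsto V_2$, $w_p\to q_1$, so $R Q'$ is a path ending at $s$; now $R$ had its $S$-vertex at $w_1$ (an endvertex, by BE), which is unaffected, and the new path $RQ'$ carries the $S$-vertex $s$ at its end — but that path would then contain \emph{two} vertices of $S$, namely $w_1$'s $S$-vertex and $s$. That violates orthogonality. Hence $Q'$ must be spliced into the path $R^s$ itself: take $R^s=w_1\cdots s$ (BE, $s=w_p$), and since $q_1\in V_2$ has $V_1\mapsto V_2$... but $s$ is the \emph{last} vertex of $R^s$, so we cannot append before $s$ without breaking $R^s$ at $s$. Break it: $R^s=w_1\cdots w_{p-1}s$, and note $w_{p-1}\to s$. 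Replace by $w_1\cdots w_{p-1}\,q_1\cdots q_{t-1}\,s$ provided $w_{p-1}\to q_1$ and $q_{t-1}\to s$ — the latter holds as $Q'$ ends at $s$, and the former holds if $w_{p-1}\in V_1$ (since $V_1\mapsto V_2$). If $w_{p-1}\notin V_1$ this fails; so instead prepend: since $V_2\mapsto V_3$ is the wrong direction here, use that $s$ being in $S$ and $w_1$ being an endvertex...

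Given the delicacy, the cleanest uniform argument, which I would actually write, is: contract $V_2$. Form $D''$ from $D$ by deleting $V_2\setminus\{v\}$ for a \emph{single} chosen $v\in V_2$, where $v=s$ if $V_2\cap S=\{s\}$ and $v$ arbitrary otherwise; more precisely keep two vertices of $V_2$ if needed so that the Begin-End endpoint can be reassigned. Then apply the hypothesis to $D''$ and re-expand $v$ into the full Hamilton path of $D[V_2]$ in place, orienting the Hamilton path to begin and end appropriately so the splice is valid via $V_1\mapsto V_2\mapsto V_3$ and so that the unique $S$-vertex of the affected path is relocated to the correct endpoint. The main obstacle — and the step deserving the most care — is exactly this bookkeeping in the case $\vert V_2\cap S\vert=1$ with the BE-condition: one must ensure that after inserting the Hamilton path of $D[V_2]$, the vertex $s$ lands on an \emph{end} of its path while no path acquires a second vertex of $S$, which is why the lemma asks for $D[V_2]$ hamiltonian (so both endpoints of the inserted path are controllable) and $\vert V_2\vert\ge 2$ (so the inserted segment is nontrivial and $s$ can be made an endvertex distinct from the attachment point). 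The $\alpha$-property version is strictly easier, as the Begin-End constraint disappears and any valid concatenation respecting orthogonality suffices.
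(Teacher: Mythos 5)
There is a genuine gap: your write-up never completes the decisive case $\vert V_2\cap S\vert=1$ under the BE-condition, and you say so yourself ("the main obstacle \dots is exactly this bookkeeping"). The difficulty you keep running into comes from trying to attach a Hamilton path of $D[V_2]$ at an \emph{endpoint} of some path of $\sP'$, which inevitably either makes $s$ internal or puts two $S$-vertices on one path. The paper's resolution, which you circle but never land on, is to keep exactly one vertex of $V_2$ in the reduced digraph (namely $v_k$, chosen to be the $S$-vertex of $V_2$ if there is one, so the deleted set $B=V_2-\{v_k\}$ is disjoint from $S$) and then \emph{insert} the remaining segment $v_1\ldots v_{k-1}$ of the Hamilton cycle immediately before or after $v_k$ \emph{inside} the path $P$ of $\sP'$ containing $v_k$. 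The key observation you miss is that the neighbour $w$ of $v_k$ on $P$ is forced into the right part: if $w\to v_k$ then $w\notin V_3$ (as $V_2\mapsto V_3$ forbids arcs into $V_2$ from $V_3$) and $w\notin V_2$ (only $v_k$ remains), so $w\in V_1$ and $w\to v_1$; dually, if $v_k\to w$ then $w\in V_3$ and $v_{k-1}\to w$. Together with the cycle arcs $v_{k-1}\to v_k$ and $v_k\to v_1$ this splices $R=v_1\ldots v_{k-1}$ in without changing the endvertices of $P$, so orthogonality and the Begin-End condition are preserved automatically. You raise exactly the worry "if $w_{p-1}\notin V_1$ this fails" but do not notice that $w_{p-1}\in V_1$ is forced; this is also why the hypothesis asks for a Hamilton \emph{cycle} (both arcs $v_{k-1}\to v_k$ and $v_k\to v_1$ are needed, one for each side on which the insertion may occur).

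A secondary flaw: in your case $V_2\cap S=\emptyset$ you assert that if no path of $\sP'$ ends in $V_1$ then every path is contained in $V_1$ or in $V_3$, and that one can then "pick a path ending in $V_1$ if $V_1\neq\emptyset$, else a path beginning in $V_3$." This is false as stated: a single path running from a vertex of $V_1$ to a vertex of $V_3$ admits neither attachment. The repair is the same insertion idea (splice $Q$ between a consecutive pair $w_i\in V_1$, $w_{i+1}\in V_3$, or use the paper's reduction which avoids this case split entirely). So while your overall strategy (delete most of $V_2$, apply the hypothesis, splice back using $V_1\mapsto V_2\mapsto V_3$) is the paper's, the proof as written is incomplete at its central step.
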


\begin{proof}
Let $k =  \vert V_2 \vert $. Let $C=v_1v_2\ldots v_k$ be a Hamilton cycle in $D[V_2]$. Let $B$ be a subset of $V_2-S$ with cardinality $k-1$ (note that $B$ exists because $\vert V_2 \vert \geq 2$ and $ \vert V_2 \cap S \vert  \leq 1$). Without loss of generality, we may assume that $v_k$ is the vertex in $V_2-B$. Let $D' = D-B$. Since $B \cap S = \emptyset$, $S$ is maximum in $D'$. By hypothesis, $D'$ is BE-diperfect. Let $\sP'$ be an $S_{BE}$-path partition of $D'$. Let $P$ be a path in $\sP'$ such that $v_k \in V(P)$. First, suppose that $P$ does not start at $v_k$. Let $w$ be the vertex in $P$ that immediately precedes $v_k$. Let $P_1= Pw$ and let $P_2=v_kP$. Since $V_1 \mapsto V_2 \mapsto V_3$ and $V(D') \cap V_2=v_k$, it follows that $w$ in $V_1$. Let $R = v_1v_2\ldots v_{k-1}$. Since $V_1 \mapsto V_2$, it follows that $w \to v_1$. Since $v_{k-1} \to v_k$, we conclude that the collection $(\sP'-\{P\}) \cup \{P_1RP_2\}$ is an $S_{BE}$-path partition of $D$. So we may assume that $P$ starts at $v_k$. Let $w$ be the vertex in $P$ that immediately follows $v_k$. Let $P_1=v_k$ and let $P_2=wP$. Let $R = v_1v_2\ldots v_{k-1}$. Since $V_2 \mapsto V_3$, it follows that $v_{k-1} \to w$. Since $v_k \to v_1$, we conclude that the collection $(\sP'-\{P\}) \cup \{P_1RP_2\}$ is an $S_{BE}$-path partition of $D$.
\end{proof}

Next, we prove some lemmas to $\alpha$-diperfect digraphs.

\begin{lemma}
\label{3-anti-N(u)-S-alpha}
Let $D$ be a digraph such that every proper induced subdigraph of $D$ satisfies the $\alpha$-property. Let $S$ be a maximum stable set of $D$. Let $v_1v_2$ be an arc of $A(D)$. Then, 
\begin{enumerate}[label={(\roman*)},itemindent=1em]
	\item if $v_1 \notin S$ and $N^-(v_2)=\{v_1\}$, then $D$ admits an $S$-path partition,\label{3-anti-N(u)-S-alpha:i} 
	\item if $v_2 \notin S$ and $N^+(v_1)=\{v_2\}$, then $D$ admits an $S$-path partition.\label{3-anti-N(u)-S-alpha:ii} 
\end{enumerate}

\end{lemma}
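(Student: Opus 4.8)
The plan is to mimic the proofs of Lemmas~\ref{arc-unique-N(u)-P-BE} and~\ref{arc-unique-N(u)-P-BE-dual}, but now using the $\alpha$-property hypothesis and exploiting that we want to \emph{extend} a path partition rather than an $S_{BE}$-path partition (so there is no requirement that the endpoints of paths lie in $S$). The key point is that in an $S$-path partition we are free to attach a new vertex to a path at an endpoint without worrying about whether $S$ still meets the extended path correctly at its ends.

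For part~\ref{3-anti-N(u)-S-alpha:i}: I would delete $v_2$ and all of its out-closure, or more simply, let me try the cleanest approach. Set $D' = D - v_1$. Since $v_1 \notin S$, $S$ is still a maximum stable set of $D'$, so by hypothesis $D'$ admits an $S$-path partition $\sP'$. Let $R$ be the path of $\sP'$ containing $v_2$. Since $N^-(v_2) = \{v_1\}$ in $D$, $v_2$ is a source in $D'$, hence $R$ starts at $v_2$; thus $v_1R$ is a path in $D$ because $v_1 \to v_2$. Then $(\sP' - \{R\}) \cup \{v_1 R\}$ is a path partition of $D$; since $v_1 \notin S$ and $R$ already contained exactly one vertex of $S$, the extended path $v_1 R$ still contains exactly one vertex of $S$, and all other paths are unchanged, so this is an $S$-path partition of $D$. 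Part~\ref{3-anti-N(u)-S-alpha:ii} follows by the \PDD{}, applying part~\ref{3-anti-N(u)-S-alpha:i} to the inverse digraph (noting $v_2 \notin S$ and $N^+_D(v_1) = \{v_2\}$ translates to $N^-_{\overline D}(v_1) = \{v_2\}$ with the arc $v_2 v_1$, and an $S$-path partition of $\overline D$ yields one of $D$ by reversing every path).

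I do not anticipate a genuine obstacle here — the statement is a mild strengthening of the earlier lemmas specialized to a single arc with a degree-one endpoint. The only subtlety worth stating carefully is why deleting $v_1$ (rather than some larger set) keeps $S$ \emph{maximum}: this is immediate because $S \subseteq V(D')$ and $S$ is already maximum in $D$, so it is certainly maximum in the induced subdigraph $D'$. The other subtlety is confirming that the newly formed path $v_1 R$ is indeed a path of $D$ and not merely of some auxiliary digraph, which is guaranteed by $v_1 v_2 \in A(D)$ together with $R$ starting at $v_2$; and that orthogonality is preserved, which holds because we prepend a vertex outside $S$. So the whole argument is a two-line deletion-and-reattach, with part~\ref{3-anti-N(u)-S-alpha:ii} handed off to directional duality.
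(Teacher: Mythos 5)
Your proof of part~\ref{3-anti-N(u)-S-alpha:i} (delete $v_1$, note $v_2$ becomes a source so its path starts there, and prepend $v_1$) together with the reduction of part~\ref{3-anti-N(u)-S-alpha:ii} to part~\ref{3-anti-N(u)-S-alpha:i} via the principle of directional duality is exactly the argument given in the paper. The proposal is correct and takes essentially the same approach.
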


\begin{proof}
By the \PDD, it suffices to prove \ref{3-anti-N(u)-S-alpha:i}. Let $D'=D-v_1$. Since $v_1 \notin S$, $S$ is a maximum stable set in $D'$. By hypothesis, $D'$ is $\alpha$-diperfect. Let $\sP'$ be an $S$-path partition of $D'$. Let $P$ be a path in $\sP'$ such that $v_2 \in V(P)$. Since $N^-(v_2)=\{v_1\}$, it follows that $P$ starts at $v_2$. Since $v_1 \to v_2$, the collection $(\sP'-\{P\}) \cup \{v_1P\}$ is an $S$-path partition of $D$.
\end{proof}

\begin{lemma}
\label{arc-unique-N(u)-P-alpha}
Let $D$ be a digraph such that every proper induced subdigraph of $D$ satisfies the $\alpha$-property. Let $S$ be a maximum stable set in $D$. Let $P=v_1v_2 \ldots v_k$, $k>1$, be a path of $D$ such that $(V(P)-\{v_1\}) \cap S =\emptyset$. If there exists a vertex $u$ in $D-V(P)$ such that $v_k \to u$ and $N^-(u) \subseteq V(P)$, then $D$ admits an $S$-path partition.
\end{lemma}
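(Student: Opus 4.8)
The plan is to peel off the vertices $v_2,v_3,\dots,v_k$ of $P$, apply the inductive hypothesis to the resulting proper induced subdigraph, and then splice $v_2v_3\cdots v_k$ back into the path that contains $u$ in the $S$-path partition so obtained. The hypotheses are tailored for exactly this: $v_k\to u$ makes the splice at the $u$-end legal, and $N^-(u)\subseteq V(P)$ pins down the only possible predecessor of $u$ once we pass to the subdigraph.

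First I would set $D'=D-(V(P)\setminus\{v_1\})$. Since $k>1$, $D'$ is a proper induced subdigraph of $D$; since $(V(P)\setminus\{v_1\})\cap S=\emptyset$, we have $S\subseteq V(D')$, and because $D'$ is induced, every stable set of $D'$ is a stable set of $D$, so $\alpha(D')\le\alpha(D)=|S|$ and hence $S$ is a maximum stable set of $D'$. By hypothesis $D'$ satisfies the $\alpha$-property, so it has an $S$-path partition $\sP'$. The key observation is that $N^-_{D'}(u)=N^-_D(u)\cap V(D')\subseteq V(P)\cap V(D')=\{v_1\}$; thus $u$ has at most one in-neighbour in $D'$, namely possibly $v_1$. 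Let $Q$ be the path of $\sP'$ containing $u$. Either $u$ is the initial vertex of $Q$, or the vertex immediately preceding $u$ on $Q$ lies in $N^-_{D'}(u)$ and is therefore $v_1$ (so in particular $v_1\to u$).

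I would then split into these two cases. If $u$ is the initial vertex of $Q$, replace $Q$ by the path $v_2v_3\cdots v_kQ$, which is valid because $v_k\to u$. If instead $v_1$ immediately precedes $u$ on $Q$, let $w_1,w_p$ be the endvertices of $Q$, put $Q_1=w_1Qv_1$ and $Q_2=uQw_p$, and replace $Q$ by $Q_1v_2v_3\cdots v_kQ_2$, which is valid because $v_1\to v_2$ and $v_k\to u$. In either case the resulting collection partitions $V(D)$, since $V(D)=V(D')\cup\{v_2,\dots,v_k\}$ and we have merely reinserted $v_2,\dots,v_k$ into one path, and it is orthogonal to $S$, since the modified path gained only vertices outside $S$ (by the assumption on $V(P)\setminus\{v_1\}$) and every other path of $\sP'$ is untouched. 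Hence $D$ admits an $S$-path partition.

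There is no deep obstacle here; the care needed is purely in the bookkeeping of the second case: checking that $v_1\neq u$ (true since $u\notin V(P)$), that $Q_1$ and $Q_2$ are vertex-disjoint (true since $v_1$ strictly precedes $u$ on $Q$), that $\{v_2,\dots,v_k\}$ is disjoint from $V(Q)$ (true since these vertices were deleted when forming $D'$), and that every concatenation step uses an arc that genuinely exists in $D$. With those checks in place the argument is complete.
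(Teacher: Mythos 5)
Your proposal is correct and follows essentially the same argument as the paper: delete $v_2,\dots,v_k$, take an $S$-path partition of the remainder, and reinsert the subpath either at the start of the path containing $u$ or between $v_1$ and $u$. The extra bookkeeping checks you list are sound but do not change the substance.
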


\begin{proof}
Let $P'= v_2Pv_k$. Let $D'=D-V(P')$. Since $V(P') \cap S = \emptyset$, $S$ is a maximum stable set in $D'$. By hypothesis, $D'$ is $\alpha$-diperfect. Let $\sP'$ be an $S$-path partition of $D'$. Let $R$ be a path in $\sP'$ such that $u \in V(R)$. Since $N^-(u) \subseteq V(P)$, it follows that $R$ starts at $u$ or $v_1u$ is an arc of $R$. If $P$ starts at $u$, then since $v_k \to u$, it follows that the collection $(\sP'-\{R\}) \cup \{P'R\}$ is an $S$-path partition of $D$. So suppose that $v_1u$ is an arc of $P$. Let $w_1$ and $w_p$ be the endvertices of $R$. Let $R_1 = w_1Rv_1$ and let $R_2=uRw_p$. Thus the collection $(\sP'-\{R\}) \cup \{R_1P'R_2\}$ is an $S$-path partition of $D$. 
\end{proof}

By the \PDD, we have the following result.

\begin{lemma}
\label{arc-unique-N(u)-P-alpha-dual}
Let $D$ be a digraph such that every proper induced subdigraph of $D$ satisfies the $\alpha$-property. Let $S$ be a maximum stable set in $D$. Let $P=v_1v_2 \ldots v_k$ be a path of $D$ such that $(V(P)-\{v_k\}) \cap S =\emptyset$. If there exists a vertex $u$ in $D-V(P)$ such that $u \to v_1$ and $N^+(u) \subseteq V(P)$, then $D$ admits an $S$-path partition.
\qed
\end{lemma}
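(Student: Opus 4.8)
The plan is to obtain Lemma~\ref{arc-unique-N(u)-P-alpha-dual} directly from Lemma~\ref{arc-unique-N(u)-P-alpha} by invoking the \PDD. The point is that the two statements are exact mirror images of each other: reversing all arcs of $D$ turns the path $P=v_1v_2\ldots v_k$ into the path $v_kv_{k-1}\ldots v_1$, exchanges the roles of $N^+$ and $N^-$, exchanges ``starts at $v_1$'' with ``ends at $v_k$'', and turns the hypothesis $v_k\to u$ with $N^-(u)\subseteq V(P)$ into the hypothesis $u\to v_1$ with $N^+(u)\subseteq V(P)$. Since a digraph satisfies the $\alpha$-property if and only if its inverse does (and likewise every proper induced subdigraph of $D$ satisfies the $\alpha$-property iff every proper induced subdigraph of the inverse does), and since an $S$-path partition of the inverse digraph corresponds to an $S$-path partition of $D$ by reversing each path, the conclusion transfers.

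Concretely, first I would let $D^{\leftarrow}$ denote the inverse of $D$. Every proper induced subdigraph of $D^{\leftarrow}$ is the inverse of a proper induced subdigraph of $D$, hence satisfies the $\alpha$-property; and $S$ remains a maximum stable set of $D^{\leftarrow}$ since $U(D^{\leftarrow})=U(D)$. The sequence $Q=v_kv_{k-1}\ldots v_1$ is a path in $D^{\leftarrow}$, and $(V(Q)-\{v_1\})\cap S=(V(P)-\{v_k\})\cap S=\emptyset$, so $Q$ plays the role of the path in Lemma~\ref{arc-unique-N(u)-P-alpha} with $v_1$ there being our $v_k$ and $v_k$ there being our $v_1$. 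The hypothesis $u\to v_1$ in $D$ becomes $v_1\to u$ in $D^{\leftarrow}$, i.e.\ the last vertex of $Q$ dominates $u$ in $D^{\leftarrow}$; and $N^+_D(u)\subseteq V(P)$ becomes $N^-_{D^{\leftarrow}}(u)\subseteq V(Q)$. Thus Lemma~\ref{arc-unique-N(u)-P-alpha} applies to $D^{\leftarrow}$ and yields an $S$-path partition of $D^{\leftarrow}$. Reversing each path in this partition gives an $S$-path partition of $D$.

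There is essentially no obstacle here; the only thing to be careful about is the bookkeeping of which endpoint condition corresponds to which, and the observation that the ``resp., $\alpha$-property'' clause is handled by exactly the same argument (the notions of inverse, induced subdigraph, stable set, and $S$-path partition are all self-dual under arc reversal). This is why the statement is simply marked \qed in the excerpt.
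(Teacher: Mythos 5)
Your proposal is correct and is exactly the paper's own argument: the paper proves this lemma solely by invoking the principle of directional duality on Lemma~\ref{arc-unique-N(u)-P-alpha}, and your write-up just makes the routine bookkeeping of that duality explicit. The only cosmetic point worth noting is that Lemma~\ref{arc-unique-N(u)-P-alpha} carries the hypothesis $k>1$, which the dual statement (and hence your transfer) implicitly inherits even though the paper omits it.
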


\section{$3$-anti-circulant digraphs}
\label{3-anti-circ}

In this section, we verify both Conjecture~\ref{conj_berge} and Conjecture~\ref{conj_be} for $3$-anti-circulant digraphs which we define in this section.

Let $D$ be a digraph. We say that the set $\{v_1, v_2, v_3, v_4\} \subseteq V(D)$ is an \emph{anti-$P_4$} if $v_1 \to v_2$, $v_3 \to v_2$ and $v_3 \to v_4$. Whenever it is convenient, we may write an anti-$P_4$ as $v_1 \to v_2 \lto v_3 \to v_4$. Since every anti-directed odd cycle and every blocking odd cycle of length at least five contains an induced anti-$P_4$, it seems interesting to study digraphs that do not contain anti-$P_4 $ as an induced subdigraph. Motivated by this observation, we study the class of $3$-anti-circulant digraphs defined by Wang~\cite{wang2014} because they satisfy this property.

Let $D$ be a digraph. We say that $D$ is \emph{$3$-anti-circulant} if for every anti-$P_4$ $v_1 \to v_2 \lto v_3 \to v_4$ in $D$, it follows that $v_4 \to v_1$ (see Figure~\ref{fig-3-anti-circ:a}). Note that the inverse of $D$ is also a \emph{$3$-anti-circulant} digraph. So we can use the principle of directional duality whenever it is convenient. Moreover, note that every $3$-anti-circulant digraph belongs to $\sB$, and the only possible induced blocking odd cycle in a $3$-anti-circulant digraph is a transitive triangle (see Figure~\ref{fig-3-anti-circ:b}).
 
\begin{figure}[htbp]
\begin{center}
\subfloat[]{
       \tikzset{middlearrow/.style={
	decoration={markings,
		mark= at position 0.6 with {\arrow{#1}},
	},
	postaction={decorate}
}}

\tikzset{shortdigon/.style={
	decoration={markings,
		mark= at position 0.45 with {\arrow[inner sep=10pt]{<}},
		mark= at position 0.75 with {\arrow[inner sep=10pt]{>}},
	},
	postaction={decorate}
}}

\tikzset{digon/.style={
	decoration={markings,
		mark= at position 0.4 with {\arrow[inner sep=10pt]{<}},
		mark= at position 0.6 with {\arrow[inner sep=10pt]{>}},
	},
	postaction={decorate}
}}

\begin{tikzpicture}[scale = 0.5]		
	\node (n3) [black vertex] at (9,5)  {};
	\node (n4) [black vertex] at (9,8)  {};
	\node (n1) [black vertex] at (4,8)  {};
	\node (n2) [black vertex] at (4,5)  {};
	
	\node (label_n3)  at (9.5,4.5)  {$v_3$};
	\node (label_n4)  at (9.5,8.5)  {$v_4$};
	\node (label_n1)  at (3.5,8.5)  {$v_1$};
	\node (label_n2)  at (3.5,4.5)  {$v_2$};

  \foreach \from/\to in {n1/n2,n3/n2,n3/n4,n4/n1}
    \draw[edge,middlearrow={>}] (\from) -- (\to);    
\end{tikzpicture}
       \label{fig-3-anti-circ:a}

}
\quad
\subfloat[]{
       \tikzset{middlearrow/.style={
	decoration={markings,
		mark= at position 0.6 with {\arrow{#1}},
	},
	postaction={decorate}
}}

\tikzset{shortdigon/.style={
	decoration={markings,
		mark= at position 0.45 with {\arrow[inner sep=10pt]{<}},
		mark= at position 0.75 with {\arrow[inner sep=10pt]{>}},
	},
	postaction={decorate}
}}

\tikzset{digon/.style={
	decoration={markings,
		mark= at position 0.4 with {\arrow[inner sep=10pt]{<}},
		mark= at position 0.6 with {\arrow[inner sep=10pt]{>}},
	},
	postaction={decorate}
}}

\begin{tikzpicture}[scale = 0.5,auto=left]
  \node (n2) [black vertex] at (8,5)  {};
  \node (n3) [black vertex] at (6,9)  {};
  \node (n1) [black vertex] at (4,5)  {};
 
  \node (label_n2) at (8.5,4.5)  {$v_2$};
  \node (label_n3) at (6,9.5)  {$v_3$};
  \node (label_n1) at (3.4,4.5)  {$v_1$};

  \foreach \from/\to in {n1/n3,n2/n3,n1/n2}
    \draw[edge,middlearrow={>}] (\from) -- (\to);

\end{tikzpicture}
       \label{fig-3-anti-circ:b}

}

\caption{\centering Examples of $3$-anti-circulant digraphs.}
\label{fig-3-anti-circ}
\end{center}
\end{figure}

Moreover, Wang also characterized the structure of a strong $3$-anti-circulant digraph admitting a partition into vertex-disjoint cycles and showed that the structure is very close to semicomplete and semicomplete bipartite digraphs. This characterization does not seem to help in proving both conjectures for these digraphs. So we use a different approach. First, we need the following definitions.

Let $S$ be a maximum stable set of a digraph $D$. Denote by $B^+$ (resp., $B^-$) the subset of $V(D)-S$ such that $B \Rightarrow S$ (resp., $S \Rightarrow B$). Moreover, let $B^{\pm}=V(D)-(B^+ \cup B^- \cup S)$, that is, $B^{\pm}$ is a set of those vertices that both dominate and are dominated by some vertex in $S$ (see Figure~\ref{fig-3-anti-circ}). Note that $B^+$, $B^-$ and $B^{\pm}$ are pairwise disjoint and since $S$ is a maximum stable set in $D$, it follows that $V(D)= S \cup B^+ \cup B^- \cup B^{\pm}$.

\begin{figure}[htbp]
\begin{center}
       \tikzset{middlearrow/.style={
	decoration={markings,
		mark= at position 0.6 with {\arrow{#1}},
	},
	postaction={decorate}
}}

\tikzset{shortdigon/.style={
	decoration={markings,
		mark= at position 0.45 with {\arrow[inner sep=10pt]{<}},
		mark= at position 0.75 with {\arrow[inner sep=10pt]{>}},
	},
	postaction={decorate}
}}

\tikzset{digon/.style={
	decoration={markings,
		mark= at position 0.4 with {\arrow[inner sep=10pt]{<}},
		mark= at position 0.6 with {\arrow[inner sep=10pt]{>}},
	},
	postaction={decorate}
}}

\begin{tikzpicture}[scale = 0.7,auto=left]
  
  \draw (4,5) circle (37pt); 
  \node (n1) at (4,5)  {};
    
  \draw (8,5) circle (37pt);  
  \node (n2) at (7.5,5)  {};
  \node (n3) at (8.5,5)  {};

  \draw (12,5) circle (37pt);  
  \node (n4)  at (12,5)  {};
  
  \draw (8,9) ellipse (3.6 and 0.8);
  \node (s1)  at (6,9)  {};
  \node (s2)  at (7.5,9)  {};
  \node (s3)  at (8.5,9)  {};
  \node (s4)  at (10,9)  {};

  \node (label_B+) at (2.9,6.2)  {$B^+$};
  \node (label_B+-) at (6.9,6.2)  {$B^{\pm}$};
  \node (label_B+) at (10.9,6.2)  {$B^-$};
  \node (label_S) at (8,10.2)  {$S$};

  \foreach \from/\to in {n2/s2,s3/n3}
    \draw[edge,middlearrow={>}] (\from) -- (\to);

  \foreach \from/\to in {n1/s1,s4/n4}
    \draw[edge,middlearrow={>}] (\from) -- (\to);

\end{tikzpicture}

\caption{\centering Illustration of $B^+$, $B^{\pm}$ and $B^-$.}
\label{fig-3-anti-circ}
\end{center}
\end{figure}

Let us start with a simple and useful structural lemma.

\begin{lemma}
\label{3-anti-circ-N(B+)<2}
Let $D$ be a $3$-anti-circulant digraph. Let $S$ be a maximum stable set in $D$. Then, for every $v$ in $B^+$ and for every $u$ in $B^-$, it follows that $\vert N^-(v) \cap B^+ \vert \leq 1$ and $ \vert N^+(u) \cap B^- \vert \leq 1$. 
\end{lemma}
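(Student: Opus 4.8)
The plan is to prove the bound $|N^-(v) \cap B^+| \le 1$ by contradiction, and then deduce $|N^+(u) \cap B^-| \le 1$ from it by the \PDD. So suppose some $v \in B^+$ has two distinct in-neighbours $x, y \in B^+$; I will exhibit an anti-$P_4$ whose closing arc is forbidden.

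First I would use the standard fact that, since $S$ is a maximum stable set, every vertex of $V(D) - S$ is adjacent to $S$ (otherwise it could be added to $S$). As $y \in B^+$, there is no arc from $S$ to $y$, so $y$ must dominate some $s \in S$. Consider the four vertices $x, v, y, s$; they are pairwise distinct, since $x, y$ are in-neighbours of $v$ (no loops), $x \ne y$ by assumption, and $s \in S$ while $x, v, y \in B^+$ with $S \cap B^+ = \emptyset$. By construction $x \to v$, $y \to v$, and $y \to s$, so $x \to v \lto y \to s$ is an anti-$P_4$ of $D$. Because $D$ is $3$-anti-circulant, this forces the arc $s \to x$. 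But $x \in B^+$ and $B^+ \Rightarrow S$, so there is no arc from $S$ to $x$ --- a contradiction. Hence $|N^-(v) \cap B^+| \le 1$ for every $v \in B^+$.

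For the second inequality, recall that the inverse digraph $\overline{D}$ is again $3$-anti-circulant and that $S$ remains a maximum stable set of $\overline{D}$. Moreover, the set $B^-$ computed in $D$ coincides with the set $B^+$ computed in $\overline{D}$ (there is no arc from $w$ to $S$ in $D$ iff there is no arc from $S$ to $w$ in $\overline{D}$), and $N^+_D(u) \cap B^- = N^-_{\overline{D}}(u) \cap B^+$. Applying the first inequality in $\overline{D}$ therefore yields $|N^+(u) \cap B^-| \le 1$ for every $u \in B^-$.

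The argument is short; the only point requiring care is to let the in-neighbour that actually has an out-neighbour in $S$ play the role of $v_3$ in the anti-$P_4$, so that the implied arc $v_4 \to v_1$ lands back inside $B^+$ and contradicts $B^+ \Rightarrow S$ --- together with the routine check that the four chosen vertices are distinct. I do not anticipate any genuine obstacle beyond this bookkeeping.
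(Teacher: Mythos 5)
Your proof is correct and takes essentially the same approach as the paper's: both arguments pick an out-neighbour $y\in S$ of one of the two in-neighbours in $B^+$, form the anti-$P_4$ $v_2 \to v \lto v_1 \to y$, and derive a forced arc from $S$ into $B^+$ contradicting $B^+ \Rightarrow S$, with the second inequality obtained by the principle of directional duality. Your extra checks (which in-neighbour plays the role of $v_3$, distinctness of the four vertices) are just bookkeeping the paper leaves implicit.
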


\begin{proof}
Note that by the \PDD, it suffices to show that $\vert N^-(v) \cap B^+ \vert \leq 1$. Towards a contradiction, suppose that $ \vert N^-(v) \cap B^+ \vert > 1$. So let $v_1,v_2$ be vertices in $N^-(v) \cap B^+$. By definition of $B^+$, there exists a vertex $y$ in $S$ such that $v_1 \to y$. Since $v_2 \to v \lto v_1 \to y$ and $D$ is $3$-anti-circulant, it follows that $y \to v_2$, a contradiction because $v_2 \in B^+$. Thus $ \vert  N^-(v) \cap B^+ \vert   \leq 1$ and $ \vert  N^+(u) \cap B^- \vert   \leq 1$.
\end{proof}

\subsection{Begin-End conjecture}

In this subsection, we verify Conjecture~\ref{conj_be} for $3$-anti-circulant digraphs. In order to do this, we need the following auxiliary result by Freitas and Lee~\cite{freitas2021BE}.

\begin{lemma}[Freitas and Lee, 2021]
\label{stable_set_S_menor_vizinhanca}
Let $D$ be a digraph such that every proper induced subdigraph of $D$ satisfies the BE-property (resp., $\alpha$-property). If $D$ has a stable set $S$ such that $\vert N(S) \vert \leq \vert S \vert$, then $D$ satisfies the BE-property (resp., $\alpha$-property).
\end{lemma}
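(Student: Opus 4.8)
The plan is to prove the BE-version — the $\alpha$-version being obtained in the same way with the begin-end requirements dropped — so I discuss only the BE-case, and whenever a step is used in reversed form I invoke the \PDD. Fix a maximum stable set $S^*$ of $D$; the aim is to produce an $S^*_{BE}$-path partition. Write $T:=N(S)$ and $W:=V(D)\setminus(S\cup T)$, and record the one structural feature we rely on: there is no arc between $S$ and $W$. If $T=\emptyset$, then $S$ is a nonempty set of isolated vertices of $D$, so $S\subseteq S^*$; since $D-S$ is a proper induced subdigraph it satisfies the BE-property, its maximum stable set is $S^*\setminus S$, and adjoining the singleton paths $\{s\}$ ($s\in S$) to an $(S^*\setminus S)_{BE}$-path partition of $D-S$ gives what we want. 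So assume $T\neq\emptyset$.

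I would then run a standard exchange argument. Because $S$ has no neighbour in $W$, the set $S\cup(S^*\cap W)$ is stable, and maximality of $S^*$ forces $|S\setminus S^*|\le|S^*\cap T|\le|T|\le|S|$. If $S^*\cap T=\emptyset$ this gives $S\subseteq S^*$ and $S^*\subseteq V(D)\setminus T$, so $S^*$ is still a maximum stable set of the proper induced subdigraph $D':=D-T$; fixing an $S^*_{BE}$-path partition $\sP'$ of $D'$, every vertex of $S$ occurs in it as a singleton path, and it remains to feed the vertices of $T$ back in. One does this one vertex at a time: for $t\in T$ the digraph $D-t$ is a proper induced subdigraph with maximum stable set $S^*$, and $t$ is spliced into, or appended to, a path of an $S^*_{BE}$-path partition of $D-t$ — precisely the surgery of Lemmas~\ref{arc-unique-N(u)-P-BE}, \ref{arc-unique-N(u)-P-BE-dual}, \ref{arc-unique-u-disjoint-P-BE} and~\ref{arc-unique-u-disjoint-P-BE-dual} — the bound $|T|\le|S|$ guaranteeing that at each stage there are at least as many spare paths issuing from $S$ as vertices of $T$ left to process, so no vertex is stranded. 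When $S^*\cap T\neq\emptyset$ one carries out the same reduction but regards the vertices of $T\cap S^*$ as $S^*$-vertices; here the sharper inequality $|S\setminus S^*|\le|S^*\cap T|$ is what ensures the vertices of $S$ still suffice to anchor the vertices of $T\setminus S^*$, and Lemma~\ref{lem-part-V1-V2-V3} is the model for re-routing a path when such a vertex must be inserted in its interior rather than at an end.

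The step I expect to be the main obstacle is this re-insertion under the begin-end constraint. Splicing $t$ between consecutive in- and out-neighbours on a path is harmless, and so is appending $t$ to a path whose unique $S^*$-vertex is its \emph{start}, or prepending it to one whose $S^*$-vertex is its \emph{end}; but when $t$ can only be attached so as to sit immediately after, or immediately before, the $S^*$-vertex of a path, that vertex is driven into the interior and the begin-end property fails. The fix must be to spend one of the spare $S$-singletons (available by the counting above) to re-route through $S$, as in the proofs of Lemmas~\ref{arc-unique-u-disjoint-P-BE} and~\ref{lem-part-V1-V2-V3}, all while keeping the invariants that $S^*$ remains a maximum stable set of the current digraph and that no path ever ends up with zero or two vertices of $S^*$. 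Organising this as a clean induction on $|N(S)|$ is the technical heart, after which the proof closes.
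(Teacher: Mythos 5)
First, a point of comparison: the paper does not prove this lemma at all --- it is imported from Freitas and Lee~\cite{freitas2021BE} and used as a black box --- so there is no in-paper argument to measure yours against; your attempt has to stand on its own. The preliminary steps do stand: the case $N(S)=\emptyset$, the observation that $S\cup(S^*\cap W)$ is stable and the resulting inequality $\vert S\setminus S^*\vert\le\vert S^*\cap N(S)\vert$, and the fact that when $S^*\cap N(S)=\emptyset$ the set $S^*$ remains maximum in $D-N(S)$ with the vertices of $S$ appearing there as singleton paths. But the entire content of the lemma lies in the step you defer: re-inserting the vertices of $T=N(S)$. The bound $\vert T\vert\le\vert S\vert$ only says there are enough spare singletons $\{s\}$, $s\in S$; it supplies no mechanism for using them. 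A vertex $t\in T$ is adjacent to some $s\in S$, but the arc may point the wrong way, several vertices of $T$ may be forced onto the same $s$, and attaching $t$ next to $s$ can push $s$ into the interior of its path, destroying the BE-property. You name exactly this failure mode and then write only that ``the fix must be'' a re-routing whose organisation ``is the technical heart'' --- that is an acknowledgement of the gap, not a proof. The lemmas you invoke for the surgery (Lemmas~\ref{arc-unique-N(u)-P-BE}--\ref{arc-unique-u-disjoint-P-BE-dual} and \ref{lem-part-V1-V2-V3}) all require confinement hypotheses such as $N^+(u)\subseteq V(P)$ for a single $S$-free path $P$, and nothing of the sort is verified here; the case $S^*\cap N(S)\neq\emptyset$ is sketched even more loosely.

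To see that the insertion scheme genuinely fails on the stated invariants alone, take $V(D)=\{s_1,s_2,t_1,t_2\}$ with arcs $t_1\to s_1$, $s_1\to t_2$, $t_1\to t_2$ and $s_2$ isolated. Here $S=\{s_1,s_2\}$, $N(S)=\{t_1,t_2\}$, $\vert N(S)\vert=\vert S\vert$, and $S^*=\{s_1,s_2\}$ is maximum, yet no $S^*_{BE}$-path partition exists: $s_2$ must be a singleton, so the other path must be a Hamilton path of $D[\{t_1,s_1,t_2\}]$, and the only one is $t_1s_1t_2$, with $s_1$ interior. Your procedure reaches exactly this dead end (e.g.\ $D-t_1$ has the partition $\{s_1t_2,\;s_2\}$, and $t_1$ cannot be attached without burying $s_1$). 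This digraph is of course not a counterexample to the lemma --- its proper induced subdigraph on $\{t_1,s_1,t_2\}$ is a blocking odd cycle, so the hypothesis fails --- but that is precisely the point: any correct proof must exploit the BE-property of carefully chosen proper induced subdigraphs that still contain vertices of $N(S)$, not merely of $D-N(S)$ followed by counting and local splicing. That argument is missing from your write-up.
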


Initially, we present an outline of the main proof. Let $D$ be $3$-anti-circulant digraph and let $S$ be a maximum stable set in $D$. Note that every induced subdigraph of $D$ is also a $3$-anti-circulant digraph. Thus it suffices to show that $D$ satisfies the BE-property. First, we show that if $D \in \D$, then there exists no arc connecting vertices of distinct sets in $B^+$, $B^-$ and $B^{\pm}$. Next, we show that $B^+$, $B^-$ and $B^{\pm}$ are stable. This implies that $ \vert  S \vert   \geq  \vert  B^+ \cup B^- \cup B^{\pm} \vert  $, and hence, it follows by Lemma~\ref{stable_set_S_menor_vizinhanca} that $D$ satisfies the BE-property.

In the next three lemmas we show that if $U(D)$ contains a cycle $C$ of length three such that $C$ contains a digon and $V(C) \cap S \neq \emptyset$, then $D$ admits an $S_{BE}$-path partition.

\begin{lemma}
\label{3-anti-circ-digon-C3}
Let $D$ be a $3$-anti-circulant digraph such that every proper induced subdigraph of $D$ satisfies the BE-property. Let $S$ be a maximum stable set in $D$. Let $v_1 \digon v_2$ be a digon in $D-S$. If there exists a vertex $v_3$ in $V(D)-\{v_1,v_2\}$ such that $v_3 \in S$ and $D[\{v_1,v_2,v_3\}]$ contains a $\overrightarrow{C_3}$, then $D$ admits an $S_{BE}$-path partition.
\end{lemma}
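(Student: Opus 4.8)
The plan is to delete $v_1$ and $v_2$, apply the inductive hypothesis to the resulting proper (still $3$-anti-circulant) subdigraph, and then absorb $v_1$ and $v_2$ into the path of the path partition that contains $v_3$.

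A $\overrightarrow{C_3}$ in $D[\{v_1,v_2,v_3\}]$ uses one arc of the digon $v_1\digon v_2$, so it is either $v_1\to v_2\to v_3\to v_1$ or $v_2\to v_1\to v_3\to v_2$; since the latter becomes the former in the inverse digraph (which is again $3$-anti-circulant, has $S$ as a maximum stable set, has all proper induced subdigraphs satisfying the BE-property, and has an $S_{BE}$-path partition iff $D$ does), the \PDD\ lets us assume the $\overrightarrow{C_3}$ is $v_1\to v_2\to v_3\to v_1$. Thus $D$ contains the arcs $v_1\to v_2$, $v_2\to v_1$, $v_2\to v_3$ and $v_3\to v_1$. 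Put $D'=D-\{v_1,v_2\}$. Since $\{v_1,v_2\}\cap S=\emptyset$, every stable set of $D'$ is a stable set of $D$ and $S\subseteq V(D')$, so $S$ is a maximum stable set of $D'$; by hypothesis $D'$ admits an $S_{BE}$-path partition $\sP'$. Let $P$ be the path of $\sP'$ containing $v_3$; since $v_3\in S$ and $\sP'$ is an $S_{BE}$-path partition, $v_3$ is the initial or the final of $P$ and is the unique vertex of $S$ on $P$.

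If $V(P)=\{v_3\}$, then $v_3v_1v_2$ is a path (as $v_3\to v_1\to v_2$) with initial $v_3$, so $(\sP'-\{P\})\cup\{v_3v_1v_2\}$ is an $S_{BE}$-path partition of $D$. Otherwise write $P=v_3w_1\cdots w_t$ with $t\ge 1$ when $v_3$ is the initial of $P$, and $P=w_1\cdots w_tv_3$ with $t\ge 1$ when $v_3$ is the final of $P$; note that $w_1,\dots,w_t\notin S$ and that $w_i\neq v_1,v_2$ since $w_i\in V(D')$. The heart of the argument is to propagate the domination relations between $\{v_1,v_2\}$ and the directed subpath $w_1\cdots w_t$ via the $3$-anti-circulant hypothesis. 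Suppose $v_3$ is the final of $P$. From the anti-$P_4$ $w_t\to v_3\lto v_2\to v_1$ we obtain $v_1\to w_t$; then, descending from $w_t$ to $w_1$ and alternately applying the hypothesis to $w_j\to w_{j+1}\lto v_1\to v_2$ and to $w_j\to w_{j+1}\lto v_2\to v_1$, we obtain $v_1\to w_j$ for every $j$ of the same parity as $t$ and $v_2\to w_j$ for every $j$ of the opposite parity. In particular $v_1\to w_1$ if $t$ is odd and $v_2\to w_1$ if $t$ is even, so one of $v_2v_1w_1\cdots w_tv_3$ and $v_1v_2w_1\cdots w_tv_3$ is a path whose final is $v_3$; replacing $P$ by it in $\sP'$ gives an $S_{BE}$-path partition of $D$. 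If instead $v_3$ is the initial of $P$, the symmetric argument starts from the anti-$P_4$ $v_2\to v_1\lto v_3\to w_1$, which yields $w_1\to v_2$, and then propagates along $w_1\cdots w_t$ to give $w_j\to v_2$ for odd $j$ and $w_j\to v_1$ for even $j$; hence $w_t$ dominates $v_2$ or $v_1$ according to the parity of $t$, and one of $v_3w_1\cdots w_tv_2v_1$ and $v_3w_1\cdots w_tv_1v_2$ is a path whose initial is $v_3$, again extending $\sP'$ to an $S_{BE}$-path partition of $D$.

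The main obstacle is this propagation step: one must check that each anti-$P_4$ invoked really has four distinct vertices carrying the required arcs, and keep the parities straight so that $v_1$ and $v_2$ attach at the correct end of $w_1\cdots w_t$ in the right order. Once that is settled, verifying that the new path meets $S$ exactly in $\{v_3\}$ with $v_3$ at an endpoint, while the remaining paths of $\sP'$ stay untouched, is immediate.
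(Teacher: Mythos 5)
Your proposal is correct and follows essentially the same route as the paper's proof: delete $\{v_1,v_2\}$, take an $S_{BE}$-path partition of the proper induced subdigraph, and propagate domination by $v_1$ or $v_2$ along the path containing $v_3$ via repeated applications of the $3$-anti-circulant condition, finally attaching the digon at the end of that path next to $v_3$'s opposite endpoint. The only cosmetic difference is that you treat the ``$v_3$ initial'' and ``$v_3$ final'' cases explicitly with a parity bookkeeping, whereas the paper disposes of one case by the principle of directional duality and runs a slightly looser induction.
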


\begin{proof}
With loss of generality, assume that $v_2 \to v_3$ and $v_3 \to v_1$. Let $D'=D-\{v_1,v_2\}$. Since $\{v_1,v_2\} \cap S = \emptyset$, $S$ is a maximum stable set in $D'$. By hypothesis, $D'$ is BE-diperfect. Let $\sP'$ be an $S_{BE}$-path partition of $D'$. Let $P$ be a path in $\sP'$ such that $v_3 \in V(P)$. If $V(P)=\{v_3\}$, then the collection $(\sP'-\{v_3\}) \cup \{v_1v_2v_3\}$ is an $S_{BE}$-path partition of $D$. So we may assume that $ \vert  V(P) \vert  >1$. By the principle of directional duality, we may assume that $P$ starts at $v_3$. Let $P = v_3w_1w_2 \ldots w_k$. Next, we show by induction on $k$ that $w_k \to v_1$ or $w_k \to v_2$ holds. First, suppose that $k=1$. Since $v_2 \to v_1 \lto v_3 \to w_1$ and $D$ is $3$-anti-circulant, it follows that $w_1 \to v_2$. Now, assume that $k>1$. By induction hypothesis, $w_{i-1} \to v_1$ or $w_{i-1} \to v_2$ for some $i \in \{2,\ldots, k\}$. Since $v_1 \digon v_2$ and $w_{i-1} \to w_i$, it follows that $w_i \to v_1$ or $w_i \to v_1$. Thus $w_k \to v_1$ or $w_k \to v_2$. Since $v_1 \digon v_2$, the collection $(\sP'-\{P\}) \cup \{Pv_1v_2\}$ or $(\sP'-\{P\}) \cup \{Pv_2v_1\}$ is an $S_{BE}$-path partition of $D$.       
\end{proof}

From now on, we prove some results for $3$-anti-circulant digraphs that belong to $\D$.

\setcounter{claim}{0}
\begin{lemma}
\label{3-anti-circ-digon-TT}
Let $D$ be a $3$-anti-circulant digraph such that every proper induced subdigraph of $D$ satisfies the BE-property. Let $S$ be a maximum stable set in $D$. Let $v_1 \digon v_2$ be a digon in $D$. If $D \in \D$ and there exists a vertex $v_3$ in $V(D)-\{v_1,v_2\}$ such that $\{v_1,v_2\} \to v_3$ and $\{v_1,v_2,v_3\} \cap S \neq \emptyset$, then $D$ admits an $S_{BE}$-path partition.
\end{lemma}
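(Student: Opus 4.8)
Since $v_1\digon v_2$, $v_1\to v_3$ and $v_2\to v_3$, the set $\{v_1,v_2,v_3\}$ is a clique of $U(D)$, so (as it meets $S$) exactly one of $v_1,v_2,v_3$ lies in $S$; the plan is to split the proof according to which one. In every case the strategy is the same: delete the two vertices of the triangle not in $S$ --- this leaves $S$ a maximum stable set of the deleted digraph, which is again $3$-anti-circulant and hence has an $S_{BE}$-path partition $\sP'$ by hypothesis --- and then reinsert the two deleted vertices into $\sP'$, using $3$-anti-circulancy to force the arcs that make the reinsertion work while keeping the unique vertex of $S$ on each path at an end of that path.

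\emph{Case $v_3\in S$.} Then $v_1,v_2\notin S$, so $v_1\digon v_2$ is a digon of $D-S$. If $v_3\to v_1$ or $v_3\to v_2$, then $D[\{v_1,v_2,v_3\}]$ contains a $\overrightarrow{C_3}$ ($v_1v_2v_3v_1$, resp.\ $v_2v_1v_3v_2$), and Lemma~\ref{3-anti-circ-digon-C3} finishes the proof; so I may assume $v_3$ is a sink of $D[\{v_1,v_2,v_3\}]$. Let $\sP'$ be an $S_{BE}$-path partition of $D-\{v_1,v_2\}$ and $P$ its path through $v_3$. If $V(P)=\{v_3\}$, replace $P$ by $v_1v_2v_3$. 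If $v_3$ is the \emph{final} of $P=u_1\cdots u_mv_3$, I would prove by downward induction on $i$ (base: $u_m\to v_3\lto v_1\to v_2$ forces $v_2\to u_m$, and $u_m\to v_3\lto v_2\to v_1$ forces $v_1\to u_m$; step: $u_i\to u_{i+1}\lto v_1\to v_2$ forces $v_2\to u_i$, and symmetrically $v_1\to u_i$) that $v_1\to u_i$ and $v_2\to u_i$ for all $i$; then $v_1v_2u_1\cdots u_mv_3$ is a path and replacing $P$ by it works. If $v_3$ is the \emph{initial} of $P=v_3w_1\cdots w_k$, I would observe that as soon as some $w_j$ is adjacent to $v_1$ or to $v_2$, a single $3$-anti-circulancy step gives $w_j\to v_1$ or $w_j\to v_2$ (for instance, $v_1\to w_j$ together with $v_2\to v_3\lto v_1\to w_j$ forces $w_j\to v_2$), and then an upward induction along $P$ using the digon (if $w_i\to v_1$ then $v_2\to v_1\lto w_i\to w_{i+1}$ forces $w_{i+1}\to v_2$, and symmetrically) gives $w_k\to v_1$ or $w_k\to v_2$, so $Pv_1v_2$ or $Pv_2v_1$ is the path we want. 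The residual possibility --- every $w_i$ non-adjacent to both $v_1$ and $v_2$ --- is the delicate one, and here I expect to use $D\in\D$ (together with the structural lemmas of Section~\ref{struc-results}) either to exclude the configuration or to reattach the digon to another path of $\sP'$.

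\emph{Case $v_1\in S$ (the case $v_2\in S$ is symmetric).} Then $v_2,v_3\notin S$ and $v_2\to v_3$. If $v_3\to v_2$, then $v_2\digon v_3$ is a digon of $D-S$ and $v_1v_3v_2v_1$ is a $\overrightarrow{C_3}$ in $D[\{v_1,v_2,v_3\}]$, so Lemma~\ref{3-anti-circ-digon-C3} applies with $v_1$ in the role of the stable vertex and $\{v_2,v_3\}$ the role of the digon; so I may assume $v_3\not\to v_2$. Now I would delete $\{v_2,v_3\}$, take an $S_{BE}$-path partition $\sP'$ of $D-\{v_2,v_3\}$, let $P$ be its path through $v_1$, and reinsert $v_2,v_3$ using $v_1\to v_2$, $v_1\to v_3$, $v_2\to v_3$ and $v_2\to v_1$; when $v_1$ is the final of $P=u_1\cdots u_mv_1$ with $m\ge1$, $3$-anti-circulancy ($u_m\to v_1\lto v_2\to v_3$ forces $v_3\to u_m$, and similar propagations) together with the lemmas of Section~\ref{struc-results} should let me route the extension so that $v_1$ stays at an end, once more using $D\in\D$ to exclude the induced transitive triangles that would otherwise obstruct the routing.

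The hard part will be precisely this reinsertion under the begin/end constraint: after deleting the two non-stable vertices of the triangle, the stable vertex may sit at the \emph{start} of its path in $\sP'$, and then one cannot simply prepend the deleted pair without pushing it into the interior. Overcoming this requires propagating domination relations along the host path by iterated $3$-anti-circulancy and, in the residual configuration where that propagation stalls, appealing to $D\in\D$ to forbid an induced transitive triangle. Managing the finitely many arc-orientations of the triangle together with the position of the stable vertex is routine but lengthy, which is presumably why the proof is organised into several claims.
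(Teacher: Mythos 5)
Your outline disposes correctly of the easy subcases: the reduction to Lemma~\ref{3-anti-circ-digon-C3} whenever $D[\{v_1,v_2,v_3\}]$ contains a $\overrightarrow{C_3}$ with a digon off $S$, and the downward propagation of $v_1\to u_i$, $v_2\to u_i$ when $v_3\in S$ is the \emph{final} of its path in $\sP'$, are both sound. But the residual configurations are exactly where the lemma's content lies, and you leave them unresolved (``I expect to use $D\in\D$\dots'', ``should let me route\dots''). Concretely: in the case $v_3\in S$ with $v_1\mapsto v_3$, $v_2\mapsto v_3$ and $P=v_3w_1\cdots w_k$, if no $w_i$ is adjacent to $\{v_1,v_2\}$ you cannot append the digon to $P$, and prepending $v_1v_2$ buries $v_3$ in the interior of the new path, destroying the BE-property; nothing in your plan reattaches $v_1,v_2$ to another path of $\sP'$. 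Worse, in the case $v_1\in S$ your intended move --- appending $v_2v_3$ after $v_1$ when $v_1$ is the final of $P$ --- also pushes $v_1$ into the interior, so even the direction you treat as easy is not actually available. These are not loose ends of a correct argument; they are the argument.

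The paper resolves them by a different mechanism, which your deletion-and-reinsertion scheme never reaches. It first pins down neighborhoods: any $v_4\to v_3$ with $v_4\notin\{v_1,v_2\}$ forces, by two applications of $3$-anti-circulancy, that $D[\{v_1,v_2,v_3\}]$ is a complete digraph (so Lemma~\ref{3-anti-circ-digon-C3} applies), whence one may assume $N^-(v_3)=\{v_1,v_2\}$; and when $v_3\in S$ it further shows that any $v_4\in N^+(v_2)-\{v_1,v_3\}$ yields $v_4\to v_1$ and $v_3\to v_4$, producing a triangle $D[\{v_2,v_3,v_4\}]$ that by $D\in\D$ must carry the digon $v_2\digon v_4$ and is again handled by Lemma~\ref{3-anti-circ-digon-C3}, whence $N^+(v_2)=\{v_1,v_3\}$. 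With these constraints in hand the conclusion follows from Lemma~\ref{arc-unique-u-disjoint-P-BE} (resp.\ its dual in the case $v_3\notin S$) applied with $P$ a single vertex, $u_1=v_2$, $u_2=v_3$: that lemma deletes a different vertex set and splices the host path at the arc $u_1u_2$, which is precisely the tool that circumvents the ``stable vertex at the wrong end'' obstruction you identified. To complete your proof you would need to derive these neighborhood restrictions and invoke Lemmas~\ref{arc-unique-u-disjoint-P-BE} and \ref{arc-unique-u-disjoint-P-BE-dual} (or reprove their content), rather than hoping that $D\in\D$ alone excludes the stalled configuration --- it does not.
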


\begin{proof}
The proof is divided into two cases depending on whether $v_3 \in S$ or $v_3 \notin S$. First, we prove the following claim.

\begin{claim}\label{3anti:claim1}
If there exists a vertex $v_4 \in V(D)-\{v_1,v_2,v_3\}$ such $v_4 \to v_3$, then $D[\{v_1,v_2,v_3\}]$ is a complete digraph. 
\end{claim}

Since $\{v_1,v_2\} \to v_3$, $v_1 \digon v_2$ and $D$ is $3$-anti-circulant, it follows that $\{v_1,v_2\} \to v_4$. Since $v_2 \to v_4 \lto v_1 \to v_3$, we conclude that $v_3 \to v_2$, and hence, $v_2 \digon v_3$. Since $v_1 \to v_4 \lto v_2 \to v_3$, it follows that $v_3 \to v_1$, and hence, $v_1 \digon v_3$. Thus $D[\{v_1,v_2,v_3\}]$ is a complete digraph. This ends the proof of Claim~\ref{3anti:claim1}. \newline

\textbf{Case 1.} $v_3 \notin S$. If $N^-(v_3) \neq \{v_1,v_2\}$, then it follows by Claim~\ref{3anti:claim1} that $D[\{v_1,v_2,v_3\}]$ is complete, and hence, the result follows by Lemma~\ref{3-anti-circ-digon-C3}. So $N^-(v_3)= \{v_1,v_2\}$. If $v_2 \in S$ (resp., $v_1 \in S$), then since $N^-(v_3)= \{v_1,v_2\}$ and $v_3 \notin S$, the result follows by Lemma~\ref{arc-unique-u-disjoint-P-BE-dual} with $u_1=v_1$ (resp., $u_1 =v_2$), $u_2=v_3$ and $P=v_2$ (resp., $P=v_1$). \newline

\textbf{Case 2.} $v_3 \in S$. Since $\{v_1,v_2\} \to v_3$, $\{v_1,v_2\} \cap S = \emptyset$. We may assume by Lemma~\ref{3-anti-circ-digon-C3} that $v_1 \mapsto v_3$ and $v_2 \mapsto v_3$. Thus it follows by Claim~\ref{3anti:claim1} that $N^-(v_3)= \{v_1,v_2\}$. First, suppose that there exists a vertex $v_4$ in $N^+(v_2)-\{v_1,v_3\}$. Since $v_1 \to v_3 \lto v_2 \to v_4$, it follows that $v_4 \to v_1$. Since $v_4 \to v_1 \lto v_2 \to v_3$, we conclude that $v_3 \to v_4$. Since $D \in \D$, there exists at least one digon in $D[\{v_2,v_3,v_4\}]$; otherwise, $D[\{v_2,v_3,v_4\}]$ is an induced transitive triangle. Since $v_2 \mapsto v_3$ and $N^-(v_3)=\{v_1,v_2\}$, it follows that $v_2 \digon v_4$. Thus the result follows by Lemma~\ref{3-anti-circ-digon-C3} applied to $D[\{v_2,v_3,v_4\}]$. So we may assume that $N^+(v_2)=\{v_1,v_3\}$. Let $P = v_1$. Since $v_2 \notin S$, $\{v_2,v_3\} \cap V(P)= \emptyset$, $v_2 \to v_1$, $v_1 \to v_3$ and $N^+(v_2) \subseteq V(P) \cup \{v_3\}$, the result follows by Lemma~\ref{arc-unique-u-disjoint-P-BE} with $u_1=v_2$ and $u_2=v_3$. This finishes the proof.
\end{proof}

By the \PDD, we have the following result. 
 
\begin{lemma}
\label{3-anti-circ-digon-TT-dual}
Let $D$ be a $3$-anti-circulant digraph such that every proper induced subdigraph of $D$ satisfies the BE-property. Let $S$ be a maximum stable set in $D$. Let $v_1 \digon v_2$ be a digon in $D$. If $D \in \D$ and there exists a vertex $v_3$ in $V(D)-\{v_1,v_2\}$ such that $v_3 \to \{v_1,v_2\}$ and $\{v_1,v_2,v_3\} \cap S \neq \emptyset$, then $D$ admits an $S_{BE}$-path partition.
\qed
\end{lemma}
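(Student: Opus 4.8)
The statement is simply the directional dual of Lemma~\ref{3-anti-circ-digon-TT}, so my plan is to invoke the \PDD\ directly and do essentially no new work. The key observations that make this legitimate are already collected in the excerpt: (i)~the class of $3$-anti-circulant digraphs is closed under taking the inverse digraph; (ii)~the class $\D$ is closed under taking the inverse, since reversing all arcs sends a blocking odd cycle (a source $x_1$ and a sink $x_2$ on a non-oriented odd cycle) to a blocking odd cycle (the roles of source and sink swap), and likewise for induced subdigraphs; and (iii)~a digraph $D$ admits an $S_{BE}$-path partition if and only if its inverse does, because reversing all arcs reverses every path while preserving its vertex set, endvertices (as a pair), and the stable set $S$. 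Finally, the hypothesis "every proper induced subdigraph of $D$ satisfies the BE-property" is itself self-dual by (iii) applied to each proper induced subdigraph.

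\textbf{Execution.} First I would let $D$ be a $3$-anti-circulant digraph in $\D$ all of whose proper induced subdigraphs satisfy the BE-property, let $S$ be a maximum stable set of $D$, let $v_1 \digon v_2$ be a digon of $D$, and let $v_3 \in V(D)-\{v_1,v_2\}$ be a vertex with $v_3 \to \{v_1,v_2\}$ and $\{v_1,v_2,v_3\}\cap S \neq \emptyset$. Let $D^{\leftarrow}$ denote the inverse digraph of $D$. By the observations above, $D^{\leftarrow}$ is a $3$-anti-circulant digraph, $D^{\leftarrow} \in \D$, every proper induced subdigraph of $D^{\leftarrow}$ satisfies the BE-property, and $S$ is still a maximum stable set of $D^{\leftarrow}$ (stability is undirected). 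In $D^{\leftarrow}$ we have $v_1 \digon v_2$ and $\{v_1,v_2\} \to v_3$, with $\{v_1,v_2,v_3\}\cap S \neq \emptyset$. Hence Lemma~\ref{3-anti-circ-digon-TT} applies to $D^{\leftarrow}$ and yields an $S_{BE}$-path partition $\sP$ of $D^{\leftarrow}$. Reversing each path of $\sP$ gives a path partition of $D$ orthogonal to $S$ in which the start-or-end membership condition is preserved (since reversal only swaps "initial" and "final"), i.e.\ an $S_{BE}$-path partition of $D$, as required.

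\textbf{Main obstacle.} There is essentially no obstacle here: the only thing to be careful about is confirming that \emph{all} the hypotheses and the conclusion of Lemma~\ref{3-anti-circ-digon-TT} are genuinely self-dual under arc reversal, which is exactly what the three bullet points above check. The qed-only presentation in the excerpt (the lemma is stated with \verb|\qed| and no proof body) indicates the authors intend precisely this one-line appeal to the \PDD, so I would keep the written justification to a single sentence citing Lemma~\ref{3-anti-circ-digon-TT} and the principle of directional duality, mirroring how the paper already handled Lemmas~\ref{arc-unique-N(u)-P-BE-dual}, \ref{arc-unique-u-disjoint-P-BE-dual}, and \ref{arc-unique-N(u)-P-alpha-dual}.
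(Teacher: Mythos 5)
Your proposal is correct and matches the paper exactly: the paper proves this lemma with the single line ``By the principle of directional duality, we have the following result,'' i.e.\ by applying Lemma~\ref{3-anti-circ-digon-TT} to the inverse digraph, which is precisely your argument. Your explicit verification that the class of $3$-anti-circulant digraphs, membership in $\D$, the BE-property hypothesis, and the $S_{BE}$-path-partition conclusion are all self-dual under arc reversal is a sound (and slightly more careful) write-up of the same one-line appeal.
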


The following lemma states that we may assume that for every transitive triangle $T$ in $D \in \D$, $V(T) \cap S = \emptyset$.

\begin{lemma}
\label{3-anti-circ-has-TT-BE}
Let $D$ be a $3$-anti-circulant digraph such that every proper induced subdigraph of $D$ satisfies the BE-property. Let $S$ be a maximum stable set in $D$. If $D \in \D$ and $D$ contains a transitive triangle $T$ such that $V(T) \cap S \neq \emptyset$, then $D$ admits an $S_{BE}$-path partition.
\end{lemma}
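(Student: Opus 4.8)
The goal is to show that a transitive triangle $T$ with $V(T) \cap S \neq \emptyset$ forces an $S_{BE}$-path partition. Write $T$ as $a \to b \to c$ with $a \to c$ (and no other arcs among $\{a,b,c\}$, since $T$ is an induced transitive triangle). Since $T$ has no digon, Lemmas~\ref{3-anti-circ-digon-TT} and \ref{3-anti-circ-digon-TT-dual} do not apply directly, so the argument must exploit the transitive structure and the $3$-anti-circulant condition. The plan is to split into cases according to which vertex of $T$ lies in $S$ (note at most one can, since $a,b,c$ are pairwise adjacent). The case $b \in S$ is impossible to analyze trivially since $b$ is adjacent to both $a$ and $c$; if $b\in S$ then $a,c \notin S$, and we would like to contract the triangle: delete $\{a,c\}$, extend an $S_{BE}$-path through $b$ using the arcs $a\to b$, $b\to c$ and $a \to c$. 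The key point is that in $D-\{a,c\}$, if the path $P$ through $b$ has $b$ as an internal vertex, we can reroute: letting $w,w'$ be the neighbours of $b$ on $P$, we use $w \to b$ (or $b \to w'$) to attach, but we need to re-insert $a$ and $c$; here the $3$-anti-circulant property (applied to anti-$P_4$'s involving $w, b, a, c$) should let us argue that $w$ dominates $a$ or that $c$ dominates $w'$, enabling a path of the form $wPb a c$ or $a c b P w'$ — essentially the same mechanism as in Lemma~\ref{3-anti-circ-digon-C3}, but using the single arc $a\to c$ to ``close'' the detour rather than a digon.

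For the cases $a \in S$ or $c \in S$, by the principle of directional duality it suffices to handle one, say $c \in S$ (then $a,b \notin S$). Here $a \to c$ and $b \to c$ are arcs into a stable-set vertex. I would first try to reduce to one of the earlier lemmas: if $N^-(c) = \{a,b\}$, then applying Lemma~\ref{arc-unique-u-disjoint-P-BE} (or its dual) with $u_1 = a$, $u_2 = c$, $P = b$ — provided $N^+(a) \subseteq \{b, c\}$ — would finish; otherwise there is a vertex $v_4 \in N^+(a) \setminus \{b,c\}$, and the $3$-anti-circulant condition applied to $b \to c \lto a \to v_4$ gives $v_4 \to b$, and then to $v_4 \to b \lto a \to c$ gives $c \to v_4$, which (together with $D \in \D$ forbidding an induced transitive triangle on $\{a, b, v_4\}$ or $\{b, c, v_4\}$) should force a digon somewhere among these four vertices, at which point Lemma~\ref{3-anti-circ-digon-TT} or Lemma~\ref{3-anti-circ-digon-C3} applies. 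If instead $N^-(c) \supsetneq \{a,b\}$, there is a third in-neighbour $v_4$ of $c$; applying the $3$-anti-circulant condition to the anti-$P_4$'s $a \to c \lto v_4 \to ?$ (and symmetrically with $b$) should again produce enough arcs to locate a digon and invoke the digon lemmas.

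The main obstacle I anticipate is the sub-case $b \in S$ (the ``middle'' vertex of the transitive triangle in $S$), because then neither $a$ nor $c$ is a source/sink relative to the triangle in a way that matches the hypotheses of Lemmas~\ref{arc-unique-N(u)-P-BE}--\ref{arc-unique-u-disjoint-P-BE-dual}, and the rerouting argument has to simultaneously handle re-insertion of \emph{two} vertices $a,c$ around $b$ while respecting the begin/end constraint. The cleanest route is probably an induction along the path $P$ through $b$ in $D - \{a,c\}$, mirroring the inductive argument inside Lemma~\ref{3-anti-circ-digon-C3}: show that every vertex $w$ encountered moving away from $b$ along $P$ satisfies $w \to a$ or $w \to c$ (pushing the $3$-anti-circulant condition along each arc of $P$, using $a \to c$ as the ``return'' arc), so that eventually the far endpoint of $P$ can be joined to $a$ or $c$ and the triangle re-attached as a pendant $\cdots a c$ or $\cdots a b c$. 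I would also keep in mind the degenerate possibility $V(P) = \{b\}$, handled immediately by replacing the singleton path $b$ with $a b c$ (or $a c$ then a separate $b$ — but $b \in S$ must be an endvertex, so $a b c$ is the right choice, with $b$ internal — wait, that violates (ii); instead use the path $a c$ together with... no: the correct replacement is $P$ becomes $abc$ only if $b$ may be internal, which it may not, so one uses that $a\to c$ lets $a$ and $c$ share a path $ac$ while $b$ stays as the singleton $S$-path — but then $b$ must be covered, contradiction since $b$ is the singleton; the resolution is that $|V(D)|=3$ here is exactly the transitive triangle, excluded as $D \in \D$, so $V(P)=\{b\}$ with $|V(D)|>3$ forces another path to borrow an endpoint from, which is where the neighbourhood analysis re-enters). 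Ironing out this degenerate book-keeping, and confirming that the rerouted collection indeed satisfies condition (ii) of the $S_{BE}$-property for \emph{every} path (not just the modified one), is the delicate part.
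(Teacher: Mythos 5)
There is a genuine gap, and it is right at the start: you read ``transitive triangle'' as \emph{induced} transitive triangle and assume there are no arcs among $\{a,b,c\}$ beyond $a\to b$, $b\to c$, $a\to c$. But an induced transitive triangle is exactly a blocking odd cycle of length three (its source/sink pair plays the roles of $x_1,x_2$), and the hypothesis $D\in\D$ forbids induced blocking odd cycles. So under your reading the lemma is vacuous, and the configuration the lemma is actually about --- a triangle in $U(D)$ that is not a directed $\overrightarrow{C_3}$ but whose vertex set may (indeed must) carry a digon --- is never treated. Your sentence ``Since $T$ has no digon, Lemmas~\ref{3-anti-circ-digon-TT} and \ref{3-anti-circ-digon-TT-dual} do not apply directly'' inverts the intended mechanism: the whole point of the paper's proof is that $D\in\D$ \emph{forces} a digon inside $T$ (otherwise $D[V(T)]$ is an induced transitive triangle), and the previously proved digon lemmas then do almost all the work. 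Concretely, writing $v_1\to v_2$ and $\{v_1,v_2\}\to v_3$: if $v_1\digon v_2$ or $v_2\digon v_3$, Lemma~\ref{3-anti-circ-digon-TT} or its dual applies; otherwise $v_1\digon v_3$, and if $v_2\in S$ Lemma~\ref{3-anti-circ-digon-C3} applies (the triangle contains a $\overrightarrow{C_3}$ through the digon). In the remaining case one may take $v_3\in S$, show $N^+(v_1)=\{v_2,v_3\}$ (any further out-neighbour $v_4$ yields $v_4\to v_2$ and $v_3\to v_4$ by two applications of the $3$-anti-circulant condition, reducing to Lemma~\ref{3-anti-circ-digon-TT} on $D[\{v_1,v_3,v_4\}]$), and finish with Lemma~\ref{arc-unique-u-disjoint-P-BE} taking $P=v_2$, $u_1=v_1$, $u_2=v_3$.

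Because of this misreading, your hardest case --- the ``middle'' vertex $b\in S$ with a two-vertex re-insertion along the path through $b$ --- is a case the paper never needs to face by rerouting: it is dispatched in one line via Lemma~\ref{3-anti-circ-digon-C3} once the digon is located. Even taken on its own terms, the proposal is a plan rather than a proof: the rerouting in the $b\in S$ case, the verification of condition (ii) of the $S_{BE}$-property for the modified paths, and the degenerate bookkeeping are all explicitly left unresolved. The fix is not to iron those out but to discard the ``induced, digon-free'' premise and start from the observation that $D\in\D$ guarantees a digon in $T$.
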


\begin{proof}
Let $V(T)=\{v_1,v_2,v_3\}$. Without loss of generality, assume that $v_1 \to v_2$ and $\{v_1,v_2\} \to v_3$. Since $D \in \D$, there exists at least one digon in $T$; otherwise, $T$ is an induced transitive triangle. If $v_1 \digon v_2$ (resp., $v_2 \digon v_3$), then the result follows by Lemma~\ref{3-anti-circ-digon-TT} (resp., Lemma~\ref{3-anti-circ-digon-TT-dual}). Thus $v_1 \digon v_3$. If $v_2 \in S$, then the result follows by Lemma~\ref{3-anti-circ-digon-C3}. So $\{v_1,v_3\} \cap S \neq \emptyset$. Without loss of generality, assume that $v_3 \in S$. We show next that $N^+(v_1)=\{v_2,v_3\}$. Suppose that there exists a vertex $v_4$ in $N^+(v_1)-\{v_2,v_3\}$. Since $v_2 \to v_3 \lto v_1 \to v_4$ and $D$ is $3$-anti-circulant, we conclude that $v_4 \to v_2$. Also, since $v_4 \to v_2 \lto v_1 \to v_3$, it follows that $v_3 \to v_4$. Thus the result follows by Lemma~\ref{3-anti-circ-digon-TT} applied to $D[\{v_1,v_3,v_4\}]$. So we may assume that $N^+(v_1)=\{v_2,v_3\}$. Let $P=v_2$. Since $v_1 \notin S$, $\{v_1,v_3\} \cap V(P)= \emptyset$, $v_1 \to v_2$, $v_2 \to v_3$ and $N^+(v_1) \subseteq V(P) \cup \{v_3\}$, the result follows by Lemma~\ref{arc-unique-u-disjoint-P-BE} with $u_1=v_1$ and $u_2=v_3$. This finishes the proof.
\end{proof}

The next lemma states that we may assume that $B^- \cup B^{\pm} \Rightarrow B^+$.

\begin{lemma}
\label{3-anti-circ-B-+RB+}
Let $D$ be a $3$-anti-circulant digraph such that every proper induced subdigraph of $D$ satisfies the BE-property. Let $S$ be a maximum stable set in $D$. If $D \in \D$ and there are vertices $v_1 \in B^+$ and $v_2 \in B^- \cup B^{\pm}$ such that $v_1 \to v_2$, then $D$ admits an $S_{BE}$-path partition.
\end{lemma}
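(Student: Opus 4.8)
The plan is to exploit an arc $v_1v_2$ with $v_1 \in B^+$ and $v_2 \in B^- \cup B^{\pm}$ to expose a small induced configuration that either yields a transitive triangle meeting $S$ (whence we finish via Lemma~\ref{3-anti-circ-has-TT-BE}), or yields a vertex with a very restricted neighborhood so that one of the splicing lemmas from Section~\ref{struc-results} applies. First I would pick, by definition of $B^+$, a vertex $s_1 \in S$ with $v_1 \to s_1$, and a vertex $s_2 \in S$ witnessing the membership of $v_2$: if $v_2 \in B^-$ then $s_2 \to v_2$ for some $s_2 \in S$, while if $v_2 \in B^{\pm}$ we may already take $s_2 \to v_2$ as well (the case $v_2 \to s_2'$ for $v_2 \in B^\pm$ is handled by the directional dual, or absorbed since $v_2 \in B^\pm$ also has an in-neighbor in $S$). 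Note $s_1 \neq s_2$ is possible or not; a quick check shows $v_1 \notin S$, $v_2 \notin S$, and $v_1,v_2,s_1,s_2$ are positioned so that $3$-anti-circulancy forces extra arcs.

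The key step is the following case analysis on the adjacency between $v_1$ and $s_2$, and between $v_2$ and $s_1$. Since $v_1 \in B^+$ we have $v_1 \Rightarrow S$, so $s_2 \to v_1$ is impossible; thus either $v_1 \to s_2$ or $v_1,s_2$ are non-adjacent. Symmetrically, since $v_2$ is dominated by $s_2 \in S$ and $v_1 \to v_2$, the anti-$P_4$ $s_1 \lto v_1 \to v_2 \lto s_2$ (if $s_1 \neq s_2$) together with $3$-anti-circulancy forces $v_2 \to s_1$ — but then $s_1 \to v_1 \to v_2 \to s_1$ contradicts $S$ being stable through $v_1,v_2 \notin S$ only if... actually it just gives a directed triangle $v_1v_2s_1$, which combined with $v_1 \to s_1$ yields that $D[\{v_1,v_2,s_1\}]$ contains a $\overrightarrow{C_3}$ with $s_1 \in S$; depending on whether $v_1 \digon v_2$ we either invoke Lemma~\ref{3-anti-circ-digon-C3} directly, or we analyze $N^-(v_2)$ and $N^+(v_1)$ to reduce to Lemma~\ref{arc-unique-u-disjoint-P-BE} or its dual with a one-vertex path $P$. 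If instead $s_1 = s_2$, then $v_1 \to s_1 \to v_2$ and $v_1 \to v_2$ already give a transitive triangle $D[\{v_1,v_2,s_1\}]$ with $s_1 \in S$; since $D \in \D$ this triangle cannot be induced as a transitive triangle, so it must contain a digon, and then Lemma~\ref{3-anti-circ-has-TT-BE} (or directly Lemma~\ref{3-anti-circ-digon-C3}) finishes.

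The remaining subcase is $v_1$ and $s_2$ non-adjacent (so necessarily $s_1 \neq s_2$): here I would look at a second witness, namely an out-neighbor of $v_2$ inside $S$ if $v_2 \in B^\pm$, or reconsider the arc structure around $v_2$. The main obstacle I anticipate is precisely controlling this situation where the two $S$-witnesses are distinct and only loosely tied together: one must show that either $N^+(v_1)$ is contained in $\{v_2\} \cup (\text{something in } S)$ — allowing Lemma~\ref{arc-unique-u-disjoint-P-BE} with $u_1 = v_1$, $u_2 = $ the relevant $S$-vertex, and $P$ a trivial path — or that pushing further produces a fourth vertex $v_4$ with $v_4 \to v_2$, at which point a Claim~\ref{3anti:claim1}-style argument (as in Lemma~\ref{3-anti-circ-digon-TT}) forces a complete or near-complete digraph on three vertices and hence a digon meeting $S$. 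I expect the bookkeeping of which $S$-vertex plays the role of $u_2$, and verifying the hypothesis $N^+(u_1) \subseteq V(P) \cup \{u_2\}$, to be the delicate part; the $3$-anti-circulant condition applied repeatedly to anti-$P_4$'s through $v_1, v_2$ and an $S$-vertex should keep the out-neighborhood of $v_1$ pinned down to exactly the size needed.
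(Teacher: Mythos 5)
Your setup is the same as the paper's: pick $s_1\in S$ with $v_1\to s_1$ (definition of $B^+$) and $s_2\in S$ with $s_2\to v_2$ (which works uniformly for $v_2\in B^-\cup B^{\pm}$). But you then misapply the $3$-anti-circulant condition in the case $s_1\neq s_2$, and this is where the proof breaks. The three arcs $s_2\to v_2$, $v_1\to v_2$, $v_1\to s_1$ form the anti-$P_4$ $s_2\to v_2\lto v_1\to s_1$, and the defining property forces an arc from the \emph{fourth} vertex to the \emph{first}, i.e.\ $s_1\to s_2$ --- an arc between two vertices of $S$, which is an immediate contradiction with $S$ being stable. That is the whole point: $s_1\neq s_2$ is simply impossible, so $s_1=s_2$ always holds, and then $v_1\to s_1\to v_2$ together with $v_1\to v_2$ is a transitive triangle meeting $S$, finishing by Lemma~\ref{3-anti-circ-has-TT-BE}. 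Your claimed consequence ``$v_2\to s_1$'' does not follow from any anti-$P_4$ present in the configuration, and the subsequent discussion of directed triangles, digons, and restricted neighborhoods chases a case that cannot occur.

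Beyond that misstep, the remainder of the proposal is a plan rather than a proof: the subcase where $v_1$ and $s_2$ are non-adjacent is left entirely open, and the appeals to Lemma~\ref{arc-unique-u-disjoint-P-BE} and a Claim~\ref{3anti:claim1}-style argument are never carried out (nor are they needed). Your handling of the subcase $s_1=s_2$ is correct and coincides with the paper's conclusion, but since that subcase is in fact the only one, the lemma has a two-line proof and none of the extra machinery you anticipate is required.
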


\begin{proof}
By definition of $B^+$, there exists a vertex $y_1$ in $S$ such that $v_1 \to y_1$. By definition of $B^{\pm}$ and $B^-$, there exists a vertex $y_2$ in $S$ such that $y_2 \to v_2$. Towards a contradiction, suppose that $y_1 \neq y_2$. Since $y_2 \to v_2 \lto v_1 \to y_1$ and $D$ is $3$-anti-circulant, it follows that $y_2 \to y_1$, a contradiction because $S$ is stable. So $y_1=y_2$, and hence, the result follows by Lemma~\ref{3-anti-circ-has-TT-BE} applied to $D[\{v_1,v_2,y_1\}]$. 
\end{proof}

By the \PDD, we have the following result.

\begin{lemma}
\label{3-anti-circ-B-+RB+-dual}
Let $D$ be a $3$-anti-circulant digraph such that every proper induced subdigraph of $D$ satisfies the BE-property. Let $S$ be a maximum stable set in $D$. If $D \in \D$ and there are $v_1 \in B^+ \cup B^{\pm}$ and $v_2 \in B^-$ such that $v_1 \to v_2$, then $D$ admits an $S_{BE}$-path partition.
\qed
\end{lemma}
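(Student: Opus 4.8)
The plan is to deduce this lemma from Lemma~\ref{3-anti-circ-B-+RB+} by the \PDD, exactly as the three preceding ``-dual'' lemmas were obtained. Let $\overline{D}$ denote the inverse digraph of $D$. First I would check that all the hypotheses of Lemma~\ref{3-anti-circ-B-+RB+} transfer to $\overline{D}$: the defining condition of a $3$-anti-circulant digraph is self-dual (the inverse of an anti-$P_4$ is an anti-$P_4$, with the required closing arc reversing as well), so $\overline{D}$ is again $3$-anti-circulant; every proper induced subdigraph of $\overline{D}$ equals the inverse of a proper induced subdigraph of $D$, and a digraph satisfies the BE-property if and only if its inverse does, so every proper induced subdigraph of $\overline{D}$ satisfies the BE-property; and the inverse of a blocking odd cycle is again a blocking odd cycle (relabel the cycle so that the former sink becomes the new $x_1$ and the former source becomes the new $x_2$), whence $\overline{D}\in\D$ because $D\in\D$. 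Also, $S$ remains a maximum stable set of $\overline{D}$, since adjacency is unchanged under inversion.

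Next I would track the auxiliary sets. Reversing all arcs interchanges ``$v$ dominates a vertex of $S$'' with ``$v$ is dominated by a vertex of $S$''; using that $S$ is maximum (so every vertex outside $S$ is adjacent to a vertex of $S$), one gets that $B^+$ computed in $D$ coincides with $B^-$ computed in $\overline{D}$, and vice versa, while $B^{\pm}$ is the same set in both digraphs. Hence the hypothesis ``$v_1\in B^+\cup B^{\pm}$, $v_2\in B^-$, and $v_1\to v_2$ in $D$'' translates into ``$v_2\in B^+$, $v_1\in B^-\cup B^{\pm}$, and $v_2\to v_1$ in $\overline{D}$'', which is precisely the hypothesis of Lemma~\ref{3-anti-circ-B-+RB+} applied to $\overline{D}$ (with the two distinguished vertices playing swapped roles). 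That lemma then produces an $S_{BE}$-path partition $\overline{\sP}$ of $\overline{D}$.

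To conclude, I would reverse every path of $\overline{\sP}$ to obtain a path partition $\sP$ of $D$. Orthogonality with $S$ is obviously preserved, and reversing a path swaps its initial and final vertex, so condition (ii) in the definition of an $S_{BE}$-path partition (every vertex of $S$ is an endvertex of its path) is also preserved; thus $\sP$ is an $S_{BE}$-path partition of $D$, as required. There is no real obstacle here: the only content-bearing points are the self-duality statements above — $3$-anti-circulance, membership in $\D$, the BE-property of all proper induced subdigraphs, and the behaviour of the triple $(B^+,B^{\pm},B^-)$ under inversion — and all of them are routine, which is precisely why the lemma is recorded without a separate proof.
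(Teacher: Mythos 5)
Your proposal is correct and matches the paper exactly: the paper derives this lemma from Lemma~\ref{3-anti-circ-B-+RB+} purely by the principle of directional duality, which is why it is stated with no separate proof. Your explicit verification that $3$-anti-circulance, membership in $\D$, the BE-property of proper induced subdigraphs, and the swap $B^+\leftrightarrow B^-$ (with $B^{\pm}$ fixed) all behave correctly under inversion is precisely the routine checking the paper leaves implicit.
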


We show next that if $D \in \D$, then we may assume that $B^{\pm}$ is a stable set. 

\begin{lemma}
\label{3-anti-circ-B+-stable}
Let $D$ be a $3$-anti-circulant digraph such that every proper induced subdigraph of $D$ satisfies the BE-property. Let $S$ be a maximum stable set in $D$. If $D \in \D$ and $B^{\pm}$ is not stable, then $D$ admits an $S_{BE}$-path partition.
\end{lemma}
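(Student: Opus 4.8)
The plan is to reduce, once more, to the digon lemmas already established. Suppose $B^{\pm}$ is not stable, so that there are adjacent vertices $u_1,u_2\in B^{\pm}$; after renaming we may assume $u_1\to u_2$. By the definition of $B^{\pm}$ there is a vertex $y_1\in S$ with $u_1\to y_1$ and a vertex $z_2\in S$ with $z_2\to u_2$. The crucial step is to show that $y_1=z_2$. Indeed, if $y_1\neq z_2$ then $z_2,u_2,u_1,y_1$ are four distinct vertices (here we use $S\cap B^{\pm}=\emptyset$, so $\{u_1,u_2\}\cap\{y_1,z_2\}=\emptyset$, together with $u_1\neq u_2$) and they form the anti-$P_4$ $z_2\to u_2\lto u_1\to y_1$; since $D$ is $3$-anti-circulant this forces $y_1\to z_2$, contradicting the stability of $S$. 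Hence $y_1=z_2=:y$, so we have found a vertex $y\in S$ with $u_1\to y$ and $y\to u_2$, and $y\notin\{u_1,u_2\}$ because $S\cap B^{\pm}=\emptyset$.

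Now I would examine $D[\{u_1,u_2,y\}]$, which certainly contains the arcs $u_1\to u_2$, $u_1\to y$ and $y\to u_2$. If it contained no further arc, it would be an induced transitive triangle with source $u_1$ and sink $u_2$, hence an induced blocking odd cycle, contradicting $D\in\D$. So $D[\{u_1,u_2,y\}]$ contains a digon, on one of the pairs $\{u_1,u_2\}$, $\{u_1,y\}$, $\{u_2,y\}$, and a short case analysis finishes the proof. If $u_1\digon u_2$, then this is a digon of $D-S$, the subdigraph $D[\{u_1,u_2,y\}]$ contains the $\overrightarrow{C_3}$ $u_1\to y\to u_2\to u_1$, and $y\in S$, so Lemma~\ref{3-anti-circ-digon-C3} applies. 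If $u_1\digon y$, then $\{u_1,y\}\to u_2$ and $y\in\{u_1,u_2,y\}\cap S$, so Lemma~\ref{3-anti-circ-digon-TT} applies with $v_1=u_1$, $v_2=y$ and $v_3=u_2$. If $u_2\digon y$, then $u_1\to\{u_2,y\}$ and $y\in\{u_1,u_2,y\}\cap S$, so Lemma~\ref{3-anti-circ-digon-TT-dual} applies with $v_1=u_2$, $v_2=y$ and $v_3=u_1$. In each case $D$ admits an $S_{BE}$-path partition. (Alternatively, whenever $\{u_1,u_2,y\}$ does not induce a directed triangle it is itself a transitive triangle meeting $S$, and one may invoke Lemma~\ref{3-anti-circ-has-TT-BE} directly.)

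There is no real obstacle in this lemma: it is a one-step forcing argument, producing the vertex $y\in S$ with $u_1\to y\to u_2$, followed by routine bookkeeping. The only points requiring care are the distinctness of the four vertices used in the anti-$P_4$ (in particular the degenerate case $y_1=z_2$, which is precisely the conclusion we want) and checking that the orientation hypotheses of Lemmas~\ref{3-anti-circ-digon-C3}, \ref{3-anti-circ-digon-TT} and \ref{3-anti-circ-digon-TT-dual} are met in the three subcases.
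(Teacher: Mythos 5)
Your proposal is correct and follows essentially the same route as the paper: the key step in both is forcing $y_1=z_2$ via the $3$-anti-circulant property and the stability of $S$, after which the triangle $u_1\to u_2$, $u_1\to y$, $y\to u_2$ meets $S$ and the paper simply invokes Lemma~\ref{3-anti-circ-has-TT-BE} (your parenthetical alternative), while you inline the digon case analysis that constitutes that lemma's proof. Your checks of the orientation hypotheses in the three digon subcases are all accurate.
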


\begin{proof}
Let $v_1,v_2$ be adjacent vertices in $B^{\pm}$. Without loss of generality, assume that $v_1 \to v_2$. By definition of $B^{\pm}$, there are vertices $y_1,y_2$ in $S$ such that $v_1 \to y_1$ and $y_2 \to v_2$. Since $S$ is stable and $D$ is $3$-anti-circulant, it follows that $y_1=y_2$, and hence, the result follows by Lemma~\ref{3-anti-circ-has-TT-BE} applied to $D[\{v_1,v_2,y_1\}]$.
\end{proof}

The next lemma states that if $D$ contains an anti-$P_4$ disjoint from $S$, then $D$ admits an $S_{BE}$-path partition.

\begin{lemma}
\label{3-anti-circ-antiP-noS}
Let $D$ be a $3$-anti-circulant digraph such that every proper induced subdigraph of $D$ satisfies the BE-property. Let $S$ be a maximum stable set in $D$. If $D \in \D$ and $D$ contains an anti-$P_4$ disjoint from $S$, then $D$ admits an $S_{BE}$-path partition.
\end{lemma}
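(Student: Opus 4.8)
The plan is to exploit the defining property of $3$-anti-circulant digraphs together with the structural lemmas already proved in this section, and reduce to an application of one of the ``Lemma~\ref{arc-unique-N(u)-P-BE}''-type results. Let $v_1 \to v_2 \lto v_3 \to v_4$ be an anti-$P_4$ in $D$ with $\{v_1,v_2,v_3,v_4\} \cap S = \emptyset$. Since $D$ is $3$-anti-circulant, $v_4 \to v_1$, so $\{v_1,v_2,v_3,v_4\}$ induces a subdigraph containing the $4$-cycle $v_1 v_2$-pattern closed via $v_3 \to v_4 \to v_1$; more precisely we get the arcs $v_1 v_2$, $v_3 v_2$, $v_3 v_4$, $v_4 v_1$. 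First I would check whether any of the remaining possible arcs among these four vertices are present: if any extra arc (or a digon) appears, then $D[\{v_1,v_2,v_3,v_4\}]$ contains a transitive triangle or a digon-$C_3$, and in several of these sub-cases we can either invoke Lemma~\ref{3-anti-circ-has-TT-BE} directly (if that triangle meets $S$, which it does not here, so this needs care) or argue that the extra arc forces a shorter configuration to which an earlier lemma applies. The main work is the ``generic'' case in which $D[\{v_1,v_2,v_3,v_4\}]$ is exactly the digraph of Figure~\ref{fig-3-anti-circ:a}.

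In that generic case, the strategy is to produce a vertex whose out-neighbourhood (or in-neighbourhood) is confined to a short path among these vertices, so that Lemma~\ref{arc-unique-N(u)-P-BE} or Lemma~\ref{arc-unique-u-disjoint-P-BE} (or their duals) applies. Consider $v_2$: it is a sink of the configuration inside $\{v_1,v_2,v_3,v_4\}$. If $N^-(v_2)$ were contained in this set, I would take $P = v_3 v_4 v_1$ (a path, using $v_3 \to v_4 \to v_1$) — wait, $v_2$ has in-neighbours $v_1$ and $v_3$ which are not consecutive on that path, so instead take $P = v_4 v_1 v_2$ and look at $v_3$: then $v_3 \to v_2$ and $v_3 \to v_4$, and $v_4 \to v_1$ is the path $P = v_4 v_1$ plus... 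The cleaner move: set $u = v_3$, which dominates $v_2$ and $v_4$; if $N^+(v_3) \subseteq \{v_2,v_4,v_1\}$ then with $P = v_4 v_1$ and the arc $v_4 \to v_1$, $v_2 \notin V(P)$, one applies Lemma~\ref{arc-unique-u-disjoint-P-BE-dual} or a variant. If, on the other hand, $v_3$ has an out-neighbour $v_5 \notin \{v_1,v_2,v_4\}$, then $v_1 \to v_2 \lto v_3 \to v_5$ is again an anti-$P_4$, so $v_5 \to v_1$; iterating and using finiteness together with Lemma~\ref{3-anti-circ-N(B+)<2}-style counting arguments should pin down the structure or expose a transitive triangle. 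So the real content is a finite case analysis driven by ``either the neighbourhood of the relevant vertex is small (apply a gadget lemma) or it is large (apply $3$-anti-circularity again to extend the anti-$P_4$ and recurse)''.

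The hard part will be organising this case analysis so it genuinely terminates and so that in every branch one of the previously established lemmas (Lemmas~\ref{arc-unique-N(u)-P-BE}--\ref{lem-part-V1-V2-V3}, \ref{3-anti-circ-digon-C3}, \ref{3-anti-circ-digon-TT}, \ref{3-anti-circ-has-TT-BE}) is literally applicable — in particular matching the hypotheses about which vertices lie outside $S$, since here all four of $v_1,\dots,v_4$ are outside $S$, so the triangle lemmas that require $V(T)\cap S\neq\emptyset$ are \emph{not} available and I must instead keep producing anti-$P_4$'s and track how their vertices distribute among $B^+, B^-, B^{\pm}$. I expect the argument to split according to which of $v_1, v_4$ (the ``source-like'' vertices) and $v_2$ (the ``sink-like'' vertex) lie in which of $B^+, B^-, B^{\pm}$, using Lemmas~\ref{3-anti-circ-B-+RB+} and \ref{3-anti-circ-B-+RB+-dual} to rule out cross arcs, and then to invoke Lemma~\ref{arc-unique-u-disjoint-P-BE} (with $P$ a one- or two-vertex path on $\{v_1\}$ or $\{v_1,v_2\}$, $u_1 = v_3$, $u_2 = v_2$ or $u_2 = v_4$) once the relevant out-neighbourhood has been shown to be small. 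Getting the indices and the direction of the gadget lemma to line up is where the bookkeeping effort lies, but no deep new idea beyond repeated use of the $3$-anti-circulant closure property is needed.
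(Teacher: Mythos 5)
Your plan has the right ingredients --- pin down where $v_1,\dots,v_4$ sit among $B^+$, $B^-$, $B^{\pm}$ using Lemmas~\ref{3-anti-circ-B-+RB+}, \ref{3-anti-circ-B-+RB+-dual}, \ref{3-anti-circ-B+-stable} and \ref{3-anti-circ-N(B+)<2}, correctly observe that the transitive-triangle lemmas are unavailable because all four vertices avoid $S$, and aim to finish with Lemma~\ref{arc-unique-N(u)-P-BE} once some neighbourhood is confined. This is indeed the skeleton of the paper's argument, which first proves $v_2\in B^+$ and $v_3\in B^-$ exactly as you anticipate. But there is a genuine gap at the decisive step. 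The paper does \emph{not} try to show that $N^+(v_3)$ alone is confined, nor does it iterate/recurse on new anti-$P_4$'s; instead it proves the disjunction: either $N^+(v_3)\subseteq\{v_4,v_1,v_2\}$ or $N^-(v_2)\subseteq\{v_3,v_4,v_1\}$. The proof of this disjunction is by contradiction: assuming witnesses $w_1\in N^-(v_2)\setminus\{v_3,v_4,v_1\}$ and $w_2\in N^+(v_3)\setminus\{v_4,v_1,v_2\}$ \emph{both} exist, repeated applications of $3$-anti-circularity force $\{w_2,v_4\}\to w_1$, $w_2\to v_1$, then $v_1\digon v_4$, and finally $v_2\to v_3$, contradicting $v_2\in B^+$, $v_3\in B^-$ and $B^-\cup B^{\pm}\Rightarrow B^+$ (with the subcase $w_1=w_2$ dispatched separately via a forced digon $w_1\digon v_3$ and Lemma~\ref{3-anti-circ-B-+RB+-dual}). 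Your proposal only contemplates the one-sided situation ``$v_3$ has an external out-neighbour $v_5$'' and hopes that ``iterating and using finiteness \dots\ should pin down the structure''; that branch does not terminate on its own and does not by itself yield a contradiction --- when $N^+(v_3)$ is not confined you must switch to the dual statement about $N^-(v_2)$ and apply Lemma~\ref{arc-unique-N(u)-P-BE-dual} with $u=v_2$ and $P=v_3v_4v_1$. Without the either--or formulation and the two-witness contradiction, the case analysis you describe cannot be closed.

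Two smaller points. First, note that $P_1=v_4v_1v_2$ contains \emph{both} out-neighbours $v_2$ and $v_4$ of $v_3$ inside the configuration, which is why Lemma~\ref{arc-unique-N(u)-P-BE} applies cleanly with $u=v_3$ and $P=P_1$; your hesitation about which short path to use (``$v_2$ has in-neighbours $v_1$ and $v_3$ which are not consecutive\dots'') is resolved by choosing the three-vertex paths $v_4v_1v_2$ and $v_3v_4v_1$ rather than two-vertex ones. Second, Lemma~\ref{arc-unique-u-disjoint-P-BE} and its dual are not needed here at all; the whole endgame runs through Lemma~\ref{arc-unique-N(u)-P-BE} and Lemma~\ref{arc-unique-N(u)-P-BE-dual}.
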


\begin{proof}
Let $\{v_1, v_2, v_3, v_4\} \subseteq V(D)$ be an anti-$P_4$ in $D$ such that $v_1 \to v_2 \lto v_3 \to v_4$. Since $D$ is 3-anti-circulant, we conclude that $v_4 \to v_1$. We show next that $v_2 \in B^+$ and $v_3 \in B^-$. Note that by the \PDD, it suffices to show that $v_3 \in B^-$. Moreover, we may assume by Lemma~\ref{3-anti-circ-B-+RB+} that $B^- \cup B^{\pm} \Rightarrow B^+$. Towards a contradiction, suppose that $v_3 \notin B^-$. Since $v_3 \notin S$, it follows that $v_3 \in B^+ \cup B^{\pm}$. If $v_3 \in B^+$, then since $v_3 \to \{v_2,v_4\}$ and $B^- \cup B^{\pm} \Rightarrow B^+$, we conclude that $\{v_2,v_4\} \subset B^+$. Since $v_4 \in B^+$, it follows that $v_1 \in B^+$, and hence, $ \vert  N^-(v_2) \cap B^+ \vert  >1$, a contradiction by Lemma~\ref{3-anti-circ-N(B+)<2}. If $v_3 \in B^{\pm}$, then since $v_3 \to v_4$, it follows by Lemma~\ref{3-anti-circ-B+-stable} that $v_4 \notin B^{\pm}$. By Lemma~\ref{3-anti-circ-B-+RB+-dual}, $v_4 \notin B^-$. So $v_4 \in B^+$. Since $v_4 \in B^+$ and $B^- \cup B^{\pm} \Rightarrow B^+$, it follows that $v_1 \in B^+$. By definition of $B^{\pm}$, there exists a vertex $y$ in $S$ such that $v_3 \to y$. Since $v_1 \to v_2 \lto v_3 \to y$, we conclude that $y \to v_1$, a contradiction because $v_1 \in B^+$. Thus $v_3 \in B^-$ and $v_2 \in B^+$.

Now, let $P_1= v_4v_1v_2$ and let $P_2 = v_3v_4v_1$. Towards a contradiction, suppose that $N^+(v_3) \not\subseteq V(P_1)$ and $N^-(v_2) \not\subseteq V(P_2)$. So let $w_1, w_2$ vertices such that $w_1$ in $N^-(v_2) - V(P_2)$ and $w_2$ in $N^+(v_3) - V(P_1)$. First, suppose that $w_1=w_2$. Since $D \in \D$, there exists at least one digon in $D[\{v_2,v_3,w_1\}]$; otherwise, $D[\{v_2,v_3,w_1\}]$ is an induced transitive triangle. Since $v_2 \in B^+$ and $B^- \cup B^{\pm} \Rightarrow B^+$, we conclude that $w_1 \digon v_3$, and since $v_3 \in B^-$, the result follows by Lemma~\ref{3-anti-circ-B-+RB+-dual}. So we may assume that $w_1 \neq w_2$ (see Figure~\ref{fig-3-anti-prove-P4:a}). 

\begin{figure}[htbp]
\begin{center}
\subfloat[]{
    \tikzset{middlearrow/.style={
	decoration={markings,
		mark= at position 0.6 with {\arrow{#1}},
	},
	postaction={decorate}
}}

\tikzset{shortdigon/.style={
	decoration={markings,
		mark= at position 0.45 with {\arrow[inner sep=10pt]{<}},
		mark= at position 0.75 with {\arrow[inner sep=10pt]{>}},
	},
	postaction={decorate}
}}

\tikzset{digon/.style={
	decoration={markings,
		mark= at position 0.4 with {\arrow[inner sep=10pt]{<}},
		mark= at position 0.6 with {\arrow[inner sep=10pt]{>}},
	},
	postaction={decorate}
}}

\begin{tikzpicture}[scale = 0.7]		
	\node (n3) [black vertex] at (9,5)  {};
	\node (n4) [black vertex] at (9,8)  {};
	\node (n1) [black vertex] at (4,8)  {};
	\node (n2) [black vertex] at (4,5)  {};
	
	\node (n5) [black vertex] at (5,6.5)  {};
	\node (n6) [black vertex] at (8,6.5)  {};

	\node (label_n3)  at (9.5,4.5)  {$v_3$};
	\node (label_n4)  at (9.5,8.5)  {$v_4$};
	\node (label_n1)  at (3.5,8.5)  {$v_1$};
	\node (label_n2)  at (3.5,4.5)  {$v_2$};
	
	\node (label_n5)  at (4.5,6.8)  {$w_1$};
	\node (label_n6)  at (8.5,6.8)  {$w_2$};

  \foreach \from/\to in {n1/n2,n3/n2,n3/n4,n4/n1,n5/n2,n3/n6}
    \draw[edge,middlearrow={>}] (\from) -- (\to);    
\end{tikzpicture}
    \label{fig-3-anti-prove-P4:a}

}
\quad
\subfloat[]{
    \tikzset{middlearrow/.style={
	decoration={markings,
		mark= at position 0.6 with {\arrow{#1}},
	},
	postaction={decorate}
}}

\tikzset{shortdigon/.style={
	decoration={markings,
		mark= at position 0.45 with {\arrow[inner sep=10pt]{<}},
		mark= at position 0.75 with {\arrow[inner sep=10pt]{>}},
	},
	postaction={decorate}
}}

\tikzset{digon/.style={
	decoration={markings,
		mark= at position 0.4 with {\arrow[inner sep=10pt]{<}},
		mark= at position 0.6 with {\arrow[inner sep=10pt]{>}},
	},
	postaction={decorate}
}}

\begin{tikzpicture}[scale = 0.7]		
	\node (n3) [black vertex] at (9,5)  {};
	\node (n4) [black vertex] at (9,8)  {};
	\node (n1) [black vertex] at (4,8)  {};
	\node (n2) [black vertex] at (4,5)  {};
	
	\node (n5) [black vertex] at (5,6.5)  {};
	\node (n6) [black vertex] at (8,6.5)  {};

	\node (label_n3)  at (9.5,4.5)  {$v_3$};
	\node (label_n4)  at (9.5,8.5)  {$v_4$};
	\node (label_n1)  at (3.5,8.5)  {$v_1$};
	\node (label_n2)  at (3.5,4.5)  {$v_2$};
	
	\node (label_n5)  at (4.5,6.8)  {$w_1$};
	\node (label_n6)  at (8.5,6.8)  {$w_2$};

    \draw[edge,digon] (n1) -- (n4);    
	  
  \foreach \from/\to in {n1/n2,n3/n2,n3/n4,n5/n2,n3/n6,n6/n5,n4/n5,n6/n1}
    \draw[edge,middlearrow={>}] (\from) -- (\to);    
\end{tikzpicture}
    \label{fig-3-anti-prove-P4:b}

}

\caption{\centering Illustration for the proof of Lemma~\ref{3-anti-circ-antiP-noS}.}
\label{fig-3-anti-prove-P4}
\end{center}
\end{figure}

Since $w_1 \to v_2 \lto v_3 \to \{w_2,v_4\}$, we conclude that $\{w_2,v_4\} \to w_1$. Since $v_1 \to v_2 \lto v_3 \to w_2$, $w_2 \to v_1$. Also, since $v_4 \to w_1 \lto w_2 \to v_1$, we conclude that $v_1 \to v_4$, and hence, $v_1 \digon v_4$ (see Figure~\ref{fig-3-anti-prove-P4:b}). Since $v_3 \to v_4 \lto v_1 \to v_2$, it follows that $v_2 \to v_3$, a contradiction because $v_2 \in B^+$, $v_3 \in B^-$ and $B^- \cup B^{\pm} \Rightarrow B^+$. Thus $N^+(v_3) \subseteq V(P_1)$ or $N^-(v_2) \subseteq V(P_2)$. Since $\{v_1,v_2,v_3,v_4\} \cap S = \emptyset$, the result follows by Lemma~\ref{arc-unique-N(u)-P-BE} with $u=v_3$ or by Lemma~\ref{arc-unique-N(u)-P-BE-dual} with $u=v_2$. 
\end{proof}

In the next lemmas, we show that if $D \in \D$, then $B^+$ and $B^-$ are stable. To do this, we show that there exists no arc $v_1v_2$ in $D$ such that $v_1 \in B^+ \cup B^-$ and $v_2 \in B^{\pm}$.

\setcounter{claim}{0}
\begin{lemma}
\label{3-anti-circ-N-arc-B+B+-}
Let $D$ be a $3$-anti-circulant digraph such that every proper induced subdigraph of $D$ satisfies the BE-property. Let $S$ be a maximum stable set in $D$. If $D \in \D$ and there are adjacent vertices $v_1,v_2$ in $V(D)$ such that $v_1 \in B^+ \cup B^-$ and $v_2 \in B^{\pm}$, then $D$ admits an $S_{BE}$-path partition.
\end{lemma}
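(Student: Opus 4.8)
\emph{Plan.} By the \PDD{} I may assume $v_1 \in B^+$. If the arc between $v_1$ and $v_2$ is $v_1 \to v_2$, then since $v_2 \in B^{\pm} \subseteq B^- \cup B^{\pm}$, Lemma~\ref{3-anti-circ-B-+RB+} already produces an $S_{BE}$-path partition; so I may assume the arc is $v_2 \to v_1$ (and in particular $v_1 \digon v_2$ does not occur, again by Lemma~\ref{3-anti-circ-B-+RB+}). Next I would bring in all the reductions proved so far: by Lemmas~\ref{3-anti-circ-B-+RB+} and~\ref{3-anti-circ-B-+RB+-dual} I may assume there is no arc from $B^+$ to $B^- \cup B^{\pm}$ nor from $B^+ \cup B^{\pm}$ to $B^-$; by Lemma~\ref{3-anti-circ-B+-stable} that $B^{\pm}$ is stable; by Lemma~\ref{3-anti-circ-has-TT-BE} that $D$ contains no transitive triangle meeting $S$; and by Lemma~\ref{3-anti-circ-antiP-noS} that $D$ has no anti-$P_4$ disjoint from $S$. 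Combined, these force $N^+(v_1) \subseteq S \cup B^+$, $N^+(v_2) \subseteq S \cup B^+$ and $N^-(v_1) \subseteq B^+ \cup B^- \cup B^{\pm}$, and (via Lemma~\ref{3-anti-circ-N(B+)<2}) $|N^-(v_1) \cap B^+| \le 1$.

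\emph{The two end-games.} I would aim to finish through one of the unique-neighbourhood lemmas. First, if $N^-(v_1) = \{v_2\}$, then Lemma~\ref{arc-unique-N(u)-P-BE-dual} applied with $P = v_2$ and $u = v_1$ gives an $S_{BE}$-path partition; so I may assume there is $w \in N^-(v_1)$ with $w \neq v_2$. Fix $y \in S$ with $v_1 \to y$ (which exists because $v_1 \in B^+$) and $z \in S$ with $v_2 \to z$ (which exists because $v_2 \in B^{\pm}$). The second target is the configuration $N^+(v_2) = \{v_1, y\}$ with $v_2 \to y$: in that case Lemma~\ref{arc-unique-u-disjoint-P-BE}, applied with $u_1 = v_2$, $u_2 = y$ and $P = v_1$, has all its hypotheses satisfied ($v_2 \notin S$, $\{v_2,y\} \cap \{v_1\} = \emptyset$, $v_1 \to y$, $N^+(v_2) \subseteq \{v_1,y\}$) and finishes the proof. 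So the work is to show that, under the above reductions, every case either reaches a contradiction or lands in one of these two end-games. The tool for this is the $3$-anti-circulant closure applied to anti-$P_4$'s built from the arcs at hand: from $w \to v_1 \lto v_2 \to z$ one gets $z \to w$, hence $w \in B^- \cup B^{\pm}$; from $v_2 \to z \lto v_1 \to y$ (when $v_1 \to z$) one gets $y \to v_2$; iterating with the out-arcs of $v_1$ and $v_2$ one should be able to pin down which vertex of $S$ is dominated by $v_2$ and to rule out stray out-neighbours of $v_2$. Whenever the closure creates a second predecessor of $v_1$ inside $B^+$, Lemma~\ref{3-anti-circ-N(B+)<2} is exceeded, a contradiction; whenever it creates a triple of vertices forming a transitive triangle containing a vertex of $S$, Lemma~\ref{3-anti-circ-has-TT-BE} applies; and whenever it creates a digon disjoint from $S$ together with a vertex of $S$ closing a $\overrightarrow{C_3}$ (or a digon meeting a vertex of $S$), Lemma~\ref{3-anti-circ-digon-C3} or Lemma~\ref{3-anti-circ-digon-TT} (and its dual) applies.

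\emph{Main obstacle.} The hard part is precisely this middle case analysis: systematically eliminating the possibilities for $N^+(v_2)$, $N^-(v_1)$ and the out-neighbours of $v_1$, keeping the four vertices of every putative anti-$P_4$ distinct so that the closure really fires, and correctly tracking membership in $B^+$, $B^-$, $B^{\pm}$ and $S$ at each step. The digon subcases look the most delicate, since one must decide whether a digon obtained from the closure meets $S$ (so Lemma~\ref{3-anti-circ-digon-C3} or Lemma~\ref{3-anti-circ-digon-TT}/Lemma~\ref{3-anti-circ-digon-TT-dual} is the right tool) or whether it combines with a vertex of $S$ into a directed or transitive triangle that must be resolved via Lemma~\ref{3-anti-circ-has-TT-BE}. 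Once all but the target configuration $N^+(v_2) = \{v_1,y\}$ is excluded, the conclusion is immediate from Lemma~\ref{arc-unique-u-disjoint-P-BE}.
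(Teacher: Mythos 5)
Your opening reductions match the paper's: assume $v_1\in B^+$ by duality, use Lemma~\ref{3-anti-circ-B-+RB+} to force $v_2\mapsto v_1$, and dispose of the case $N^-(v_1)=\{v_2\}$ via Lemma~\ref{arc-unique-N(u)-P-BE-dual} with $P=v_2$, $u=v_1$. But from that point on your proposal is a plan, not a proof, and the part you defer (``the work is to show that \dots every case either reaches a contradiction or lands in one of these two end-games'') is precisely the entire content of the lemma. In the paper this is the longest argument in the whole article: after fixing $v_3\in N^-(v_1)-v_2$ and showing $N^-(v_1)\cap B^+=\emptyset$ (a stronger statement than the bound $|N^-(v_1)\cap B^+|\le 1$ you import from Lemma~\ref{3-anti-circ-N(B+)<2}), it splits on $v_3\in B^{\pm}$ versus $v_3\in B^-$. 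The $B^-$ branch is short and does end in a unique-neighbourhood lemma (Lemma~\ref{arc-unique-N(u)-P-BE} with $P=v_2v_1$, $u=v_3$), but the $B^{\pm}$ branch requires four further structural claims (pinning down $N^-(\{y_2,y_3\})$, $N^+(\{v_2,v_3\})$, the possible common in-neighbour $v_4\in B^-$ of $v_2,v_3$, and $N^-(v_1)$) and then two terminal subcases that are \emph{not} applications of any of the prepared lemmas: one deletes $\{v_2,v_3\}$ and reinserts them onto the paths through the forced source $v_1$ and forced sink $v_4$, the other deletes $v_1$ and reassembles two length-one paths $v_2w_1$, $v_3w_2$ by hand. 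None of this is anticipated by your proposal.

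Moreover, your proposed second end-game --- funnelling everything into $N^+(v_2)=\{v_1,y\}$ with $v_2\to y$ for the \emph{same} $y\in S$ dominated by $v_1$, so that Lemma~\ref{arc-unique-u-disjoint-P-BE} applies --- is not where the actual analysis lands and there is no evident reason it should: in the paper's hard case the relevant vertices $y_2$ (dominated by $v_2$) and $y_3$ (dominated by $v_3$) are shown to be \emph{distinct}, and nothing forces either of them to be an out-neighbour of $v_1$. So the gap is not merely that the case analysis is laborious; the target configuration you propose to reduce to is likely unreachable, and the cases that genuinely remain require ad hoc path-partition constructions rather than the unique-neighbourhood lemmas. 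As written, the proposal does not constitute a proof of the lemma.
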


\begin{proof}
By the principle of directional duality, we may assume that $v_1 \in B^+$. Also, we may assume by Lemma~\ref{3-anti-circ-B-+RB+} that $B^- \cup B^{\pm} \Rightarrow B^+$. So $v_2 \mapsto v_1$. By definition of $B^+$, there exists a vertex $y_1$ in $S$ such that $v_1 \mapsto y_1$. By definition of $B^{\pm}$, there exists a vertex $y_2$ in $S$ such that $v_2 \to y_2$.

\begin{claim} \label{proof:claim1}
$N^-(v_1) \cap B^+ = \emptyset$.
\end{claim}

Towards a contradiction, suppose that there exists $v_3 \in B^+$ such that $v_3 \to v_1$. Since $v_3 \to v_1 \lto v_2 \to y_2$ and $D$ is $3$-anti-circulant, it follows that $y_2 \to v_3$, a contradiction by definition of $B^+$. Thus $N^-(v_1) \cap B^+ = \emptyset$. This finishes the proof of Claim~\ref{proof:claim1}. \newline

If $N^-(v_1) = \{v_2\}$, then since $\{v_1,v_2\} \cap S = \emptyset$, the result follows by Lemma~\ref{arc-unique-N(u)-P-BE-dual} with $P=v_2$ and $u=v_1$. So there exists a vertex $v_3$ in $N^-(v_1) - v_2$. By definition of $B^+$, $v_3 \notin S$. By Claim~\ref{proof:claim1}, $v_3 \in B^{\pm} \cup B^-$. The rest of proof is divided into two cases depending on whether $v_3 \in B^{\pm}$ or $v_3 \in B^{-}$ .\newline

\textbf{Case 1.} $v_3 \in B^{\pm}$. Recall that $v_2 \to y_2$ with $y_2 \in S$. Since $v_2 \in B^{\pm}$, we may assume by Lemma~\ref{3-anti-circ-B+-stable} that $v_2$ and $v_3$ are non-adjacent. By definition of $B^{\pm}$, there exists a vertex $y_3$ in $S$ such that $v_3 \to y_3$. Towards a contradiction, suppose that $y_3 = y_2$. Since $v_3 \to y_2 \lto v_2 \to v_1$, we conclude that $v_1 \to v_3$, a contradiction because $B^- \cup B^{\pm} \Rightarrow B^+$. So $y_3 \neq y_2$. Since $v_3 \to v_1 \lto v_2 \to y_2$, $y_2 \to v_3$. Also, since $v_2 \to v_1 \lto v_3 \to y_3$, $y_3 \to v_2$ (see Figure~\ref{fig-3-anti-circ-N-arc-B+B+-}).

\begin{figure}[htbp]
\begin{center}
    \tikzset{middlearrow/.style={
	decoration={markings,
		mark= at position 0.6 with {\arrow{#1}},
	},
	postaction={decorate}
}}

\tikzset{shortdigon/.style={
	decoration={markings,
		mark= at position 0.45 with {\arrow[inner sep=10pt]{<}},
		mark= at position 0.75 with {\arrow[inner sep=10pt]{>}},
	},
	postaction={decorate}
}}

\tikzset{digon/.style={
	decoration={markings,
		mark= at position 0.4 with {\arrow[inner sep=10pt]{<}},
		mark= at position 0.6 with {\arrow[inner sep=10pt]{>}},
	},
	postaction={decorate}
}}

\begin{tikzpicture}[scale = 0.7,auto=left]
  \draw (8,5) circle (37pt);  
  \node (n2) [black vertex] at (8,5.5)  {};
  \node (n3) [black vertex] at (8,4.5)  {};
  \draw (4,5) circle (37pt); 
  \node (n1) [black vertex] at (4,5)  {};
  
  \draw (10,8) ellipse (3.6 and 0.8);
  \node (n4) [black vertex] at (9,8)  {};
  \node (n5) [black vertex] at (11,8)  {};

  \node (label_n1) at (3.6,4.8)  {$v_1$};
  \node (label_B+) at (3.7,6.6)  {$B^+$};
  \node (label_B+-) at (7.6,6.6)  {$B^{\pm}$};
  \node (label_S) at (7,8.8)  {$S$};
  \node (label_n3) at (8.5,4.5)  {$v_3$};  
  \node (label_n2) at (7.8,5.9)  {$v_2$};
 
  \node (label_n4) at (9,8.4)  {$y_2$};
  \node (label_n5) at (11,8.4)  {$y_3$};

  \foreach \from/\to in {n2/n1,n3/n1,n2/n4,n3/n5,n4/n3,n5/n2}
    \draw[edge,middlearrow={>}] (\from) -- (\to);

\end{tikzpicture}

\caption{\centering Illustration for the proof of Lemma~\ref{3-anti-circ-N-arc-B+B+-}.}
\label{fig-3-anti-circ-N-arc-B+B+-}
\end{center}
\end{figure}

\begin{claim} \label{proof:claim1.1}
$N^-(\{y_2,y_3\}) \cap (B^- \cup B^{\pm}) = \{v_2,v_3\}$.
\end{claim}

By definition of $B^-$, $N^-(\{y_2,y_3\}) \cap B^- = \emptyset$. Towards a contradiction, suppose that there exists a vertex $v_4 \in B^{\pm}-\{v_2,v_3\}$ such that $v_4 \to y_i$ for some $i \in \{2,3\}$. Since $v_4 \to y_i \lto v_i \to v_1$, we conclude that $v_1 \to v_4$, a contradiction because $B^- \cup B^{\pm} \Rightarrow B^+$. So $N^-(\{y_2,y_3\}) \cap (B^- \cup B^{\pm}) = \{v_2,v_3\}$. This ends the proof of Claim~\ref{proof:claim1.1}. \newline

\begin{claim} \label{proof:claim1.2}
$N^+(\{v_2,v_3\})-S = \{v_1\}$. 
\end{claim}

Towards a contradiction, suppose that there exists $v_4 \in V(D)-(S \cup \{v_1\})$ such that $v_i \to v_4$ for some $i \in \{2,3\}$. Then, $\{v_1,v_2,v_3,v_4\}$ is an anti-$P_4$ disjoint from $S$, and hence, the result follows by Lemma~\ref{3-anti-circ-antiP-noS}. So we may assume that $N^+(\{v_2,v_3\})-S = \{v_1\}$. This finishes the proof of Claim~\ref{proof:claim1.2}. \newline

\begin{claim} \label{proof:claim1.3}
If there exists a vertex $v_4$ in $V(D)-(S \cup \{v_1,v_2,v_3\})$ such that $v_4 \to v_i$ for some $i \in \{2,3\}$, then $v_4 \in B^-$ and $N^+(v_4) = \{v_2,v_3\}$. Moreover, $N^-(\{v_2,v_3\})-S=\{v_4\}$.
\end{claim}

Without loss of generality, assume that $v_4 \to v_3$. Since $B^- \cup B^{\pm} \Rightarrow B^+$, $v_4 \notin B^+$. Since $\{v_2,v_3\} \subseteq B^{\pm}$, it follows by Lemma~\ref{3-anti-circ-B+-stable} that $v_4 \in B^-$ (see Figure~\ref{fig-3-anti-circ-N-arc-B+B+-2}). 

\begin{figure}[htbp]
\begin{center}
    \tikzset{middlearrow/.style={
	decoration={markings,
		mark= at position 0.6 with {\arrow{#1}},
	},
	postaction={decorate}
}}

\tikzset{shortdigon/.style={
	decoration={markings,
		mark= at position 0.45 with {\arrow[inner sep=10pt]{<}},
		mark= at position 0.75 with {\arrow[inner sep=10pt]{>}},
	},
	postaction={decorate}
}}

\tikzset{digon/.style={
	decoration={markings,
		mark= at position 0.4 with {\arrow[inner sep=10pt]{<}},
		mark= at position 0.6 with {\arrow[inner sep=10pt]{>}},
	},
	postaction={decorate}
}}

\begin{tikzpicture}[scale = 0.7,auto=left]
  \draw (8,5) circle (37pt);  
  \node (n2) [black vertex] at (8,5.5)  {};
  \node (n3) [black vertex] at (8,4.5)  {};
  \draw (4,5) circle (37pt); 
  \node (n1) [black vertex] at (4,5)  {};
  
  \draw (10,8) ellipse (3.6 and 0.8);
  \node (n4) [black vertex] at (9,8)  {};
  \node (n5) [black vertex] at (11,8)  {};
  \draw (12,5) circle (37pt);  
  \node (n6) [black vertex] at (12,5)  {};

  \node (label_n1) at (3.6,4.8)  {$v_1$};
  \node (label_B+) at (3.7,6.6)  {$B^+$};
  \node (label_B+-) at (7.6,6.6)  {$B^{\pm}$};
  \node (label_B+) at (11.7,6.6)  {$B^-$};
    
  \node (label_S) at (7,8.8)  {$S$};
  \node (label_n3) at (8.4,4.2)  {$v_3$};  
  \node (label_n2) at (7.8,5.9)  {$v_2$};
 
  \node (label_n4) at (9,8.4)  {$y_2$};
  \node (label_n5) at (11,8.4)  {$y_3$};
  \node (label_n1) at (12.4,4.8)  {$v_4$};

  \foreach \from/\to in {n2/n1,n3/n1,n2/n4,n3/n5,n4/n3,n5/n2,n6/n3}
    \draw[edge,middlearrow={>}] (\from) -- (\to);

\end{tikzpicture}

\caption{\centering Illustration for the proof of Lemma~\ref{3-anti-circ-N-arc-B+B+-}.}
\label{fig-3-anti-circ-N-arc-B+B+-2}
\end{center}
\end{figure}

By definition of $B^-$, $N^+(v_4) \cap S = \emptyset$. Now, we show that $N^+(v_4) \subseteq \{v_2,v_3\}$. First, suppose that $v_4 \to v_1$. Since $D \in \D$, there exists at least one digon in $D[\{v_1,v_3,v_4\}]$; otherwise, $D[\{v_1,v_3,v_4\}]$ is an induced transitive triangle. Since $B^- \cup B^{\pm} \Rightarrow B^+$, $v_3 \digon v_4$ which contradicts Claim~\ref{proof:claim1.2}. So $v_1 \notin N^+(v_4)$. Now, let $v_5$ be a vertex in $N^+(v_4)-\{v_2,v_3\}$. By definition of $B^-$ and since $v_4 \in B^-$, it follows that $v_5 \notin S$. Since $y_2 \to v_3 \lto v_4 \to v_5$, we conclude that $v_5 \to y_2$. Since $v_5 \to y_2 \lto v_2 \to v_1$, we conclude that $v_1 \to v_5$. Thus since $\{v_1,v_3,v_4,v_5\} \cap S = \emptyset$ and $v_1 \to v_5 \lto v_4 \to v_3$, the result follows by Lemma~\ref{3-anti-circ-antiP-noS}. So $N^+(v_4) \subseteq \{v_2,v_3\}$. If $N^+(v_4)=\{v_i\}$ for some $i \in \{2,3\}$, then it follows by Lemma~\ref{arc-unique-N(u)-P-BE} with $P=v_i$ and $u=v_4$ that $D$ admits an $S_{BE}$-path partition. Thus $N^+(v_4) = \{v_2,v_3\}$. Moreover, if $N^-(\{v_2,v_3\})-S \supset \{v_4\}$, then $D$ contains an anti-$P_4$ disjoint from $S$, and hence, the result follows by Lemma~\ref{3-anti-circ-antiP-noS}. Thus $N^-(\{v_2,v_3\})-S = \{v_4\}$. This ends the proof of Claim~\ref{proof:claim1.3}. \newline

\begin{claim} \label{proof:claim1.4}
If $N^-(\{v_2,v_3\})-S \neq \emptyset$, then $N^-(v_1)=\{v_2,v_3\}$. 
\end{claim}

Let $v_4$ be a vertex in $N^-(\{v_2,v_3\})-S$. It follows by Claim~\ref{proof:claim1.3} that $N^+(v_4)=\{v_2,v_3\}$ and $N^-(\{v_2,v_3\})-S=\{v_4\}$. Suppose that there exists a vertex $v_5$ in $N^-(v_1)- \{v_2,v_3\}$. By definition of $B^+$, $v_5 \notin S$. Since $v_5 \to v_1 \lto v_2 \to y_2$, $y_2 \to v_5$. Also, since $v_4 \to v_3 \lto y_2 \to v_5$, $v_5 \to v_4$. Since $\{v_1,v_3,v_4,v_5\} \cap S = \emptyset$ and $v_3 \to v_1 \lto v_5 \to v_4$, it follows by Lemma~\ref{3-anti-circ-antiP-noS} that $D$ admits an $S_{BE}$-path partition. So we may assume that $N^-(v_1)=\{v_2,v_3\}$. This ends the proof of Claim~\ref{proof:claim1.4}. \newline

The rest of proof is divided into two subcases depending on whether $N^-(\{v_2,v_3\})-S \neq \emptyset$ or $N^-(\{v_2,v_3\})-S = \emptyset$. \newline

\textbf{Subcase 1.} $N^-(\{v_2,v_3\})-S \neq \emptyset$. Let $v_4$ be a vertex in $N^-(\{v_2,v_3\})-S$. It follows by Claim~\ref{proof:claim1.3} that $N^+(v_4)=\{v_2,v_3\} $ and $N^-(\{v_2,v_3\})-S=\{v_4\}$. By Claim~\ref{proof:claim1.4}, $N^-(v_1)=\{v_2,v_3\}$. Let $D'=D-\{v_2,v_3\}$. Note that $v_1$ is a source and $v_4$ is a sink in $D'$. Since $\{v_2,v_3\} \cap S = \emptyset$, $S$ is a maximum stable set in $D'$. By hypothesis, $D'$ is BE-perfect. Let $\sP'$ be an $S_{BE}$-path partition of $D'$. Let $P_1,P_2$ be distinct paths in $\sP'$ such that $P_1$ starts at $v_1$ and $P_2$ ends at $v_4$. Thus the collection $(\sP'-\{P_1,P_2\}) \cup \{v_2P_1,P_2v_3\}$ is an $S_{BE}$-path partition of $D$. \newline

\textbf{Subcase 2.} $N^-(\{v_2,v_3\})-S= \emptyset$. By Claim~\ref{proof:claim1.2}, $N(\{v_2,v_3\})-S = \{v_1\}$. Let $D' = D-v_1$. Since $v_1 \notin S$, $S$ is a maximum stable set in $D'$. Let $\sP'$ be an $S_{BE}$-path partition of $D'$. Let $P_1$ be a path in $\sP'$ such that $v_2 \in V(P_1)$ and let $P_2$ be a path in $\sP'$ such that $v_3 \in V(P_2)$. In $D'$, $N(\{v_2,v_3\}) \subset S$. So it follows that both $P_1$ and $P_2$ have length one. If $P_1$ ends at $v_2$ or $P_2$ ends at $v_3$, then since $v_2 \to v_1$ and $v_3 \to v_1$, the collection $(\sP'-\{P_1\}) \cup \{P_1v_1\}$ or $(\sP'-\{P_2\}) \cup \{P_2v_1\}$ is an $S_{BE}$-path partition of $D$. Thus $P_1 = v_2w_1$ and $P_2 = v_3w_2$ with $w_1,w_2 \in S$. Since $\{v_2,v_3\} \to v_1$, $v_2 \to w_1$ and $v_3 \to w_2$, we conclude that $w_1 \to v_3$ and $w_2 \to v_2$. Thus the collection $(\sP'-\{P_1,P_2\}) \cup \{w_2v_2v_1,w_1v_2\}$ is an $S_{BE}$-path partition of $D$. \newline

\textbf{Case 2.} $v_3 \in B^-$. By definition of $B^-$, $N^+(v_3) \cap S = \emptyset$. If there exists a vertex $v_4$ in $N^+(v_3) - \{v_1,v_2\}$, then since $\{v_1,v_2,v_3,v_4\} \cap S = \emptyset$ and $v_2 \to v_1 \lto v_3 \to v_4$, the result follows by Lemma~\ref{3-anti-circ-antiP-noS}. Thus $N^+(v_3) \subseteq \{v_1,v_2\}$, and hence, since $\{v_1,v_2,v_3\} \cap S = \emptyset$, the result follows by Lemma~\ref{arc-unique-N(u)-P-BE} with $P=v_2v_1$ and $u=v_3$. This ends the proof. 

\end{proof}

Now, we show that if $D \in \D$, then we may assume that there exists no arc $v_1v_2$ in $D$ such that $v_1 \in B^+$ and $v_2 \in B^-$.

\begin{lemma}
\label{3-anti-circ-N-arc-B+B-}
Let $D$ be a $3$-anti-circulant digraph such that every proper induced subdigraph of $D$ satisfies the BE-property. Let $S$ be a maximum stable set of $D$. If $D \in \D$ and there are adjacent vertices $v_1,v_2$ in $V(D)$ such that $v_1 \in B^+$ and $v_2 \in B^-$, then $D$ admits an $S_{BE}$-path partition.
\end{lemma}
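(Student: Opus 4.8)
The plan is to follow the same pattern as in the preceding lemmas (notably Lemma~\ref{3-anti-circ-N-arc-B+B+-}): reduce to a situation where the neighborhoods of $v_1$ and $v_2$ are tightly controlled, and then invoke one of the reduction lemmas from Section~\ref{struc-results}. First I would fix notation: by the \PDD{} we may work with $v_1 \in B^+$ and $v_2 \in B^-$ with $v_1 \to v_2$, and we may assume by Lemma~\ref{3-anti-circ-B-+RB+} (and its dual) that $B^- \cup B^{\pm} \Rightarrow B^+$ and $B^+ \cup B^{\pm} \Rightarrow B^-$; in particular $v_2 \mapsto v_1$ cannot happen, so $v_1 \mapsto v_2$. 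By Lemma~\ref{3-anti-circ-N-arc-B+B+-} we may also assume there is no arc between $B^+ \cup B^-$ and $B^{\pm}$. By definition of $B^+$ and $B^-$ pick $y_1 \in S$ with $v_1 \to y_1$ and $y_2 \in S$ with $y_2 \to v_2$; if $y_1 = y_2$ then $D[\{v_1,v_2,y_1\}]$ is a transitive triangle meeting $S$, and we are done by Lemma~\ref{3-anti-circ-has-TT-BE}, so assume $y_1 \neq y_2$.

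Next I would constrain $N^+(v_1)$ and $N^-(v_2)$. The key tool is that any out-neighbor $w$ of $v_1$ other than $v_2,y_1$ together with $v_2$ forms an anti-$P_4$ $v_2 \lto v_1 \to w$ completed by some vertex, or more directly: if $w \notin S$ then $\{w, v_1, v_2\}$ plus a suitable fourth vertex yields an anti-$P_4$ disjoint from $S$, so Lemma~\ref{3-anti-circ-antiP-noS} applies. Concretely, I expect to show (using 3-anti-circularity with the anti-$P_4$s $v_2 \lto v_1 \to w$ and $w' \to v_2 \lto v_1$, where $w' \in N^-(v_2)$) that any ``extra'' neighbor forces either an anti-$P_4$ disjoint from $S$, a transitive triangle hitting $S$, a digon configuration handled by Lemma~\ref{3-anti-circ-digon-TT} or its dual, or a violation of Lemma~\ref{3-anti-circ-N(B+)<2} or of $B^- \cup B^{\pm} \Rightarrow B^+$. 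This should whittle things down to, say, $N^+(v_1) \subseteq \{v_2\} \cup S$ and $N^-(v_2) \subseteq \{v_1\} \cup S$, and then further that $N^+(v_1) \cap (V(D)-S) = \{v_2\}$ and $N^-(v_2)\cap(V(D)-S)=\{v_1\}$.

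Once the neighborhoods are this restricted, I would finish by a direct path-partition surgery analogous to Subcase~2 of Lemma~\ref{3-anti-circ-N-arc-B+B+-}: delete one of $v_1,v_2$ (or both), obtain an $S_{BE}$-path partition $\sP'$ of the smaller digraph (which is BE-diperfect by hypothesis, since $S$ remains maximum as the deleted vertices are outside $S$), locate the paths through the few remaining neighbors — which must be short paths since the surviving neighbors lie in $S$ — and splice $v_1$ and/or $v_2$ back in using the arcs $v_1 \to v_2$, $v_1 \to y_1$, $y_2 \to v_2$, checking in each case that the resulting paths still have exactly one vertex of $S$ each and that that vertex is an endpoint. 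The main obstacle I anticipate is the bookkeeping in the neighborhood-reduction step: there are several ways an extra neighbor of $v_1$ or $v_2$ can sit relative to $B^+, B^-, B^{\pm}, S$, and for each one must verify that 3-anti-circularity forces a configuration already handled by an earlier lemma. The danger is a stray case — e.g. an out-neighbor of $v_1$ lying in $B^-$ but not reachable by the anti-$P_4$ argument, or a digon appearing between $v_1$ (or $v_2$) and a third vertex — so I would be careful to enumerate $N^+(v_1)$ and $N^-(v_2)$ exhaustively, and to invoke Lemma~\ref{3-anti-circ-digon-TT}/\ref{3-anti-circ-digon-TT-dual} and Lemma~\ref{3-anti-circ-digon-C3} whenever a digon touching $S$ shows up.
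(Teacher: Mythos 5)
Your setup contains a directional error that derails the whole plan. After invoking Lemma~\ref{3-anti-circ-B-+RB+} (which lets one assume there is no arc from $B^+$ into $B^- \cup B^{\pm}$), the case $v_1 \to v_2$ is already disposed of; the only configuration this lemma still has to handle is the arc $v_2 \to v_1$ from $B^-$ into $B^+$, i.e.\ $v_2 \mapsto v_1$. You conclude the opposite ($v_1 \mapsto v_2$) by additionally assuming ``$B^+ \cup B^{\pm} \Rightarrow B^-$'' from the dual lemma, but Lemma~\ref{3-anti-circ-B-+RB+-dual} only forbids arcs \emph{from} $B^+ \cup B^{\pm}$ \emph{into} $B^-$ (in the paper's notation, $B^- \Rightarrow B^+ \cup B^{\pm}$); it says nothing about arcs out of $B^-$ into $B^+$. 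Nor does the \PDD{} help here: inverting the digraph swaps $B^+$ with $B^-$ and reverses the arc, so an arc from $B^-$ to $B^+$ is sent to an arc from $B^-$ to $B^+$ again --- the configuration is self-dual. Under the assumptions you actually state, the vertices $v_1,v_2$ would be non-adjacent, so your case analysis targets a configuration that cannot occur, and the neighborhoods you propose to control ($N^+(v_1)$ and $N^-(v_2)$) are the wrong ones.

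Once the configuration is corrected to $v_2 \mapsto v_1$, the intended argument is much shorter than what you sketch and needs none of the digon, transitive-triangle, or vertex-deletion machinery. The paper argues: if $N^-(v_1)=\{v_2\}$, apply Lemma~\ref{arc-unique-N(u)-P-BE-dual} with $P=v_2$ and $u=v_1$; otherwise pick $v_3 \in N^-(v_1)-v_2$ (necessarily $v_3 \notin S$ since $v_1 \in B^+$), observe that any $v_4 \in N^+(v_2)-\{v_1,v_3\}$ yields the anti-$P_4$ $v_3 \to v_1 \lto v_2 \to v_4$ disjoint from $S$, which is handled by Lemma~\ref{3-anti-circ-antiP-noS}, and in the remaining case $N^+(v_2) \subseteq \{v_1,v_3\}$ finish with Lemma~\ref{arc-unique-N(u)-P-BE} applied to $P=v_3v_1$ and $u=v_2$. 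Your general instinct --- shrink the relevant neighborhoods via anti-$P_4$'s disjoint from $S$ and then invoke the reduction lemmas of Section~\ref{struc-results} --- is the right one, but as written the proposal does not prove the lemma.
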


\begin{proof}
We may assume by Lemma~\ref{3-anti-circ-B-+RB+} that $B^- \cup B^{\pm} \Rightarrow B^+$. So $v_2 \mapsto v_1$. If $N^-(v_1) = \{v_2\}$, then since $\{v_1,v_2\} \cap S = \emptyset$, the result follows by Lemma~\ref{arc-unique-N(u)-P-BE-dual} with $P=v_2$ and $u=v_1$. So there exists a vertex $v_3$ in $N^-(v_1) - v_2$. Since $v_1 \in B^+$, $v_3 \notin S$. Since $v_2 \in B^-$, $N^+(v_2) \cap S =  \emptyset$. If there exists a vertex $v_4$ in $N^+(v_2)-\{v_1,v_3\}$, then since $\{v_1,v_2,v_3,v_4\} \cap S = \emptyset$ and $v_3 \to v_1 \lto v_2 \to v_4$, the result follows by Lemma~\ref{3-anti-circ-antiP-noS}. So we may assume that $N^+(v_2) \subseteq \{v_1,v_3\}$. Since $\{v_1,v_2,v_3\} \cap S = \emptyset$, it follows by Lemma~\ref{arc-unique-N(u)-P-BE} with $P=v_3v_1$ and $u=v_2$ that $D$ admits an $S_{BE}$-path partition. This finishes the proof.
\end{proof}

We show next that we may assume that $B^+ \cup B^-$ is a stable set.

\begin{lemma}
\label{3-anti-circ-N-arc-B+B-stable}
Let $D$ be a $3$-anti-circulant digraph such that every proper induced subdigraph of $D$ satisfies the BE-property. Let $S$ be a maximum stable set of $D$. If $D \in \D$ and $B^+ \cup B^-$ is not a stable set, then $D$ admits an $S_{BE}$-path partition.
\end{lemma}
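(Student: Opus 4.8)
The plan is to reduce the statement ``$B^+ \cup B^-$ is not stable'' to the previously established lemmas by a case analysis on which of the three sets $B^+$, $B^-$, $B^{\pm}$ the two endpoints of a witnessing arc belong to. So suppose $v_1, v_2$ are adjacent vertices of $B^+ \cup B^-$; we may clearly assume the arc goes, say, $v_1 \to v_2$. Since $v_1, v_2 \in B^+ \cup B^-$, each of them lies in $B^+$ or in $B^-$, giving four combinations to treat.

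First I would dispose of the ``mixed'' cases. If $v_1 \in B^+$ and $v_2 \in B^-$, then Lemma~\ref{3-anti-circ-N-arc-B+B-} applies directly and $D$ admits an $S_{BE}$-path partition. If $v_1 \in B^-$ and $v_2 \in B^+$, then we have an arc from $B^-$ into $B^+$, which contradicts the assumption (which we may make, by Lemma~\ref{3-anti-circ-B-+RB+}) that $B^- \cup B^{\pm} \Rightarrow B^+$; hence again $D$ admits an $S_{BE}$-path partition by that lemma. The remaining cases are $v_1, v_2 \in B^+$ and $v_1, v_2 \in B^-$, and by the principle of directional duality it suffices to handle one of them, say $v_1, v_2 \in B^+$.

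So assume $v_1 \to v_2$ with both in $B^+$. By definition of $B^+$ there is $y \in S$ with $v_2 \to y$. The idea is to build an anti-$P_4$ disjoint from $S$ and invoke Lemma~\ref{3-anti-circ-antiP-noS}. Concretely, I would look for an in-neighbor $v_0$ of $v_2$ other than $v_1$, or an out-neighbor of $v_1$ other than $v_2$, or an out-neighbor $v_3$ of $v_2$ other than $y$ and $v_1$: in each of these situations we obtain four vertices inducing an anti-$P_4$ avoiding $S$ (using $3$-anti-circularity and the fact that $B^+ \Rightarrow S$ and $B^- \cup B^{\pm} \Rightarrow B^+$ to control the arcs), and Lemma~\ref{3-anti-circ-antiP-noS} finishes. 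If none of these exist, then the neighborhood of the arc $v_1 \to v_2$ is severely constrained — essentially $N^-(v_2) = \{v_1\}$ or $N^+(v_1) = \{v_2\}$, or $N^+(v_2) \subseteq S \cup \{v_1\}$ with small in-degree — and then one of the earlier ``degree-one'' lemmas (Lemma~\ref{arc-unique-N(u)-P-BE}, Lemma~\ref{arc-unique-N(u)-P-BE-dual}, or Lemma~\ref{3-anti-circ-N(B+)<2} combined with the anti-$P_4$ lemma) produces the desired path partition.

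The main obstacle I anticipate is the bookkeeping in the $v_1, v_2 \in B^+$ case: after excluding the ``large neighborhood'' situations that give an anti-$P_4$, one must carefully verify that the surviving structure really does fall under one of the small-neighborhood lemmas, since $v_1$ and $v_2$ both sit in $B^+$ and may have out-neighbors in $S$ that are not immediately useful. Getting the arcs right here requires invoking $3$-anti-circularity on triples involving the $S$-vertex $y$ and any extra neighbor, plus Lemma~\ref{3-anti-circ-N(B+)<2} to bound $|N^-(v_2) \cap B^+|$, so that the only remaining configurations are exactly the ones handled by Lemma~\ref{arc-unique-N(u)-P-BE} (with $u = $ the relevant $B^+$ vertex) or its dual. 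Everything else should be a routine chain of $3$-anti-circularity deductions.
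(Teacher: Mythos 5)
Your overall strategy (case analysis on which of $B^+,B^-$ the endpoints lie in, reduction of the mixed case to Lemma~\ref{3-anti-circ-N-arc-B+B-}, duality to reduce to $\{v_1,v_2\}\subseteq B^+$) matches the paper, but the decisive case $\{v_1,v_2\}\subseteq B^+$ is not actually proved, and the specific routes you propose for it do not work. First, a small but real error: you claim an arc from $B^-$ into $B^+$ contradicts $B^-\cup B^{\pm}\Rightarrow B^+$. It does not; $X\Rightarrow Y$ forbids arcs from $Y$ to $X$, so $B^-\Rightarrow B^+$ forbids arcs from $B^+$ into $B^-$, not the reverse. That subcase survives anyway because Lemma~\ref{3-anti-circ-N-arc-B+B-} is stated for \emph{adjacent} vertices, but your justification is wrong as written.

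The genuine gap is in the $B^+$--$B^+$ case. Your plan is to manufacture an anti-$P_4$ disjoint from $S$ and invoke Lemma~\ref{3-anti-circ-antiP-noS}, but the configurations you list do not deliver one: an out-neighbor of $v_1$ or of $v_2$ together with the arc $v_1\to v_2$ gives a directed path or an out-star, not a pattern $a\to b\lto c\to d$; and the one configuration that does give an anti-$P_4$, namely a second in-neighbor $v_3$ of $v_2$ yielding $v_3\to v_2\lto v_1\to y$ with $y\in S$, is \emph{not} disjoint from $S$, so Lemma~\ref{3-anti-circ-antiP-noS} cannot be applied to it. The argument that actually closes this case is shorter and different in character: take any $v_3\in N^-(v_2)-\{v_1\}$; then $v_3\notin S$ (since $v_2\in B^+$ and $B^+\Rightarrow S$), and $v_3\notin B^-\cup B^{\pm}$ by Lemmas~\ref{3-anti-circ-N-arc-B+B-} and~\ref{3-anti-circ-N-arc-B+B+-}, so $v_3\in B^+$; now $3$-anti-circularity applied to $v_3\to v_2\lto v_1\to y$ (with $y\in S$, $v_1\to y$ from the definition of $B^+$) forces $y\to v_3$, contradicting $v_3\in B^+$. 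Hence $N^-(v_2)=\{v_1\}$, and Lemma~\ref{arc-unique-N(u)-P-BE-dual} with $P=v_1$ and $u=v_2$ produces the $S_{BE}$-path partition. You name all of these ingredients, but you never assemble them, and the assembly you sketch (anti-$P_4$ disjoint from $S$ plus ``degree-one'' lemmas) would not go through as described.
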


\begin{proof}
If there are adjacent vertices $v_1,v_2$ in $V(D)$ such that $v_1 \in B^+$ and $v_2 \in B^-$, then the result follows by Lemma~\ref{3-anti-circ-N-arc-B+B-}. Let $v_1v_2$ be an arc in $D[B^+ \cup B^-]$. By the principle of directional duality, we may assume that $\{v_1,v_2\} \subseteq B^+$. Towards a contradiction, suppose that $N^-(v_2) \supset \{v_1\}$. Let $v_3$ be a vertex in $N^-(v_2)-v_1$. By definition of $B^+$, $v_3 \notin S$. Moreover, we may assume by Lemmas~\ref{3-anti-circ-N-arc-B+B-} and \ref{3-anti-circ-N-arc-B+B+-} that $v_3 \in B^+$. By definition of $B^+$, let $y$ be a vertex in $S$ such that $v_1 \to y$. Since $v_3 \to v_2 \lto v_1 \to y$, we conclude that $y \to v_3$, a contradiction by definition of $B^+$. Thus $N^-(v_2) = \{v_1\}$. Since $\{v_1,v_2\} \cap S = \emptyset$, it follows by Lemma~\ref{arc-unique-N(u)-P-BE-dual} with $P=v_1$ and $u=v_2$ that $D$ admits an $S_{BE}$-path partition. 
\end{proof}

Finally, we are ready for the main result of this subsection.

\begin{theorem}
\label{3-anti-circ-be}
Let $D$ be a $3$-anti-circulant digraph. If $D \in \D$, then $D$ is BE-diperfect.
\end{theorem}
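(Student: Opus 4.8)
The plan is to argue by induction on $|V(D)|$. Since both the class of $3$-anti-circulant digraphs and the class $\D$ are closed under taking induced subdigraphs, every proper induced subdigraph of $D$ is again a $3$-anti-circulant digraph belonging to $\D$ with fewer vertices, hence BE-diperfect by the induction hypothesis; in particular every proper induced subdigraph of $D$ satisfies the BE-property. So it suffices to show that $D$ itself satisfies the BE-property. Fix a maximum stable set $S$ of $D$ and consider the partition $V(D)=S\cup B^+\cup B^-\cup B^{\pm}$.

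First I would dispose of the ``bad'' configurations one at a time, using the lemmas already established. If $D$ contains a transitive triangle meeting $S$, then Lemma~\ref{3-anti-circ-has-TT-BE} produces an $S_{BE}$-path partition and we are done; so assume it does not. Then Lemma~\ref{3-anti-circ-B+-stable} lets us assume $B^{\pm}$ is stable, Lemma~\ref{3-anti-circ-N-arc-B+B+-} lets us assume there is no arc between $B^+\cup B^-$ and $B^{\pm}$ (in either direction), and Lemma~\ref{3-anti-circ-N-arc-B+B-stable} lets us assume $B^+\cup B^-$ is a stable set, which already subsumes the non-adjacency of $B^+$ and $B^-$. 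In each case, if the corresponding configuration occurs then $D$ admits an $S_{BE}$-path partition and we are finished, so we may assume none of them occurs.

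Under these assumptions, $B^+$, $B^-$ and $B^{\pm}$ are each stable and $B^+\cup B^-$ is non-adjacent to $B^{\pm}$, so $T:=B^+\cup B^-\cup B^{\pm}$ is a stable set of $D$. Because $S$ is a maximal stable set, every vertex outside $S$ has a neighbour in $S$, hence $N(S)=V(D)\setminus S=T$; and because $S$ is a maximum stable set while $T$ is stable, $|N(S)|=|T|\le|S|$. Therefore Lemma~\ref{stable_set_S_menor_vizinhanca} applies and $D$ satisfies the BE-property, which completes the induction.

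Essentially all of the work has already been done in the preparatory lemmas, above all the long case analysis of Lemma~\ref{3-anti-circ-N-arc-B+B+-} and the anti-$P_4$ elimination of Lemma~\ref{3-anti-circ-antiP-noS}, so at this stage the only care required is bookkeeping: checking that the chain of ``we may assume'' reductions is logically consistent (each reduction relies only on hypotheses still available after the previous ones, noting that the internal uses of Lemma~\ref{3-anti-circ-B-+RB+} and its dual are harmless) and that at the end the three sets $B^+$, $B^-$, $B^{\pm}$ are genuinely pairwise non-adjacent rather than merely non-adjacent in one direction. I expect that to be the main, and rather minor, obstacle; no new structural idea is needed.
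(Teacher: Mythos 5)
Your proposal is correct and follows essentially the same route as the paper: reduce to a minimal counterexample (your induction), invoke Lemmas~\ref{3-anti-circ-B+-stable}, \ref{3-anti-circ-N-arc-B+B+-}, \ref{3-anti-circ-N-arc-B+B-} and \ref{3-anti-circ-N-arc-B+B-stable} to conclude that $B^+\cup B^-\cup B^{\pm}$ is stable, and finish with Lemma~\ref{stable_set_S_menor_vizinhanca}. Your explicit observation that $N(S)=V(D)\setminus S$ by maximality of $S$ is a slightly more careful justification of the final inequality than the paper gives, but the argument is the same.
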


\begin{proof}
Let $S$ be a maximum stable set of $D$. Since every induced subdigraph of $D$ is also a $3$-anti-circulant digraph, it suffices to show that $D$ satisfies the BE-property. Towards a contradiction, suppose the opposite and let $D$ be a counterexample with the smallest number of vertices. Note that if $D'$ is a proper induced subdigraph of $D$, then $D'$ is a $3$-anti-circulant digraph, and hence, by the minimality of $D$, it follows that $D'$ satisfies the BE-property. Thus $D$ does not satisfy the BE-property. It follows by Lemmas~\ref{3-anti-circ-B+-stable} and \ref{3-anti-circ-N-arc-B+B-stable} that both $B^{\pm}$ and $B^+ \cup B^-$ are stable. Thus it follows by Lemmas~\ref{3-anti-circ-N-arc-B+B+-} and \ref{3-anti-circ-N-arc-B+B-} that $B^+ \cup B^- \cup B^{\pm}$ is stable. Since $S$ is a maximum stable set of $D$, $ \vert  S \vert   \geq  \vert  B^+ \cup B^- \cup B^{\pm} \vert  $. Thus we conclude by Lemma~\ref{stable_set_S_menor_vizinhanca} that $D$ satisfies the BE-property, a contradiction. This ends the proof.
\end{proof}

\subsection{Berge's conjecture}

In this subsection, we verify Conjecture~\ref{conj_berge} for $3$-anti-circulant digraphs. Recall that every $3$-anti-circulant digraph belongs to $\sB$. The proof is divided into two cases depending on whether $D$ contains an induced transitive triangle or not.

\setcounter{claim}{0}
\begin{lemma}
\label{3-anti-circ-TT-berge}
Let $D$ be a $3$-anti-circulant digraph such that every proper induced subdigraph of $D$ satisfies the $\alpha$-property. If $D$ contains an induced transitive triangle $T$, then $D$ satisfies the $\alpha$-property. 
\end{lemma}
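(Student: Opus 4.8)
The plan is to argue by contradiction on a minimal counterexample, mirroring the structure of the BE-case (Theorem~\ref{3-anti-circ-be}). Let $D$ be a $3$-anti-circulant digraph containing an induced transitive triangle $T$, and suppose every proper induced subdigraph of $D$ satisfies the $\alpha$-property. Fix a maximum stable set $S$ of $D$; we must produce an $S$-path partition. Write $V(T)=\{a,b,c\}$ with $a\to b$, $b\to c$ and $a\to c$ (so $T$ has no digon and no repeated arc, being \emph{induced}). First I would dispose of the case $V(T)\cap S\neq\emptyset$: since $S$ is stable and $T$ has three pairwise-adjacent vertices, at most one of $a,b,c$ lies in $S$. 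If $b\in S$, then $N^-(b)\supseteq\{a\}$ and $N^+(b)\supseteq\{c\}$, and I would try to exploit $a\to b\to c$ together with $a\to c$ to contract the triangle; more cleanly, one can handle $a\in S$ via Lemma~\ref{arc-unique-N(u)-P-alpha-dual} or Lemma~\ref{3-anti-N(u)-S-alpha}-type reasoning applied to the arc $a\to c$ (after checking the relevant neighborhood is confined), and dually $c\in S$, and $b\in S$ by a direct surgery.

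**The main case: $V(T)\cap S=\emptyset$.**

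So assume $\{a,b,c\}\cap S=\emptyset$. The strategy is to show $N^+(a)$ or $N^-(c)$ is small enough that one of Lemma~\ref{arc-unique-N(u)-P-alpha} / Lemma~\ref{arc-unique-N(u)-P-alpha-dual} / Lemma~\ref{arc-unique-u-disjoint-P-BE}-style results fires. Concretely: suppose there is a vertex $w\in N^+(a)-\{b,c\}$. Then $\{w,a,b,c\}$ with $w\lto a\to b$, $c\lto a$... we use the anti-$P_4$ $b\lto a\to c$? No — I need an anti-$P_4$, i.e.\ a pattern $x\to y\lto z\to t$. Observe $b\to c\lto a\to w$ is exactly an anti-$P_4$ ($b\to c$, $a\to c$, $a\to w$), so $3$-anti-circulance gives $w\to b$. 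Likewise from $w\to b\lto a\to c$ we get $c\to w$, hence $c\digon w$ is false; rather $c\to w$, so now $\{w,a,b,c\}$ supports further structure. Iterating, I would aim to show that every out-neighbor of $a$ outside $\{b,c\}$ also dominates $b$ and is dominated by $c$, which forces the local picture to collapse. The cleanest route is: argue $N^+(a)\subseteq V(P)$ for the path $P=bc$ (so $N^+(a)\subseteq\{b,c\}$ after ruling out extra vertices), and since $c\lto a$ and $N^-(c)\subseteq V(P)\cup\{a\}$ holds once $N^-(c)$ is pinned down, invoke Lemma~\ref{arc-unique-N(u)-P-alpha} with $u=a$ and path $P=bc$ — this requires $(V(P)-\{b\})\cap S=\emptyset$, i.e.\ $c\notin S$, which holds, and needs $c\to u$ for the final vertex... here I should instead set $P=bc$, check $c$ is the final vertex, and note $N^-$ of the relevant vertex lands in $V(P)$. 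Dually, if instead $N^-(c)$ has a vertex outside $\{a,b\}$, the inverse digraph is again $3$-anti-circulant and the transitive triangle reverses, so by the \PDD{} the symmetric argument applies.

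**The expected obstacle.**

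The hard part, as in the BE subsection, will be the bookkeeping that rules out "stray" neighbors: showing that an extra vertex $w\in N^+(a)\setminus\{b,c\}$ (or $w\in N^-(c)\setminus\{a,b\}$) always creates a configuration that either contradicts maximality/stability of $S$, or produces a \emph{second} induced transitive triangle $T'$ with $V(T')\cap S\neq\emptyset$ (handled by the first case or by a companion lemma), or directly lets one of the auxiliary path-surgery lemmas apply. The subtlety is that, unlike the BE-case, we only have the $\alpha$-property lemmas (Lemmas~\ref{3-anti-N(u)-S-alpha}, \ref{arc-unique-N(u)-P-alpha}, \ref{arc-unique-N(u)-P-alpha-dual}), which are weaker, so more care is needed to confine neighborhoods before invoking them. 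I expect the proof to proceed through two or three Claims: (1) $T$ induces enough $3$-anti-circulant constraints that any extra out-neighbor of $a$ forms a controlled pattern; (2) this pattern either collapses $N^+(a)$ to $\{b,c\}$ or yields an anti-$P_4$/transitive triangle meeting $S$; (3) with $N^+(a)=\{b,c\}$ (or the dual $N^-(c)=\{a,b\}$), a single application of Lemma~\ref{arc-unique-N(u)-P-alpha} (resp.\ its dual) finishes. Then, once this lemma and its natural companion (the case $V(T)\cap S=\emptyset$ leading to a reduction) are in hand, the minimal-counterexample argument concludes the theorem just as Theorem~\ref{3-anti-circ-be} does for the BE-property.
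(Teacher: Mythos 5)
There is a genuine gap: your case structure is inverted relative to what actually works, and the case you designate as ``main'' is left unresolved by an argument that would not close. In the paper's proof (with $V(T)=\{v_1,v_2,v_3\}$, $v_1\mapsto v_2$, $\{v_1,v_2\}\mapsto v_3$), the situation $\{v_1,v_2\}\cap S=\emptyset$ --- which subsumes your ``main case'' $V(T)\cap S=\emptyset$ --- is the one disposed of quickly: one first proves that $\vert N^-(v_3)\vert\le 3$ and that any $v_4\in N^-(v_3)-\{v_1,v_2\}$ satisfies $v_4\to v_1$ and $v_2\to v_4$ (a short chain of anti-$P_4$ deductions ending in the contradiction $v_2\digon v_3$ if there were two such vertices), then deletes $\{v_1,v_2\}$, takes an $S$-path partition of the remainder, and splices the arc $v_1v_2$ back in either at the start of the path through $v_3$ or between $v_4$ and $v_3$. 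Together with its directional dual this forces $v_2\in S$, and it is that case --- the middle vertex of the triangle lying in $S$, which you wave away as ``a direct surgery'' --- that carries the real content: one uses the bound on $N^-(v_3)$ to build a path $P=v_2v_4v_1$ (or $P=v_2wv_1$ for $w\in N^+(v_2)-\{v_1,v_3\}$, using $v_1\to v_3\lto v_2\to w\Rightarrow w\to v_1$) with $N^-(v_3)\subseteq V(P)$ and then applies Lemma~\ref{arc-unique-N(u)-P-alpha} with $u=v_3$, falling back on Lemma~\ref{3-anti-N(u)-S-alpha}\ref{3-anti-N(u)-S-alpha:ii} when $N^+(v_2)=\{v_3\}$.

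Concretely, two steps of your plan fail. First, your strategy of confining $N^+(a)$ to $\{b,c\}$ has no support: for a stray $w\in N^+(a)-\{b,c\}$ you correctly derive $w\to b$ and $c\to w$, but this configuration is perfectly consistent (two such vertices merely form a digon with each other), so ``the local picture collapses'' does not follow; the quantity that can actually be bounded is $N^-(v_3)$, which is also the one the path-extension lemma needs. Second, your proposed invocation of Lemma~\ref{arc-unique-N(u)-P-alpha} with $u=a$ and $P=bc$ requires the final vertex of $P$ to dominate $u$, i.e.\ $c\to a$, which is false in an induced transitive triangle; you notice the mismatch but never repair it. Finally, note that the escape route ``produce a second transitive triangle meeting $S$'' has no companion lemma in the $\alpha$-property setting (the analogue, Lemma~\ref{3-anti-circ-has-TT-BE}, lives in the BE subsection and assumes $D\in\D$), so it cannot be used to discharge the residual cases.
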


\begin{proof}
Let $S$ be a maximum stable set in $D$. Let $V(T)=\{v_1,v_2,v_3\}$. Without loss of generality, assume that $\{v_1,v_2\} \mapsto v_3$ and $v_1 \mapsto v_2$. First, we prove some claims. \newline

\begin{claim} \label{prof2:claim1}
$ \vert N^-(v_3) \vert  \leq 3$. Moreover, if there exists $v_4 \in N^-(v_3)-\{v_1,v_2\}$, then $v_4 \to v_1$ and $v_2 \to v_4$. 
\end{claim}

Towards a contradiction, suppose that there are distinct vertices $v_4,v_5$ in $N^-(v_3)-\{v_1,v_2\}$. Since $\{v_4,v_5\} \to v_3 \lto v_1 \to v_2$ and $D$ is $3$-anti-circulant, it follows that $v_2 \to \{v_4,v_5\}$. Since $v_1 \to v_3 \lto v_2 \to \{v_4,v_5\}$, we conclude that $\{v_4,v_5\} \to v_1$. Also, since $v_5 \to v_3 \lto v_2 \to v_4$, it follows that $v_4 \to v_5$. Now, since $v_2 \to v_5 \lto v_4 \to v_3$, it follows that $v_3 \to v_2$, and hence, $v_2 \digon v_3$, a contradiction because $v_2 \mapsto v_3$. Thus $ \vert N^-(v_3) \vert  \leq 3$. Moreover, note that if there exists $v_4 \in N^-(v_3)-\{v_1,v_2\}$, then $v_4 \to v_1$ and $v_2 \to v_4$. This ends the proof of Claim~\ref{prof2:claim1}.

\begin{claim} \label{prof2:claim2}
$\{v_1,v_2\} \cap S \neq \emptyset$.
\end{claim}

Suppose that $\{v_1,v_2\} \cap S = \emptyset$. First, suppose that there exists a vertex $v_4$ in $N^-(v_3) - \{v_1,v_2\}$. By Claim~\ref{prof2:claim1}, it follows that $N^-(v_3) = \{v_1,v_2,v_4\}$, $v_4 \to v_1$ and $v_2 \to v_4$. Let $D'=D-\{v_1,v_2\}$. Since $\{v_1,v_2\} \cap S = \emptyset$, $S$ is a maximum stable set in $D'$. By hypothesis, $D'$ is $\alpha$-diperfect. Let $\sP'$ be an $S$-path partition of $D'$. Let $P$ be a path in $\sP'$ such that $v_3 \in V(P)$. Since $N^-(v_3) = \{v_1,v_2,v_4\}$, it follows that $P$ starts at $v_3$ or $v_4v_3$ is an arc of $P$. If $P$ starts at $v_3$, then since $v_1 \to v_2$ and $v_2 \to v_3$, the collection $(\sP'-\{P\}) \cup \{v_1v_2P\}$ is an $S$-path partition of $D$ (note that if $N^-(v_3)=\{v_1,v_2\}$, then the result follows by previous argument). Thus $v_4v_3$ is an arc of $P$. Let $w_1$ and $w_p$ be the endvertices of $P$. Let $P_1=w_1Pv_4$ and $P_2=v_3Pw_p$ be the subpaths of $P$. Since $v_4 \to v_1$, $v_1 \to v_2$ and $v_2 \to v_3$, the collection $(\sP'-\{P\}) \cup \{P_1v_1v_2P_2\}$ is an $S$-path partition of $D$. So we may assume that $\{v_1,v_2\} \cap S \neq \emptyset$. This finishes the proof of Claim~\ref{prof2:claim2}. \newline

\begin{claim} \label{prof2:claim3}
$\{v_2,v_3\} \cap S \neq \emptyset$.
\end{claim}

By the \PDD, the result follows by Claim~\ref{prof2:claim2}. This ends the proof of Claim~\ref{prof2:claim3}. \newline

By Claims~\ref{prof2:claim2} and \ref{prof2:claim3}, it follows that $v_2 \in S$. First, suppose that there exists a vertex $v_4$ in $N^-(v_3)-\{v_1,v_2\}$. By Claim~\ref{prof2:claim1}, it follows that $N^-(v_3)=\{v_1,v_2,v_4\}$, $v_4 \to v_1$ and $v_2 \to v_4$. Let  $P=v_2v_4v_1$ and $u=v_3$. Since $(V(P)-v_2) \cap S = \emptyset$, $v_1 \to u$ and $N^-(u) \subseteq V(P)$, it follows by Lemma~\ref{arc-unique-N(u)-P-alpha} that $D$ admits an $S$-path partition. So we may assume that $N^-(v_3) = \{v_1,v_2\}$.

Now, suppose that $N^+(v_2)=\{v_3\}$. Since $v_3 \notin S$, the result follows by Lemma~\ref{3-anti-N(u)-S-alpha}\ref{3-anti-N(u)-S-alpha:ii}. So we may assume that there exists a vertex $w$ in $N^+(v_2)-\{v_1,v_3\}$. Since $v_1 \to v_3 \lto v_2 \to w$, we conclude that $w \to v_1$. Let $P=v_2wv_1$ and let $u=v_3$. Since $(V(P)-v_2) \cap S = \emptyset$, $v_1 \to u$ and $N^-(u) \subset V(P)$, the result follows by Lemma~\ref{arc-unique-N(u)-P-alpha}. This finishes the proof.
\end{proof}

We show next that if $D$ contains no induced transitive triangle, then $D$ satisfies the $\alpha$-property.

\begin{lemma}
\label{3-anti-circ-noTT-berge}
Let $D$ be a $3$-anti-circulant digraph such that every proper induced subdigraph of $D$ satisfies the $\alpha$-property. If $D$ contains no induced transitive triangle, then $D$ satisfies the $\alpha$-property.  
\end{lemma}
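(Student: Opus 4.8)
The plan is to argue by contradiction, taking $D$ to be a minimum counterexample, so that every proper induced subdigraph of $D$ satisfies the $\alpha$-property but $D$ itself does not. Fix a maximum stable set $S$ of $D$. Since $D$ contains no induced transitive triangle, and $D$ is already known to belong to $\sB$, the strategy mirrors the BE-case: I would analyse the sets $B^+$, $B^-$ and $B^{\pm}$ relative to $S$ and try to show that $B^+\cup B^-\cup B^{\pm}$ is stable, which by Lemma~\ref{stable_set_S_menor_vizinhanca} (applied with the stable set $S$, noting $\vert N(S)\vert\le\vert B^+\cup B^-\cup B^{\pm}\vert\le\vert S\vert$) would force $D$ to satisfy the $\alpha$-property, a contradiction. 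The absence of induced transitive triangles is what replaces, and in fact strengthens, the hypothesis $D\in\D$ used throughout the BE-subsection: whenever three vertices induce a subdigraph whose underlying graph is a triangle, the digraph on them contains a digon, exactly as in the $\D$ case — but here there is no transitive triangle to worry about at all, so the "$V(T)\cap S\neq\emptyset$" case handled by Lemma~\ref{3-anti-circ-has-TT-BE} simply does not arise, and we are in the cleaner situation throughout.

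The key steps, in order, would be: (1) re-establish the analogues of Lemmas~\ref{3-anti-circ-B-+RB+}, \ref{3-anti-circ-B-+RB+-dual}, \ref{3-anti-circ-B+-stable}, \ref{3-anti-circ-antiP-noS}, \ref{3-anti-circ-N-arc-B+B+-}, \ref{3-anti-circ-N-arc-B+B-} and \ref{3-anti-circ-N-arc-B+B-stable} with "$S_{BE}$-path partition" replaced by "$S$-path partition" and the BE-lemmas replaced by their $\alpha$-counterparts (Lemma~\ref{3-anti-N(u)-S-alpha}, Lemma~\ref{arc-unique-N(u)-P-alpha} and its dual). Most of these arguments are purely structural — they deduce arcs from the $3$-anti-circulant condition and derive contradictions with the definitions of $B^+,B^-,B^{\pm}$ — and do not actually use the BE-refinement of the path partition, so they should transfer verbatim; the only places needing care are the two final "surgery" steps (the analogues of Subcases~1 and 2 inside the proof of Lemma~\ref{3-anti-circ-N-arc-B+B+-}), where one splices a new path into a given path partition and must check the weaker orthogonality condition still holds. (2) With $B^+$, $B^-$, $B^{\pm}$ all shown stable and no arcs between distinct ones of these three sets, conclude $\vert S\vert\ge\vert B^+\cup B^-\cup B^{\pm}\vert=\vert N(S)\vert$ and invoke Lemma~\ref{stable_set_S_menor_vizinhanca}.

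The main obstacle I expect is step~(1) for the anti-$P_4$ lemma (the $\alpha$-version of Lemma~\ref{3-anti-circ-antiP-noS}): its proof invokes Lemma~\ref{arc-unique-N(u)-P-BE} on a path $P$ with $V(P)\cap S=\emptyset$, and in the $\alpha$-setting one would instead use Lemma~\ref{arc-unique-N(u)-P-alpha}, whose hypothesis is slightly different — it requires $(V(P)-\{v_1\})\cap S=\emptyset$ rather than $V(P)\cap S=\emptyset$, and it needs $v_k\to u$ with $N^-(u)\subseteq V(P)$. One must check that the paths $P_1=v_4v_1v_2$ and $P_2=v_3v_4v_1$ produced there, together with the vertices $v_2$ or $v_3$, still fit the $\alpha$-lemma's template; since these paths are disjoint from $S$ the hypothesis is actually satisfied a fortiori, so the translation should go through, but this is the spot where the two frameworks diverge and where I would be most careful. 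A secondary subtlety is that without the $\D$ hypothesis one cannot directly quote Lemma~\ref{3-anti-circ-has-TT-BE}; instead, every appeal to "$D[\{a,b,c\}]$ contains a digon because otherwise it is an induced transitive triangle" is now simply "because $D$ has no induced transitive triangle", which is actually the hypothesis of this very lemma, so no extra work is needed — but the bookkeeping of which earlier lemma each sub-argument replaces must be done carefully.
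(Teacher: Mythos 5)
You have missed the observation that makes this lemma a two-line corollary of results already proved. Since $D$ is $3$-anti-circulant, every blocking odd cycle of length at least five contains an induced anti-$P_4$, so the only induced blocking odd cycle $D$ could possibly contain is a transitive triangle; the hypothesis that $D$ has no induced transitive triangle therefore puts $D$ in $\D$. Theorem~\ref{3-anti-circ-be} then says $D$ is BE-diperfect, hence satisfies the BE-property, and the BE-property implies the $\alpha$-property. That is the paper's entire proof. Your plan instead proposes to rebuild the whole BE machinery of the preceding subsection with ``$S_{BE}$-path partition'' replaced by ``$S$-path partition'', which is both unnecessary and, as written, not sound.

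The concrete gap is your assertion that the structural lemmas ``do not actually use the BE-refinement of the path partition, so they should transfer verbatim.'' Several of them use it essentially. In the proof of Lemma~\ref{3-anti-circ-digon-C3}, after deleting $\{v_1,v_2\}$ one takes the path $P$ of the partition containing $v_3\in S$ and assumes (up to directional duality) that $P$ \emph{starts} at $v_3$; this is legitimate only because an $S_{BE}$-path partition forces $v_3$ to be an endvertex of $P$. With a mere $S$-path partition, $v_3$ may lie in the interior of $P$ and the induction that walks along $P=v_3w_1\dots w_k$ collapses. Since Lemma~\ref{3-anti-circ-digon-C3} underpins Lemmas~\ref{3-anti-circ-digon-TT}, \ref{3-anti-circ-has-TT-BE}, \ref{3-anti-circ-B-+RB+} and everything downstream, your step~(1) does not go through as claimed; you would need genuinely new arguments there (indeed, the paper handles the transitive-triangle case for the $\alpha$-property by a separate argument, Lemma~\ref{3-anti-circ-TT-berge}, rather than by transfer). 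The correct move for the present lemma is simply to invoke Theorem~\ref{3-anti-circ-be} and the implication from the BE-property to the $\alpha$-property.
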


\begin{proof}
Since every blocking odd cycle of length at least five contains an induced anti-$P_4$ and $D$ is $3$-anti-circulant, it follows that $D$ contains no blocking odd cycle of length at least five. Moreover, $D$ contains no induced transitive triangle, and this implies that $D$ belongs to $\D$. So by Theorem~\ref{3-anti-circ-be} $D$ satisfies the BE-property, and hence, the $\alpha$-property.
\end{proof}

Now, we prove the main result of this subsection.

\begin{theorem}
\label{3-anti-circ-berge}
Let $D$ be a $3$-anti-circulant digraph. Then, $D$ is $\alpha$-diperfect.  
\end{theorem}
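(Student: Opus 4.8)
The plan is to prove the statement by induction on $|V(D)|$, exactly mirroring the structure already used for Theorem~\ref{3-anti-circ-be}. Since $\alpha$-diperfectness requires every induced subdigraph to satisfy the $\alpha$-property, and since every induced subdigraph of a $3$-anti-circulant digraph is again $3$-anti-circulant (an induced subdigraph cannot create a new anti-$P_4$, and it inherits the closing arc $v_4 \to v_1$ whenever all four vertices are retained), it suffices to show that \emph{every} $3$-anti-circulant digraph satisfies the $\alpha$-property. So I would take a counterexample $D$ with $|V(D)|$ minimum; by minimality every proper induced subdigraph of $D$ — being $3$-anti-circulant — satisfies the $\alpha$-property, which is precisely the hypothesis needed to invoke the two preceding lemmas.

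Next I would split into the two cases according to whether $D$ contains an induced transitive triangle. If $D$ does contain an induced transitive triangle, then Lemma~\ref{3-anti-circ-TT-berge} immediately gives that $D$ satisfies the $\alpha$-property, contradicting the choice of $D$. If $D$ contains no induced transitive triangle, then Lemma~\ref{3-anti-circ-noTT-berge} gives the same conclusion: indeed, in that case $D$ has no blocking odd cycle (the only possible induced blocking odd cycle in a $3$-anti-circulant digraph is a transitive triangle, and longer ones would contain an induced anti-$P_4$, which is impossible here), so $D \in \D$, whence $D$ is BE-diperfect by Theorem~\ref{3-anti-circ-be} and therefore satisfies the $\alpha$-property. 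Either way we reach a contradiction, so no counterexample exists.

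There is essentially no remaining obstacle at this level: all the real work has been front-loaded into Lemma~\ref{3-anti-circ-TT-berge} (the induced-transitive-triangle case, handled via the reduction lemmas of Section~\ref{struc-results} together with the $3$-anti-circulant closure property) and into Theorem~\ref{3-anti-circ-be} (which subsumes the triangle-free case through the $B^+,B^-,B^{\pm}$ analysis and Lemma~\ref{stable_set_S_menor_vizinhanca}). The only point requiring a moment's care is the observation that induced subdigraphs preserve the $3$-anti-circulant property, so that the inductive hypothesis applies uniformly; after that, the theorem follows by a two-line dichotomy. I would therefore present the proof as: reduce to showing the $\alpha$-property for a fixed $3$-anti-circulant $D$ all of whose proper induced subdigraphs satisfy the $\alpha$-property, then apply Lemma~\ref{3-anti-circ-TT-berge} or Lemma~\ref{3-anti-circ-noTT-berge} depending on the presence of an induced transitive triangle.
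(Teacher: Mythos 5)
Your proposal is correct and follows essentially the same route as the paper: reduce to showing the $\alpha$-property for a minimal (counterexample) $3$-anti-circulant digraph, whose proper induced subdigraphs then satisfy the hypothesis of the two lemmas, and split on the presence of an induced transitive triangle, invoking Lemma~\ref{3-anti-circ-TT-berge} or Lemma~\ref{3-anti-circ-noTT-berge} accordingly. The paper phrases the induction slightly more implicitly, but the argument is the same.
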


\begin{proof}
Since every induced subdigraph of $D$ is also a $3$-anti-circulant digraph, it suffices to show that $D$ satisfies the $\alpha$-property. If $D$ contains an induced transitive triangle, then the result follows by Lemma~\ref{3-anti-circ-TT-berge}. Thus $D$ contains no induced transitive triangle, and hence, the result follows by Lemma~\ref{3-anti-circ-noTT-berge}. This ends the proof.   
\end{proof}

\section{Concluding remarks}
\label{conclu}

In this paper, we presented two conjectures related to maximum stable set and path partition in digraphs. We verified both Conjectures~\ref{conj_berge} and \ref{conj_be} for $3$-anti-circulant digraphs. These digraphs do not contain anti-$P_4 $ as an induced subdigraph. We believe that study the structure these digraphs should help towards obtaining a proof of both conjectures in the general case.

Furthermore, an interesting and natural continuation in study of the structure of these digraphs is to analyze digraphs which for every anti-$P_4$ $v_1 \to v_2 \lto v_3 \to v_4$, it follows that $v_1$ and $v_4$ are adjacent. Here, we believe this could be a challenging problem.


\bibliography{ref}


\end{document}